\numberwithin{equation}{section}
\theoremstyle{theorem}
\newtheorem{theorem}{Theorem}
\newtheorem{lemma}{Lemma}
\newtheorem{prop}{Proposition}
\newtheorem{cor}{Corollary}
\theoremstyle{definition}
\newtheorem{definition}{Definition}
\newtheorem{remark}{Remark}
\numberwithin{equation}{section}
\begin{document}


\title{Tensor algebras and Decorated representations}


\author{Raymundo Bautista and Daniel L\'{o}pez-Aguayo}
 \address{Centro de Ciencias Matem\'{a}ticas, Universidad Nacional Aut\'{o}noma de M\'{e}xico, Apartado Postal 61-3 (Xangari), Morelia, Michoac\'{a}n, Mexico 58089}
 \thanks{Supported by CONACyT Ph.D scholarship no. 358378}
 \email{raymundo@matmor.unam.mx}
\email{dlopez@matmor.unam.mx}

\maketitle

\begin{abstract} In \cite{2} we gave a generalization of the theory of quivers with potentials introduced by Derksen-Weyman-Zelevinsky, via completed tensor algebras over $S$-bimodules where $S$ is a finite dimensional basic semisimple algebra. In this paper we show how to extend this construction to the level of decorated representations and we prove that mutation of decorated representations is an involution. Moreover, we prove that there exists a nearly Morita equivalence between the Jacobian algebras which are related via mutation. This generalizes the construction given by Buan-Iyama-Reiten-Smith in \cite{4}.
\end{abstract}

\tableofcontents

\section{Introduction}

In this paper we extend the construction given in \cite{2} to the level of decorated representations of algebras with potentials realized via completed tensor algebras. Instead of working with a quiver, we consider the algebra of formal power series $\mathcal{F}_{S}(M)$ where $S=\displaystyle \prod_{i=1}^{n} D_{i}$ is a finite direct product of division rings containing the base field $F$ in its center and $M$ is a finite dimensional $S$-bimodule. In section \ref{sec3} we define a decorated representation of an algebra with potential $(\mathcal{F}_{S}(M),P)$ and in section \ref{sec4} we show how to associate a decorated representation to the premutated algebra with potential $(\mathcal{F}_{S}(\mu_{k}M),\mu_{k}P)$ and we prove this is indeed a decorated representation. In contrast to \cite{8} we do not assume that the basis is semi-multiplicative, but rather impose some conditions on the dual basis associated to the division algebra $Se_{k}$. In section \ref{sec5} we define mutation of a decorated representation, and we show it is a well-defined transformation on the set of right-equivalence classes  of decorated representations of algebras with potentials. A crucial result of this section is that mutation of decorated representations is an involution. Finally, in section \ref{sec6} we construct a functor to prove that there exists an Nearly Morita equivalence between the Jacobian algebras which are related via mutation.This construction gives a generalization of the one given in \cite{4}. 

\section{Preliminaries}

\begin{definition} Let $F$ be a field and let $D_{1},\hdots,D_{n}$ be division rings, containing $F$ in its center, and each of them is finite-dimensional over $F$. Let  $S=\displaystyle \prod_{i=1}^{n} D_{i}$ and let $M$ be an $S$-bimodule of finite dimension over $F$. Define the \emph{algebra of formal power series} over $M$ as the set: \\

\begin{center}
$\mathcal{F}_{S}(M):=\left\{\displaystyle \sum_{i=0}^{\infty} a(i): a(i) \in M^{\otimes i}\right\}$
\end{center}
\end{definition} 
where $M^{0}=S$. 

Note that $\mathcal{F}_{S}(M)$ is an associative unital $F$-algebra where the product is the one obtained by extending the product of the tensor algebra $T_{S}(M)=\displaystyle \bigoplus_{i=0}^{\infty} M^{\otimes i}$ to $\mathcal{F}_{S}(M)$.

Let $\{e_{1},\ldots,e_{n}\}$ be a complete set of primitive orthogonal idempotents of $S$.

\begin{definition} An element $m \in M$ is \emph{legible} if $m=e_{i}me_{j}$ for some idempotents $e_{i},e_{j}$ of $S$.
\end{definition}

\begin{definition} Let $Z=\displaystyle \sum_{i=1}^{n} Fe_{i} \subseteq S$. We say that $M$ is $Z$-\emph{freely generated} by a $Z$-subbimodule $M_{0}$ of $M$ if the map $\mu_{M}: S \otimes_{Z} M_{0} \otimes_{Z} S \rightarrow M$ given by $\mu(s_{1} \otimes m \otimes s_{2})=s_{1}ms_{2}$ is an isomorphism of $S$-bimodules. In this case we say that $M$ is an $S$-bimodule which is $Z$-\emph{free} or $Z$-freely generated.
\end{definition}

\begin{definition} Let $\mathcal{C}$ be a non-empty subset of $M$. We say that $\mathcal{C}$ is a \emph{right $S$-local basis} of $M$ if every element of $\mathcal{C}$ is legible and if for each pair of idempotents $e_{i},e_{j}$ of $S$ we have that $\mathcal{C} \cap e_{i}Me_{j}$ is a $Se_{j}=D_{j}$-basis of $e_{i}Me_{j}$.
\end{definition}

\begin{definition} Let $\mathcal{D}$ be a non-empty subset of $M$. We say that $\mathcal{D}$ is a \emph{left $S$-local basis} of $M$ if every element of $\mathcal{D}$ is legible and if for each pair of idempotents $e_{i},e_{j}$ of $S$ we have that $\mathcal{D} \cap e_{i}Me_{j}$ is a $Se_{i}=D_{i}$-basis of $e_{i}Me_{j}$.
\end{definition}

A right $S$-local basis $\mathcal{C}$ induces a dual basis $\{u,u^{\ast}\}_{u \in \mathcal{C}}$ of $M$ where $u^{\ast}: M_{S} \rightarrow S_{S}$ is the morphism of right $S$-modules defined by $u^{\ast}(v)=0$ if $v \in \mathcal{C} \setminus \{u\}$ and $u^{\ast}(u)=e_{j}$ if $u=e_{i}ue_{j}$. Similarly, a left $S$-local basis $\mathcal{D}$ of $M$ induces a dual basis $\{v,^{\ast}v\}_{v \in \mathcal{D}}$ where $^{\ast} v: \ _{S}{M} \rightarrow _{S}{S}$ is the morphism of left $S$-modules defined by $^{\ast}v(u)=0$ if $u \in \mathcal{D} \setminus \{v\}$ and $^{\ast}v(v)=e_{i}$ if $v=e_{i}ve_{j}$. \\

Let $L$ be a $Z$-local basis of $S$ and let $T$ be a $Z$-local basis of $M_{0}$. \\

Throughout this paper we will use the following notation $T_{k}= T \cap Me_{k}$ and $_{k}T= T \cap e_{k}M$. \\

We will also assume that for every integer $i$ in $[1,n]$ and for each $F$-basis $L(i)$ of $D_{i}$, the following equalities hold for each $f,w,z \in L(i)$:

\begin{equation} \label{2.1}
f^{\ast}(w^{-1}z)=w^{\ast}(zf^{-1})
\end{equation}
\begin{equation} \label{2.2}
f^{\ast}(zw)=(w^{-1})^{\ast}(f^{-1}z)
\end{equation}
\begin{equation}\label{2.3}
z^{\ast}(wf)=(w^{-1})^{\ast}(fz^{-1})
\end{equation}

\vspace{0.1in}

Note that \ref{2.1} readily implies (1) of \cite[p.29]{2} by taking $w=e_{i}$, $z=s$ and $f=t$. \\
Observe that in \ref{2.1} and \ref{2.2} one can replace $z \in L(i)$ by $z \in D_{i}$, because both expressions are linear in $z$. Similarly, one can replace in \ref{2.3} $f \in L(i)$ by $f \in D_{i}$.

\begin{remark}
If $L(i)$ is a semi-multiplicative basis of $D(i)$ then $L(i)$ satisfies \ref{2.1}, \ref{2.2} and \ref{2.3}. 
\end{remark}
\begin{proof}
Indeed, suppose that $f^{\ast}(w^{-1}z) \neq 0$ then $w^{-1}z=cf$ for some uniquely determined $c \in F^{\ast}$; thus $f^{\ast}(w^{-1}c)=c$. On the other hand, $w^{\ast}(zf^{-1})=w^{\ast}(z(z^{-1}wc))=w^{\ast}(wc)=c$ and the equality follows. A similar argument shows that \ref{2.2} and \ref{2.3} also hold. 
\end{proof}

\begin{remark}
Suppose that $L_{1}$ is an $F$-basis for the field extension $F_{1}/F$ and $L_{2}$ is an $F_{1}$-basis for the field extension $F_{2}/F_{1}$. If $L_{1}$ and $L_{2}$ satisfy \ref{2.1}, \ref{2.2} and \ref{2.3} then the $F$-basis $\{xy: x \in L_{1},y \in L_{2}\}$ of $F_{2}/F$ also satisfies \ref{2.1}, \ref{2.2} and \ref{2.3}.
\end{remark}

\begin{proof} Suppose that both $L_{1}$ and $L_{2}$ satisfy \ref{2.1}. Let $f=f_{1}f_{2}$, $w=w_{1}w_{2}$, $z=z_{1}z_{2}$ where $f_{1},w_{1},z_{1} \in L_{1}$ and $f_{2},w_{2},z_{2} \in L_{2}$. Then:
\begin{align*}
f^{\ast}(w^{-1}z)&=f_{1}^{\ast}f_{2}^{\ast}(w_{1}^{-1}w_{2}^{-1}z_{2}z_{1}) \\
&=f_{1}^{\ast}(w_{1}^{-1}z_{1}f_{2}^{\ast}(w_{2}^{-1}z_{2}))\\
&=f_{1}^{\ast}(w_{1}^{-1}z_{1}w_{2}^{\ast}(z_{2}f_{2}^{-1}))\\
&=w_{1}^{\ast}(z_{1}f_{1}^{-1}w_{2}^{\ast}(z_{2}f_{2}^{-1}))\\
&=(w_{1}w_{2})^{\ast}(z_{1}z_{2}(f_{1}f_{2})^{-1}) \\
&=w^{\ast}(zf^{-1})
\end{align*}
as claimed.The equalities \ref{2.2} and \ref{2.3} are established in an analogous way.
\end{proof}

\begin{remark} \label{rem3} If the basis $L(i)$ satisfies \ref{2.1}, then for each $f,z \in L(i)$ we have: \\
\begin{center}
$\displaystyle \sum_{w \in L(i)} f^{\ast}(w^{-1}z)w=zf^{-1}$
\end{center}
\end{remark}

\begin{proof}
\begin{center}
$\displaystyle \sum_{w \in L(i)} f^{\ast}(w^{-1}z)w=\displaystyle \sum_{w \in L(i)} w^{\ast}(zf^{-1})w=zf^{-1}.$
\end{center}
\end{proof}

\begin{remark} If the basis $L(i)$ satisfies \ref{2.2}, then for each $r,v \in L(i)$ we have: \\
\begin{center}
\begin{equation} \label{2.4}
\displaystyle \sum_{t \in L(i)} r^{\ast}(vt)t^{-1}=r^{-1}v
\end{equation}
\end{center}
\end{remark}

\begin{proof}
\begin{center}
$\displaystyle \sum_{t \in L(i)} r^{\ast}(vt)t^{-1}=\displaystyle \sum_{t \in L(i)} (t^{-1})^{\ast}(r^{-1}v)t^{-1}=r^{-1}v.$
\end{center}
\end{proof}

\begin{definition} Given an $S$-bimodule $N$ we define the \emph{cyclic part} of $N$ as $N_{cyc}:=\displaystyle \sum_{j=1}^{n} e_{j}Ne_{j}$.
\end{definition}

\begin{definition} A \emph{potential}  $P$ is an element of $\mathcal{F}_{S}(M)_{cyc}$. \\
\end{definition}

For each legible element $a$ of $e_{i}Me_{j}$, we let $\sigma(a)=i$ and $\tau(a)=j$. Recall that each $L(i)=L \cap e_{i}S$ is an $F$-basis of $D_{i}$. We will assume that each basis $L(i)$ satisfies that  $\operatorname{char}(F) \nmid \operatorname{card}L(i).$

\begin{definition} Let $P$ be a potential in $\mathcal{F}_{S}(M)$, then $R(P)$ is the closure of the two-sided ideal of $\mathcal{F}_{S}(M)$ generated by all the elements $X_{a^{\ast}}(P):=\displaystyle \sum_{s \in L(\sigma(a))} \delta_{(sa)^{\ast}}(P)s$ where $a \in T$. 
\end{definition}

In \cite[p.19]{2} it is shown that the Jacobian ideal (in the sense of \cite{6}) contains properly the ideal $R(P)$.

Let $k$ be an integer in $[1,n]$. Using the $S$-bimodule $M$, we define a new $S$-bimodule $\mu_{k}M=\widetilde{M}$ as:
\vspace{0.1in}
\begin{center}
$\widetilde{M}:=\bar{e_{k}}M\bar{e_{k}} \oplus Me_{k}M \oplus (e_{k}M)^{\ast} \oplus ^{\ast}(Me_{k})$
\end{center}

\vspace{0.1in}

where $\bar{e_{k}}=1-e_{k}$, $(e_{k}M)^{\ast}=\operatorname{Hom}_{S}((e_{k}M)_{S},S_{S})$ and $^{\ast}(Me_{k})=\operatorname{Hom}_{S}(_{S}(Me_{k}),_{S}S)$. One can show (see \cite[Lemma 8.7]{2}) that $\mu_{k}M$ is $Z$-freely generated by the following $Z$-subbimodule: \\
\begin{center}
$\bar{e_{k}}M_{0}\bar{e_{k}} \oplus M_{0}e_{k}Se_{k}M_{0} \oplus e_{k}(_{0}N) \oplus N_{0}e_{k}$
\end{center}

\vspace{0.1in}

where $N_{0}=\{h \in M^{\ast} | h(M_{0}) \in Z, h(tM_{0})=0, t \in L^{'} \}$, $_{0}N=\{h \in ^{\ast}M | h(M_{0}) \in Z, h(M_{0}t)=0, t \in L^{'} \}$ and $L'=L \setminus \{e_{1},\hdots,e_{n}\}$.

 \begin{definition} An algebra with potential is a pair $(\mathcal{F}_{S}(M),P)$ where $P$ is a potential in $\mathcal{F}_{S}(M)$ and $M_{cyc}=0$.
\end{definition}

Throughout this paper we will assume that $M$ is $Z$-freely generated by $M_{0}$.

\section{Decorated representations} \label{sec3}

\begin{definition} Let $(\mathcal{F}_{S}(M),P)$ be an algebra with potential. A decorated representation of $(\mathcal{F}_{S}(M),P)$ is a pair $\mathcal{N}=(N,V)$ where $N$ is a finite dimensional left $\mathcal{F}_{S}(M)$-module annihilated by $R(P)$ and $V$ is a finite dimensional left $S$-module.
\end{definition}

Equivalently, $N$ is a finite dimensional left module over the quotient algebra $\mathcal{F}_{S}(M)/R(P)$. For $u \in \mathcal{F}_{S}(M)$ we let $u_{N}=u: N \rightarrow N$ denote the multiplication operator $u(n)=un$. 

Let $(\mathcal{F}_{S}(M),P)$ and $(\mathcal{F}_{S}(M'),P')$ be algebras with potential. Let $\mathcal{N}=(N,V)$ and $\mathcal{N'}=(N',V')$ be decorated representations of $(\mathcal{F}_{S}(M),P)$ and $(\mathcal{F}_{S}(M'),P')$, respectively. A \emph{right-equivalence} between $\mathcal{N}$ and $\mathcal{N'}$ is a triple $(\varphi,\psi,\eta)$ where:

\begin{itemize}
\item $\varphi: \mathcal{F}_{S}(M) \rightarrow \mathcal{F}_{S}(M')$ is a right-equivalence between $(\mathcal{F}_{S}(M),P)$ and $(\mathcal{F}_{S}(M'),P')$. 
\item $\psi: N \rightarrow N'$ is an isomorphism of $F$-vector spaces such that $\psi \circ u_{N}=\varphi(u)_{N'} \circ \psi$ for each $u \in \mathcal{F}_{S}(M)$.
\item $\eta: V \rightarrow V'$ is an isomorphism of left $S$-modules.
\end{itemize}

\begin{remark} Suppose that $M^{\otimes n}=0$ for $n \gg 0$ and that $\mathcal{F}_{S}(M)_{cyc}=\{0\}$, then a decorated representation can be identified with a left module over the tensor algebra $T_{S}(M)$. In the case that the underlying semisimple algebra $S$ happens to be a finite direct product of copies of the base field $F$, then $T_{S}(M)$ can be identified with a path algebra, so in this case a decorated representation is a representation of a quiver in the classical sense.
\end{remark}

Let $M_{1}$, $M_{2}$ be $Z$-freely generated $S$-bimodules and let $T_{1}$ and $T_{2}$ be $Z$-free generating sets of $M_{1}$ and $M_{2}$, respectively. Let $P_{1}$ and $P_{2}$ be potentials in $\mathcal{F}_{S}(M_{1})$ and $\mathcal{F}_{S}(M_{2})$ respectively, and consider the potential $P_{1}+P_{2} \in \mathcal{F}_{S}(M_{1} \oplus M_{2})$. Let $\mathcal{N}=(N,V)$ be a decorated representation of the algebra with potential $(\mathcal{F}_{S}(M_{1} \oplus M_{2}),P_{1}+P_{2})$. We have an injective morphism of topological algebras $\mathcal{F}_{S}(M_{1}) \hookrightarrow \mathcal{F}_{S}(M_{1} \oplus M_{2})$ and thus, by restriction of scalars, $N$ is a left $\mathcal{F}_{S}(M_{1})$-module. We will denote this module by $N|_{\mathcal{F}_{S}(M_{1})}$. Let us show that $R(P_{1})$ annihilates $N|_{\mathcal{F}_{S}(M_{1})}$. Let $a \in T_{1}$, then $X_{a^{\ast}}(P_{2})=\displaystyle \sum_{s \in L(\sigma(a))} \delta_{(sa)^{\ast}}(P_{2})s=0$. Thus $X_{a^{\ast}}(P_{1}+P_{2})=X_{a^{\ast}}(P_{1}) \in R(P_{1}+P_{2})$. It follows that $\mathcal{N}|_{\mathcal{F}_{S}(M_{1})}:=(N|_{\mathcal{F}_{S}(M_{1})},V)$ is a decorated representation of the algebra with potential $(\mathcal{F}_{S}(M_{1}),P_{1})$.

\begin{prop} Let $M_{1}$ and $M_{2}$ be $Z$-freely generated $S$-bimodules and let $P,P'$ be reduced potentials in $\mathcal{F}_{S}(M_{1})$ and $W$ be a trivial potential in $\mathcal{F}_{S}(M_{2})$. Let $\mathcal{N}$ and $\mathcal{N'}$ be decorated representations of $\mathcal{F}_{S}(M_{1} \oplus M_{2})$ with respect the potentials $P+W$ and $P'+W$. If $\mathcal{N}$ is right-equivalent to $\mathcal{N'}$ then $\mathcal{N}|_{\mathcal{F}_{S}(M_{1})}$ is right-equivalent to $\mathcal{N'}|_{\mathcal{F}_{S}(M_{1})}$.
\end{prop}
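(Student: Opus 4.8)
The plan is to produce the desired right-equivalence explicitly as $(\bar\varphi,\psi,\eta)$, where $\bar\varphi:=\pi\circ\varphi\circ\iota$ with $\iota\colon\mathcal{F}_{S}(M_{1})\hookrightarrow\mathcal{F}_{S}(M_{1}\oplus M_{2})$ the inclusion and $\pi\colon\mathcal{F}_{S}(M_{1}\oplus M_{2})\to\mathcal{F}_{S}(M_{1})$ the continuous $S$-algebra epimorphism induced by the projection $M_{1}\oplus M_{2}\to M_{1}$. First I would record that, since $W$ is trivial, the elements $X_{b^{\ast}}(P+W)=X_{b^{\ast}}(W)$ for $b\in T_{2}$ are homogeneous of degree $1$ and $S$-bispan $M_{2}$; hence $M_{2}\subseteq R(P+W)$, so $M_{2}$ annihilates $N$, and likewise $M_{2}$ annihilates $N'$. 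Next, comparing the degree-$2$ parts of $\varphi(P+W)\sim_{\mathrm{cyc}}P'+W$: since $P,P'$ are reduced this reads $\sum_{b,b'}c_{b,b'}[\varphi(b)]_{1}[\varphi(b')]_{1}\sim_{\mathrm{cyc}}W$, where $W=\sum c_{b,b'}\,b\otimes b'$; decomposing $[\varphi(b)]_{1}$ into its $M_{1}$- and $M_{2}$-components and using that the symmetrized Hessian of the trivial potential $W$ is nondegenerate, one deduces that the $M_{2}\to M_{1}$ block of the linear part of $\varphi$ vanishes and that both diagonal blocks are invertible. So the linear part of $\bar\varphi$ is the invertible $M_{1}\to M_{1}$ block, and $\bar\varphi$ is an automorphism of $\mathcal{F}_{S}(M_{1})$. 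Moreover, for $u\in\mathcal{F}_{S}(M_{1})$ the element $\varphi(u)-\bar\varphi(u)$ lies in the closed two-sided ideal generated by $M_{2}$, which kills $N'$; hence $\psi\circ u_{N}=\varphi(u)_{N'}\circ\psi=\bar\varphi(u)_{N'}\circ\psi$, which is exactly the intertwining condition for $(\bar\varphi,\psi,\eta)$ to be a right-equivalence $\mathcal{N}|_{\mathcal{F}_{S}(M_{1})}\to\mathcal{N}'|_{\mathcal{F}_{S}(M_{1})}$ — provided $\bar\varphi$ is a right-equivalence of algebras with potential, and using that $\eta\colon V\to V'$ is still an isomorphism of $S$-modules.

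So it remains to check that $\bar\varphi(P)\sim_{\mathrm{cyc}}P'$. Applying $\pi$ to $\varphi(P+W)\sim_{\mathrm{cyc}}P'+W$ gives $\bar\varphi(P)+\pi(\varphi(W))\sim_{\mathrm{cyc}}P'$, so one must show $\pi(\varphi(W))\sim_{\mathrm{cyc}}0$. I would obtain this by normalizing $\varphi$: running the degree-by-degree construction from the proof of the Splitting Theorem of \cite{2} (which converges by completeness of $\mathcal{F}_{S}$ and exploits the nondegeneracy of $W$), one may replace $\varphi$ by $\rho'\circ\varphi\circ\rho$, with $\rho,\rho'$ automorphisms which are right-equivalences of $(\mathcal{F}_{S}(M_{1}\oplus M_{2}),P+W)$, resp.\ $(\mathcal{F}_{S}(M_{1}\oplus M_{2}),P'+W)$, with themselves, so that the result is \emph{split}: $\varphi(\mathcal{F}_{S}(M_{i}))\subseteq\mathcal{F}_{S}(M_{i})$ for $i=1,2$. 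A right-equivalence of algebras with potential preserves $R(-)$, hence lifts canonically to a right-equivalence of decorated representations (with the identity as vector-space component), so composing these lifts with $(\varphi,\psi,\eta)$ replaces it by a right-equivalence $(\rho'\varphi\rho,\psi_{0},\eta)$; and, $\rho,\rho'$ being chosen to respect $M_{1}\oplus M_{2}$, the restrictions to $\mathcal{F}_{S}(M_{1})$ of the new decorated representations coincide with $\mathcal{N}|_{\mathcal{F}_{S}(M_{1})}$ and $\mathcal{N}'|_{\mathcal{F}_{S}(M_{1})}$ up to right-equivalence. Thus we may assume $\varphi$ split, in which case $\varphi(W)\in\mathcal{F}_{S}(M_{2})=\ker\pi$, so $\pi(\varphi(W))=0$ and $\bar\varphi(P)\sim_{\mathrm{cyc}}P'$.

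The main obstacle is precisely this normalization step: one must verify that the automorphisms produced by the Splitting-Theorem argument can be taken to be genuine right-equivalences of the split potentials (so that they lift to decorated representations) while being compatible enough with the decomposition $M_{1}\oplus M_{2}$ that passing to $\rho'\varphi\rho$ does not change the restricted decorated representations up to right-equivalence — equivalently, that the isomorphism $\mathcal{F}_{S}(M_{1})/R(P)\to\mathcal{F}_{S}(M_{1})/R(P')$ induced by $\varphi$ is realized by a bona fide right-equivalence of $(\mathcal{F}_{S}(M_{1}),P)$ with $(\mathcal{F}_{S}(M_{1}),P')$. The rest of the argument above is formal bookkeeping.
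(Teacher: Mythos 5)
Your skeleton matches the paper's: you take $\bar\varphi=\pi\circ\varphi\circ\iota$ (the paper's $\rho=p\circ\phi_{|\mathcal{F}_{S}(M_{1})}$), observe that the closed ideal $L$ generated by $M_{2}$ is contained in $R(P+W)$ and $R(P'+W)$ and hence kills $N$ and $N'$, and deduce the intertwining $\psi\circ u_{N}=\bar\varphi(u)_{N'}\circ\psi$ from $\varphi(u)-\bar\varphi(u)\in L$. All of that is correct. The genuine gap is exactly where you place it: you still must produce an automorphism of $\mathcal{F}_{S}(M_{1})$ that is simultaneously a right-equivalence $(\mathcal{F}_{S}(M_{1}),P)\to(\mathcal{F}_{S}(M_{1}),P')$ and compatible with $\psi$. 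Your proposed fix — normalize $\varphi$ to $\rho'\varphi\rho$ respecting the splitting $M_{1}\oplus M_{2}$ and then conclude $\pi(\varphi(W))=0$ — is both incomplete and dangerously close to circular: after twisting by $\rho$ and $\rho'$ the decorated representations are replaced by twisted modules, and your claim that their restrictions to $\mathcal{F}_{S}(M_{1})$ are right-equivalent to the original restrictions is an instance of the very proposition being proved. Nothing in your argument rules out that $\pi(\varphi(W))$ contributes a nonzero non-cyclically-trivial term to $\bar\varphi(P)$.

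The missing idea (the paper's route, via Propositions 6.5 and 6.6 of \cite{2}) is that one should \emph{not} try to force $\bar\varphi(P)\sim_{\mathrm{cyc}}P'$ on the nose. Proposition 6.6 already gives that $P'-\bar\varphi(P)$ is cyclically equivalent to an element of $R(\bar\varphi(P))^{2}$, and Proposition 6.5 then supplies an automorphism $\lambda$ of $\mathcal{F}_{S}(M_{1})$ with $\lambda\bar\varphi(P)\sim_{\mathrm{cyc}}P'$ \emph{and} $\lambda(u)-u\in R(\bar\varphi(P))$ for all $u$. Since $R(\bar\varphi(P))=R(P')$ annihilates $N'$, the correction $\lambda$ is invisible at the module level, so $\psi(un)=\bar\varphi(u)\psi(n)=\lambda\bar\varphi(u)\psi(n)$ and $(\lambda\bar\varphi,\psi,\eta)$ is the desired right-equivalence. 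In short: instead of normalizing $\varphi$ upstairs (which disturbs the representations), one corrects $\bar\varphi$ downstairs by an automorphism congruent to the identity modulo the ideal $R(P')$ that kills $N'$. Without this step your proof does not close.
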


\begin{proof} Let $(\phi,\psi,\eta): \mathcal{N} \rightarrow \mathcal{N'}$ be a right-equivalence of decorated representations. Then
\begin{enumerate}[(a)]
\item $\phi$ is an algebra automorphism of $\mathcal{F}_{S}(M_{1} \oplus M_{2})$ with $\phi_{|S}=id_{S}$ such that $\phi(P+W)$ is cyclically equivalent to $P'+W$.
\item $\psi: N \rightarrow N'$ is an isomorphism of $F$-vector spaces such that for $n \in N$ and $u \in \mathcal{F}_{S}(M_{1} \oplus M_{2})$ we have $\psi(un)=\phi(u)\psi(n)$.
\item $\eta: V \rightarrow V'$ is an isomorphism of left $S$-modules.
\end{enumerate}
Let $L$ be the closure of the two-sided ideal of $\mathcal{F}_{S}(M_{1} \oplus M_{2})$ generated by $M_{2}$, then as in \cite[Proposition 6.6]{2}:
\begin{enumerate}
\item $\mathcal{F}_{S}(M_{1} \oplus M_{2})=\mathcal{F}_{S}(M_{1}) \oplus L$
\item $R(P+W)=R(P) \oplus L$
\item $R(P'+W)=R(P') \oplus L$
\end{enumerate}
where $R(P)$ (respectively $R(P')$) is the closure of the two-sided ideal in $\mathcal{F}(M_{1})$ generated by all the elements of the form $X_{a^{\ast}}(P)$ (respectively $X_{a^{\ast}}(P'))$ where $a \in T_{1}$, a $Z$-free generating set of $M_{1}$.
Let $p: \mathcal{F}_{S}(M_{1} \oplus M_{2}) \twoheadrightarrow \mathcal{F}_{S}(M_{1})$ be the canonical projection induced by the decomposition $(1)$. As in \cite[Proposition 6.6]{2} there exists an algebra isomorphism
\begin{center}
$\rho=p \circ \phi_{|\mathcal{F}_{S}(M_{1})}: \mathcal{F}_{S}(M_{1}) \rightarrow \mathcal{F}_{S}(M_{1})$
\end{center}

such that $P'-\rho(P)$ is cyclically equivalent to an element of $R(\rho(P))^{2}$. By \cite[Proposition 6.5]{2} there exists an algebra automorphism $\lambda$ of $\mathcal{F}_{S}(M_{1})$ such that $\lambda \rho(P)$ is cyclically equivalent to $P'$ and $\lambda(u)-u \in R(\rho(P))$ for all $u \in \mathcal{F}_{S}(M_{1})$. By definition of decorated representation, $R(P+W)N=0$ and $R(P'+W)N'=0$. Since $L$ is contained in both $R(P+W)$ and $R(P'+W)$ then $LN=0$ and $LN'=0$. Then for $u \in \mathcal{F}_{S}(M_{1} \oplus M_{2})$ and $n \in N$
\begin{center}
$\psi(un)=\phi(u)\psi(n)$
\end{center}
Note that $\phi(u)=p\phi(u)+u'$ where $u' \in L$, then $\phi(u)\psi(n)=p\phi(u)\psi(n)=\rho(u)\psi(n)$, thus
\begin{center}
$\psi(un)=\rho(u)\psi(n)$
\end{center}
Since $P'-\rho(P)$ is cyclically equivalent to an element of $R(\rho(P))^{2}$, then there exists $z \in [\mathcal{F}_{S}(M_{1}),\mathcal{F}_{S}(M_{1})]$ such that $P'+z-\rho(P) \in R(\rho(P))^{2}$. Therefore by \cite[Proposition 6.4]{2} we obtain
\begin{center}
$R(P')=R(P'+z)=R(\rho(P))$
\end{center}
Now consider the automorphism $\lambda \rho$ of $\mathcal{F}_{S}(M_{1})$, this map has the property that $\lambda \rho(P)$ is cyclically equivalent to $P'$; also for $n \in N$ and $u \in \mathcal{F}_{S}(M_{1})$ we have
\begin{center}
$\psi(un)=\rho(u)\psi(n)$
\end{center}
and $\lambda \rho(u)=\rho(u)+w$ where $w \in R(\rho(P))=R(P')$. Therefore
\begin{center}
$\psi(un)=\lambda \rho(u)\psi(n)$
\end{center}
This proves that $(\lambda \rho, \psi, \eta)$ is a right-equivalence between $\mathcal{N}|_{\mathcal{F}_{S}(M_{1})}$ and $\mathcal{N'}|_{\mathcal{F}_{S}(M_{1})}$, as claimed.
\end{proof}

In what follows, we will use the following notation: for an $S$-bimodule $B$, define: \\

\begin{center}
$B_{\hat{k},\hat{k}}=\bar{e_{k}}B\bar{e_{k}}$
\end{center}

\vspace{0.1in}

Now consider the algebra isomorphism $\rho: \mathcal{F}_{S}(M)_{\hat{k},\hat{k}} \rightarrow \mathcal{F}_{S}((\mu_{k}M)_{\hat{k},\hat{k}})$ defined in \cite[Lemma 9.2]{2}. Let $P$ be a reduced potential in $\mathcal{F}_{S}(M)_{\hat{k},\hat{k}}$. Suppose first that:

\begin{center}
(A) $P=\displaystyle \sum_{u=1}^{N} f_{u}\gamma_{u}$
\end{center}

where $f_{u} \in F$ and $\gamma_{u}=x_{1} \hdots x_{n(u)}$ with $x_{i} \in \hat{T}$ as in \cite[Definition 26]{2}. Let $b \in T_{k}$ be fixed and let $N_{b}$ be the set of all $u \in [1,N]$ such that for some $x_{i}$, $a(x_{i})=b$. For each $u \in N_{b}$, let $\mathcal{C}(u)$ be the subset of all cyclic permutations $c$ of the set $\{1,\hdots,n(u)\}$ such that $x_{c(1)}=s_{c}b$. Then for each $c \in C(u)$ define $\gamma_{u}^{c}=x_{c(1)}x_{c(2)} \hdots x_{c(n(u))}$. Thus $\gamma_{u}^{c}=s_{c}br_{c}a_{c}z_{c}$ where $z_{c}=x_{3} \hdots x_{c(n(u))}$. Therefore \\

\begin{center}
$X_{b^{\ast}}(P)=\displaystyle \sum_{u \in N_{b}} \displaystyle \sum_{c \in \mathcal{C}(u)} f_{u}r_{c}a_{c}z_{c}s_{c}$
\end{center}

On the other hand:
\begin{align*}
X_{[bra]^{\ast}}(\rho(P))&=\displaystyle \sum_{u \in N_{b}} \displaystyle \sum_{c \in \mathcal{C}(u),r_{c}=r,a_{c}=a}f_{u}\rho(z_{c})s_{c} \\
&=\rho \left(\displaystyle \sum_{u \in N(b)} \displaystyle \sum_{c \in \mathcal{C}(u),r_{c}=r,a_{c}=a} f_{u}z_{c}s_{c} \right)
\end{align*}

Define $Y_{[bra]}(P):=\displaystyle \sum_{u \in N_{b}} \displaystyle \sum_{c \in \mathcal{C}(u),r_{c}=r,a_{c}=a}f_{u}z_{c}s_{c}$. Then \\

\begin{center}
$X_{[bra]^{\ast}}(\rho(P))=\rho(Y_{[bra]}(P))$
\end{center}

\medskip

Note that if $P$ is a potential in $\mathcal{F}_{S}(M)^{\geq n+3}$ then $Y_{[bra]}(P) \in \mathcal{F}_{S}(M)^{\geq n}$, thus if $(P_{n})_{n \geq 1}$ is a Cauchy sequence in $\mathcal{F}_{S}(M)$ then $(Y_{[bra]}(P_{n}))_{n \geq 1}$ is Cauchy as well. 
Now let $P$ be an arbitrary potential in $\mathcal{F}_{S}(M)$. We have \\

\begin{center}
$P=\displaystyle \lim_{n \to \infty} P_{n}$
\end{center}

where each $P_{n}$ is of the form given by (A). Define: \\

\begin{center}
$w=\displaystyle \lim_{n \to \infty} Y_{[bra]}(P_{n})$
\end{center}

Then
\begin{align*}
X_{[bra]^{\ast}}(\rho(P))&=\displaystyle \lim_{n \to \infty} X_{[bra]^{\ast}}(\rho(P_{n})) \\
&=\displaystyle \lim_{n \to \infty} \rho(Y_{[bra]}(P_{n})) \\
&=\rho(\displaystyle \lim_{n \to \infty} Y_{[bra]}(P_{n})) \\
&=\rho(w)
\end{align*}

Thus we let $Y_{[bra]}(P):=w$. Then $X_{[bra]^{\ast}}(\rho(P))=\rho(Y_{[bra]}(P))$. In \cite[p.60]{2} the following equalities are established for each potential $P$ in $T_{S}(M)$:

\begin{equation} \label{3.1}
\rho(b'X_{b^{\ast}}(P))=\displaystyle \sum_{r \in L(k),a \in _{k}T} [b'ra]X_{[bra]^{\ast}}(\rho(P)) 
\end{equation}

\begin{equation} \label{3.2}
\rho(X_{a^{\ast}}(P)a')=\displaystyle \sum_{b \in T_{k}, r \in L(k)} X_{[bra]^{\ast}}(\rho(P))[bra']
\end{equation}

By continuity, the above formulas remain valid for every potential $P \in \mathcal{F}_{S}(M)$. Using \ref{3.1} yields:

\begin{align*}
\rho(b'X_{b^{\ast}}(P))&=\displaystyle \sum_{r \in L(k),a \in _{k}T} \rho(b'ra) \rho(Y_{[bra]}(P)) \\
&=\rho \left( \displaystyle \sum_{r \in L(k), a \in _{k}T} b'raY_{[bra]}(P)\right)
\end{align*}

Since $\rho$ is injective, then: 

\begin{equation} \label{3.3}
b'X_{b^{\ast}}(P)=\displaystyle \sum_{r \in L(k), a \in _{k}T} b'raY_{[bra]}(P)
\end{equation}

Similarly:

\begin{equation} \label{3.4}
X_{a^{\ast}}(P)a'=\displaystyle \sum_{b \in T_{k},r \in L(k)} Y_{[bra]}(P)bra'
\end{equation}

For each $\psi \in M^{\ast}$ and for each positive integer $n$ we have an $F$-linear map $\psi_{\ast}: M^{\otimes n} \rightarrow M^{\otimes (n-1)}$ given by $\psi_{\ast}(m_{1} \otimes m_{2} \otimes \hdots \otimes m_{n})=\psi(m_{1})m_{2} \otimes \hdots \otimes m_{n}$. This map induces an $F$-linear map $\psi_{\ast}: \mathcal{F}_{S}(M) \rightarrow \mathcal{F}_{S}(M)$. Similarly, if $\eta \in ^{\ast}M$ then we obtain an $F$-linear map $_{\ast}\eta: M^{\otimes n} \rightarrow M^{\otimes(n-1)}$ given by $_{\ast}\eta(m_{1} \otimes \hdots \otimes m_{n-1} \otimes m_{n})=m_{1} \otimes \hdots \otimes m_{n-1}\eta(m_{n})$. 
Now suppose that $b' \in T_{k}$, then $b'e_{k}=b'$ and thus $e_{k}(b')^{\ast}=(b')^{\ast}$. Since $X^{P}$ is a morphism of $S$-bimodules \cite[Proposition 7.6]{2} then $e_{k}X_{b^{\ast}}(P)=X_{b^{\ast}}(P)$. Applying the map $(b')^{\ast}: \mathcal{F}_{S}(M) \rightarrow \mathcal{F}_{S}(M)$ to the left-hand side of \ref{3.3} yields \\

\begin{equation} \label{3.5}
X_{b^{\ast}}(P)=e_{k}X_{b^{\ast}}(P)=\displaystyle \sum_{r \in L(k),a \in _{k}T} e_{k}raY_{[bra]}(P)
\end{equation}

Therefore
\begin{equation} \label{3.6}
X_{b^{\ast}}(P)=\displaystyle \sum_{r \in L(k), a \in _{k}T} raY_{[bra]}(P)
\end{equation}

Let $H$ denote the set of all non-zero elements of the form $as$ where $a \in _{k}T$ and $s \in L$. Note that $H$ is a left $S$-local basis of $_{S}M$; for $x \in H$ we denote by $^{\ast}x \in ^{\ast}M$ the map given by $^{\ast}x(y)=0$ if $y \in H \setminus \{x\}$ and $x^{\ast}(x)=e_{i}$ where $e_{i}x=x$. Applying the map $^{\ast}(a')$ to \ref{3.4} we obtain: \\

\begin{equation} \label{3.7}
X_{a^{\ast}}(P)=\displaystyle \sum_{b \in T_{k}, r \in L(k)} Y_{[bra]}(P)br
\end{equation}

Let $\mathcal{N}=(N,V)$ be a decorated representation of the algebra with potential $(\mathcal{F}_{S}(M),P)$ and suppose that $k$ satisfies $(Me_{k} \otimes_{S} M)_{cyc}=\{0\}$. Define:

\begin{align*}
N_{in}=\displaystyle \bigoplus_{a \in _{k}T} D_{k} \otimes_{F} N_{\tau(a)} \\
N_{out}=\displaystyle \bigoplus_{b \in T_{k}} D_{k} \otimes_{F} N_{\sigma(b)}
\end{align*}

For each $a$ in $_{k}T$ and $r \in L(k)$ consider the projection map $\pi_{a}': N_{in} \rightarrow D_{k} \otimes_{F} N_{\tau(a)}$ and the map $\pi_{ra}': D_{k} \otimes_{F} N_{\tau(a)} \rightarrow N_{\tau(a)}$ given by $\pi_{ra}'(d \otimes n)=r^{\ast}(d)n$. Let $\xi_{a}': D_{k} \otimes_{F} N_{\tau(a)} \rightarrow N_{in}$ denote the inclusion map and define $\xi_{ra}': N_{\tau(a)} \rightarrow D_{k} \otimes_{F} N_{\tau(a)}$ as the map given by $\xi_{ra}'(n)=r \otimes n$. \\

Then for $r,r_{1} \in L(k)$ we have the following equalities

\begin{equation} \label{3.8}
\pi_{r_{1}a}' \xi_{ra}' = \delta_{r,r_{1}}id_{N_{\tau(a)}}
\end{equation}

\begin{equation} \label{3.9}
\pi_{ra}' \xi_{ra}'= id_{N_{\tau(a)}}
\end{equation}

For $a \in _{k}T$ and $r \in L(k)$ we define the following $F$-linear maps:

\begin{align*}
\pi_{ra}&=\pi_{ra}' \pi_{a}': N_{in} \rightarrow N_{\tau(a)} \\
\xi_{ra}&=\xi_{a}' \xi_{ra}': N_{\tau(a)} \rightarrow N_{in}
\end{align*}

then we have the following equalities \\
\begin{equation} \label{3.10}
\pi_{r_{1}a_{1}}\xi_{ra}=\delta_{r_{1}a_{1},ra}id_{N_{\tau(a)}}
\end{equation}

\begin{equation} \label{3.11}
\displaystyle \sum_{r \in L(k), a \in _{k}T} \xi_{ra}\pi_{ra}=id_{N_{in}}
\end{equation}

Similarly, for each $r \in L(k)$ and $b \in T_{k}$ we have the canonical projection $\pi_{b}': N_{out} \rightarrow D_{k} \otimes_{F} N_{\sigma(b)}$ and $\pi_{br}': D_{k} \otimes_{F} N_{\sigma(b)} \rightarrow N_{\sigma(b)}$ denotes the map given by $\pi_{br}'(d \otimes n)=(r^{-1})^{\ast}(d)n$. \\

We define $\xi_{br}': N_{\sigma(b)} \rightarrow D_{k} \otimes_{F} N_{\sigma(b)}$ as the map given by $\xi_{br}'(n)=r^{-1} \otimes n$ for every $n \in N_{\sigma(b)}$ and $\xi_{b}': D_{k} \otimes_{F} N_{\sigma(b)} \rightarrow N_{out}$ is the inclusion map.  \\

Then for $r,r_{1} \in L(k)$ and $b \in T_{k}$ we have the following equalities:

\begin{equation} \label{3.12}
\pi_{br_{1}}' \xi_{br}' = \delta_{r_{1},r} id_{N_{\sigma(b)}}
\end{equation}

\begin{equation} \label{3.13}
\pi_{br}' \xi_{br} ' = id_{N_{\sigma(b)}}
\end{equation}

Define the following $F$-linear maps:
\begin{align*}
\xi_{br}=\xi_{b}' \xi_{br}': N_{\sigma(b)} \rightarrow N_{out} \\
\pi_{br}=\pi_{br}' \pi_{b}': N_{out} \rightarrow N_{\sigma(b)}
\end{align*}

Then for $r,r_{1} \in L(k)$ and $b,b_{1} \in T_{k}$ we have
\begin{equation} \label{3.14}
\pi_{b_{1}r_{1}} \xi_{br}= \delta_{b_{1}r_{1},br}id_{N_{\sigma(b)}} 
\end{equation}

and
\begin{equation} \label{3.15}
\displaystyle \sum_{b \in T_{k}, r \in L(k)} \xi_{br} \pi_{br}=id_{N_{out}}
\end{equation}
We define a map of left $D_{k}$-modules $\alpha: N_{in} \rightarrow N_{k}$ as the map such that for all $a \in _{k}T, r \in L(k)$ we have: 
\begin{center}
$\alpha \xi_{ra}(n)=ran$
\end{center}

for each $n \in N_{\tau(a)}$. \\

Similarly, we define $\beta: N_{k} \rightarrow N_{out}$ as the $F$-linear map such that for all $b \in T_{k}, r \in L(k)$: 
\begin{center}
$\pi_{br} \beta(n)=brn$
\end{center}

for every $n \in N_{k}$. \\

Finally, the map $\gamma: N_{out} \rightarrow N_{in}$ is the morphism of left $D_{k}$-modules such that map $\gamma_{ra,bs}=\pi_{ra}\gamma \xi_{bs}: N_{\sigma(b)} \rightarrow N_{\tau(a)}$ where $r,s \in L(k)$, $a \in _{k}T$, $b \in T_{k}$, is given by: 

\begin{center}
$\gamma_{ra,bs}(n)=\displaystyle \sum_{w \in L(k)} r^{\ast}(s^{-1}w)Y_{[bwa]}(P)n$
\end{center}
for every $n \in N_{\sigma(b)}$. 

\begin{prop} The map $\beta$ is a morphism of left $D_{k}$-modules. 
\end{prop}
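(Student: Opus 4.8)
The plan is to use the completeness relation \ref{3.15} to get a closed formula for $\beta$, and then reduce everything to the identity \ref{2.3}. Applying $\sum_{b \in T_{k},\, r \in L(k)} \xi_{br}\pi_{br} = \mathrm{id}_{N_{out}}$ to $\beta(m)$ and invoking the defining property $\pi_{br}\beta(m)=brm$ yields
\[
\beta(m)=\sum_{b \in T_{k},\, r \in L(k)} \xi_{br}(brm)\qquad\text{for all } m\in N_{k}.
\]
Recall that the left $D_{k}$-module structure on $N_{out}=\bigoplus_{b}D_{k}\otimes_{F}N_{\sigma(b)}$ is the obvious one on the left tensor factor, that $\xi_{b}'$ is $D_{k}$-linear, and that $\xi_{br}(x)$ lies in the $b$-summand as $r^{-1}\otimes x$. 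So the whole verification reduces to an identity inside a single $D_{k}\otimes_{F}N_{\sigma(b)}$, namely $d\cdot\xi_{br}(x)=\sum_{t\in L(k)}(t^{-1})^{\ast}(dr^{-1})\,\xi_{bt}(x)$, which in turn amounts to $dr^{-1}=\sum_{t\in L(k)}(t^{-1})^{\ast}(dr^{-1})\,t^{-1}$. This last fact I would obtain by restricting \ref{3.15} (equivalently \ref{3.12} together with \ref{3.13}) to one summand: composing with $\xi_{b}'$ on the right and $\pi_{b}'$ on the left gives $\sum_{r}\xi_{br}'\pi_{br}'=\mathrm{id}_{D_{k}\otimes_{F}N_{\sigma(b)}}$, which forces $\sum_{r\in L(k)}(r^{-1})^{\ast}(y)\,r^{-1}=y$ for every $y\in D_{k}$.

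Granting this, I compute the two sides. On one hand, using that $r,d\in D_{k}\subseteq S$ and associativity of the module action,
\[
\beta(dn)=\sum_{b,r}\xi_{br}(br\,dn)=\sum_{b,r}\xi_{br}\big((b(rd))n\big).
\]
On the other hand, $d\,\beta(n)=\sum_{b,r}d\cdot\xi_{br}(brn)=\sum_{b,r}\sum_{t}(t^{-1})^{\ast}(dr^{-1})\,\xi_{bt}(brn)$; interchanging the sums and pulling the scalar $(t^{-1})^{\ast}(dr^{-1})\in F$ past $b$, $r$, the inner sum becomes $\xi_{bt}\big((b\sum_{r}(t^{-1})^{\ast}(dr^{-1})\,r)\,n\big)$, so $d\,\beta(n)=\sum_{b,t}\xi_{bt}\big((b\sum_{r}(t^{-1})^{\ast}(dr^{-1})\,r)\,n\big)$.

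The key algebraic input is \ref{2.3}: taking $z=r$, $w=t$ (both in $L(k)$) and $f=d\in D_{k}$ — permissible since \ref{2.3} may be applied with $f$ arbitrary in $D_{k}$ — gives $(t^{-1})^{\ast}(dr^{-1})=r^{\ast}(td)$. Hence $\sum_{r\in L(k)}(t^{-1})^{\ast}(dr^{-1})\,r=\sum_{r}r^{\ast}(td)\,r=td$, because $\{r\}_{r\in L(k)}$ is an $F$-basis of $D_{k}$ with dual functionals $\{r^{\ast}\}$. Substituting back, $d\,\beta(n)=\sum_{b,t}\xi_{bt}\big((b(td))n\big)$, which after renaming the summation index $t$ as $r$ is precisely the expression obtained for $\beta(dn)$. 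Therefore $\beta(dn)=d\,\beta(n)$, i.e.\ $\beta$ is a morphism of left $D_{k}$-modules.

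I expect the main obstacle to be the bookkeeping with the ``inverse'' local basis $\{r^{-1}\}_{r\in L(k)}$ and its dual functionals $\{(r^{-1})^{\ast}\}$: the maps $\pi_{br}$ and $\pi_{br}'$ are only $F$-linear, not $D_{k}$-linear, so one cannot simply apply them to both sides and compare; the argument must be routed through the $D_{k}$-linear projections $\pi_{b}'$ (or through the closed formula for $\beta$), and one must select the correct identity among \ref{2.1}--\ref{2.3}, here \ref{2.3}, in the correct variables to convert $(t^{-1})^{\ast}(dr^{-1})$ into $r^{\ast}(td)$.
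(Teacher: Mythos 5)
Your proof is correct and follows essentially the same route as the paper: both reduce the claim to a dual-basis computation inside $D_{k}$ via the completeness relation \ref{3.15}, the paper collapsing the resulting double sum with the orthogonality relation of \cite[Proposition 7.5]{2} while you invoke \ref{2.3} together with the expansion $\sum_{r}r^{\ast}(x)r=x$, which amounts to the same identity. No gaps.
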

\begin{proof} By linearity, it suffices to show that if $c \in L(k)$ and $n \in N_{k}$ then $\beta(cn)=c\beta(n)$. Using \cite[Proposition 7.5]{2} we obtain:
\begin{align*}
\beta(cn)&=c\displaystyle \sum_{b \in T_{k}, r \in L(k)} c^{-1}\xi_{br}\pi_{br}\beta(cn) = c\displaystyle \sum_{b \in T_{k}, r \in L(k)} c^{-1}(r^{-1} \otimes brcn) \\
&=c\displaystyle \sum_{b \in T_{k}, r,r_{1},r_{2} \in L(k)} (r_{1}^{-1})^{\ast}(c^{-1}r^{-1})r_{1}^{-1} \otimes br_{2}^{\ast}(rc)r_{2}n \\
&=c\displaystyle \sum_{b \in T_{k}, r_{1},r_{2} \in L(k)} \left( \displaystyle \sum_{r \in L(k)} r_{2}^{\ast}(rc)(r_{1}^{-1})^{\ast}(c^{-1}r^{-1})\right)(r_{1}^{-1} \otimes br_{2}n) \\
&=c\left(\displaystyle \sum_{b \in T_{k}, r \in L(k)}r^{-1}\otimes brn\right) \\
&=c\beta(n)
\end{align*}
as claimed. 
\end{proof}

\begin{lemma} \label{lem1} We have $\alpha \gamma=0$ and $\gamma \beta=0$. 
\end{lemma}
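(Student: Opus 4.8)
The plan is to prove both vanishing statements by the same recipe: evaluate the composite on a spanning family using the resolutions of identity \ref{3.11} and \ref{3.15}, substitute the defining formula for the components $\gamma_{ra,bs}$, resum over the basis $L(k)$, and recognize the result as a one-sided translate of one of the elements $X_{b^{\ast}}(P)$ or $X_{a^{\ast}}(P)$, which lies in $R(P)$ and hence annihilates $N$.

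For $\alpha\gamma=0$, by \ref{3.15} it suffices to show $\alpha\gamma\xi_{bs}=0$ for each $b\in T_{k}$ and $s\in L(k)$. I would fix $n\in N_{\sigma(b)}$, expand $\gamma\xi_{bs}(n)=\sum_{r\in L(k),\,a\in{}_{k}T}\xi_{ra}\gamma_{ra,bs}(n)$ via \ref{3.11}, substitute $\gamma_{ra,bs}(n)=\sum_{w}r^{\ast}(s^{-1}w)Y_{[bwa]}(P)n$, and apply $\alpha$ using $\alpha\xi_{ra}(m)=ram$ to obtain
\[
\alpha\gamma\xi_{bs}(n)=\sum_{r,a,w}r^{\ast}(s^{-1}w)\,ra\,Y_{[bwa]}(P)n .
\]
Since $r^{\ast}(s^{-1}w)\in Fe_{k}$ is central, I then resum over $r$ by the dual basis identity $\sum_{r\in L(k)}r^{\ast}(d)r=d$, valid for all $d\in D_{k}$, which collapses the expression to $s^{-1}\sum_{a,w}wa\,Y_{[bwa]}(P)n$; by \ref{3.6} this is $s^{-1}X_{b^{\ast}}(P)n$, and it vanishes because $X_{b^{\ast}}(P)\in R(P)$ and $R(P)N=0$.

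For $\gamma\beta=0$, I would fix $n\in N_{k}$ and use \ref{3.15} to write $\beta(n)=\sum_{b\in T_{k},\,r\in L(k)}\xi_{br}(brn)$, so that $\gamma\beta(n)=\sum_{b,r}\gamma\xi_{br}(brn)$. Expanding $\gamma\xi_{br}$ with \ref{3.11} and substituting $\gamma_{r'a,br}(m)=\sum_{w}(r')^{\ast}(r^{-1}w)Y_{[bwa]}(P)m$ gives
\[
\gamma\beta(n)=\sum_{b,r,r',a,w}(r')^{\ast}(r^{-1}w)\,\xi_{r'a}\!\bigl(Y_{[bwa]}(P)\,b\,r\,n\bigr).
\]
Here I would resum over $r$ first: since $(r')^{\ast}(r^{-1}w)$ is central and $Y_{[bwa]}(P)$ and $b$ act $F$-linearly, Remark \ref{rem3} gives $\sum_{r}(r')^{\ast}(r^{-1}w)\,r=w(r')^{-1}$, so the inner term becomes $\xi_{r'a}\bigl(Y_{[bwa]}(P)\,bw(r')^{-1}n\bigr)$. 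Summing over $b$ and $w$ and invoking \ref{3.7} in the form $\sum_{b,w}Y_{[bwa]}(P)bw=X_{a^{\ast}}(P)$ yields $\gamma\beta(n)=\sum_{r',a}\xi_{r'a}\bigl(X_{a^{\ast}}(P)(r')^{-1}n\bigr)$, which is $0$ because $(r')^{-1}n\in N$ and $X_{a^{\ast}}(P)\in R(P)$.

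The calculations themselves are mechanical; the points that need attention are the index bookkeeping in substituting the definition of $\gamma_{ra,bs}$ and the two resolutions of identity, the remark that $X_{b^{\ast}}(P)$ for $b\in T_{k}$ and $X_{a^{\ast}}(P)$ for $a\in{}_{k}T$ do lie in $R(P)$ since $T_{k},{}_{k}T\subseteq T$, and --- the only genuine subtlety --- picking the correct resummation over $L(k)$: in $\alpha\gamma$ the summation variable $r$ enters through $r^{\ast}$ and through the left factor $r$, so the plain basis expansion $\sum_{r}r^{\ast}(d)r=d$ applies, whereas in $\gamma\beta$ it enters through $r^{\ast}$ and through the factor $r$ sitting to the right of $b$, which is precisely the shape handled by Remark \ref{rem3}; confusing these is the easy mistake to make.
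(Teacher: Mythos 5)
Your proposal is correct and follows essentially the same route as the paper: expand the composite through the resolutions of identity \ref{3.11} and \ref{3.15}, insert the defining formula for $\gamma_{ra,bs}$, collapse one $L(k)$-sum by the dual-basis identity (respectively Remark \ref{rem3}), and recognize $X_{b^{\ast}}(P)$ via \ref{3.6} and $X_{a^{\ast}}(P)$ via \ref{3.7}, which annihilate $N$. The only cosmetic difference is in the $\gamma\beta=0$ case, where the paper first flips $r^{\ast}(s^{-1}w)$ to $s^{\ast}(wr^{-1})$ using \ref{2.1} and sums over the $N_{out}$-index, while you sum over the index coming from $\beta$ using Remark \ref{rem3}; both manipulations rest on \ref{2.1} and give the same result.
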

\begin{proof} We first show that $\alpha \gamma=0$. It suffices to show that for all $r \in L(k)$, $b \in T_{k}$, $\alpha \gamma \xi_{br}=0$. Let $n \in N_{\sigma(b)}$, then by \ref{3.6} and \ref{3.11}:
\begin{align*}
\alpha \gamma \xi_{br}(n)&= \alpha id_{N_{in}} \gamma \xi_{br}(n) = \displaystyle \sum_{s \in L(k), a \in _{k}T} \alpha \xi_{sa} \pi_{sa} \gamma \xi_{br}(n)\\
&=\displaystyle \sum_{s \in L(k), a \in _{k}T} \alpha \xi_{sa} \gamma_{sa,br}(n)\\
&=\displaystyle \sum_{s,w \in L(k), a \in _{k}T} \alpha \xi_{sa} s^{\ast}(r^{-1}w)Y_{[bwa]}(P)(n)\\
&=\displaystyle \sum_{s,w \in L(k), a \in _{k}T} sas^{\ast}(r^{-1}w)Y_{[bwa]}(P)(n)\\
&=\displaystyle \sum_{w \in L(k), a \in _{k}T} r^{-1}waY_{[bwa]}(P)n \\
&=r^{-1}X_{b^{\ast}}(P)n\\
&=0
\end{align*}

We now show that $\gamma\beta=0$. It suffices to show that for all $r \in L(k)$, $a \in _{k}T$ we have $\pi_{ra} \gamma \beta=0$. Let $n \in N_{k}$, then by \ref{3.7} and \ref{3.15}:
\begin{align*}
\pi_{ra}\gamma \beta(n)&=\pi_{ra} \gamma id_{N_{out}} \beta = \displaystyle \sum_{b \in T_{k},s \in L(k)} \pi_{ra} \gamma \xi_{bs} \pi_{bs} \beta(n) \\
&=\displaystyle \sum_{b \in T_{k}, s \in L(k)} \gamma_{ra,bs} \pi_{bs} \beta(n) \\
&\displaystyle \sum_{b \in T_{k},s,w \in L(k)} s^{\ast}(wr^{-1})Y_{[bwa]}(P)(bsn) \\
&=\displaystyle \sum_{b \in T_{k}, w \in L(k)} Y_{[bwa]}(P)bwr^{-1}n \\
&=X_{a^{\ast}}(P)r^{-1}n \\
&=0
\end{align*}
\end{proof}

\begin{lemma} \label{lem2} For each $m \in N_{in}$, $a \in _{k}T$ we have $\pi_{e_{k}a}(r^{-1}m)=\pi_{ra}(m)$.
\end{lemma}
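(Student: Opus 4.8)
The approach is to reduce to simple tensors sitting in a single summand of $N_{in}$, and then to appeal to the structural identity \ref{2.1} with an idempotent inserted. First I would note that the three operators involved — $\pi_{e_{k}a}$, $\pi_{ra}$, and left multiplication by $r^{-1}$ on the left $D_{k}$-module $N_{in} = \bigoplus_{a' \in {}_{k}T} D_{k}\otimes_{F} N_{\tau(a')}$ — are all $F$-linear. Hence it suffices to verify the claimed equality for $m = d \otimes n$ with $d \in D_{k}$ and $n \in N_{\tau(a')}$, lying in a single summand indexed by $a' \in {}_{k}T$.

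If $a' \neq a$, then both $\pi_{ra} = \pi_{ra}'\pi_{a}'$ and $\pi_{e_{k}a} = \pi_{e_{k}a}'\pi_{a}'$ annihilate $m$, because $\pi_{a}'$ is the projection onto the $a$-th summand; moreover $r^{-1}m = (r^{-1}d)\otimes n$ still lies in the $a'$-th summand, so it too is annihilated, and both sides are $0$. If $a' = a$, then unwinding the definitions of $\pi_{ra}'$ and $\pi_{e_{k}a}'$ gives $\pi_{ra}(m) = r^{\ast}(d)\,n$ and $\pi_{e_{k}a}(r^{-1}m) = (e_{k})^{\ast}(r^{-1}d)\,n$. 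So the lemma reduces to the scalar identity $(e_{k})^{\ast}(r^{-1}d) = r^{\ast}(d)$ in $F$, for every $d \in D_{k}$.

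To close this last point I would use that $e_{k} \in L(k)$ and that $e_{k}$ is the unit of $D_{k}$, so that $(e_{k})^{-1} = e_{k}$ and $d e_{k} = d$. Applying \ref{2.1} with $f = e_{k}$, $w = r$ and $z = d$ — legitimate since, by the remark following \ref{2.3}, \ref{2.1} remains valid for $z$ in all of $D_{k}$ — yields $(e_{k})^{\ast}(r^{-1}d) = r^{\ast}(d(e_{k})^{-1}) = r^{\ast}(d e_{k}) = r^{\ast}(d)$, which is exactly what is needed. The only step that is not pure bookkeeping is this last one: recognizing that \ref{2.1} with the idempotent substituted says precisely that the $e_{k}$-coefficient of $r^{-1}d$ equals the $r$-coefficient of $d$ (one could equivalently invoke Remark \ref{rem3} here). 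Everything else is the direct-sum decomposition of $N_{in}$ together with the definitions of $\pi_{ra}$ and $\pi_{e_{k}a}$.
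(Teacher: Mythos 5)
Your proposal is correct and follows essentially the same route as the paper: both arguments come down to the scalar identity $e_{k}^{\ast}(r^{-1}d)=r^{\ast}(d)$ obtained from \ref{2.1} with $f=e_{k}$, $w=r$, $z=d$ (using that $z$ may range over all of $D_{k}$ by linearity). The paper reaches this via the resolution of the identity $\sum\xi_{sa_{1}}\pi_{sa_{1}}=id_{N_{in}}$ rather than by reducing to simple tensors, but the content is identical.
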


\begin{proof} First, for any $a_{1} \in _{k}T$ and $n \in N_{\tau(a_{1})}$ we have

\begin{center}
$r^{-1}\xi_{sa_{1}}'(n)=r^{-1}(s \otimes n)=r^{-1}s \otimes n = \displaystyle \sum_{u \in L(k)} u \otimes u^{\ast}(r^{-1}s)n = \displaystyle \sum_{u \in L(k)} \xi_{ua_{1}}'(u^{\ast}(r^{-1}s)n)$
\end{center}

Then
\begin{center}
$r^{-1}\xi_{sa_{1}}(n)=r^{-1}\xi_{a_{1}}' \xi_{sa_{1}}'(n) = \displaystyle \sum_{u \in L(k)} \xi_{a_{1}}' \xi_{ua_{1}}'(u^{\ast}(r^{-1}s)n)=\displaystyle \sum_{u \in L(k)} \xi_{ua_{1}}(u^{\ast}(r^{-1}s)n)$
\end{center}

Now let $m \in N_{in}$, then using \ref{3.10} and \ref{3.11} we obtain \\
\begin{align*}
\pi_{e_{k}a}(r^{-1}m)&=\displaystyle \sum_{s \in L(k), a_{1} \in _{k}T} \pi_{e_{k}a}r^{-1}\xi_{sa_{1}}\pi_{sa_{1}}(m)\\
&=\displaystyle \sum_{s,u \in L(k), a_{1} \in _{k}T} \pi_{e_{k}a} \xi_{ua_{1}}\left(u^{\ast}(r^{-1}s)\pi_{sa_{1}}(m)\right)\\
&=\displaystyle \sum_{s \in L(k)} e_{k}^{\ast}(r^{-1}s)\pi_{sa}(m)\\
&=\pi_{ra}(m)
\end{align*}

and the lemma follows.
\end{proof}

\section{Premutation of a decorated representation} \label{sec4}

Consider now the algebra with potential $(\mathcal{F}_{S}(\widetilde{M}),\widetilde{P})$. Recall from \cite[Definition 37]{2} that: 

\begin{align*}
\widetilde{M}&:=\bar{e_{k}}M\bar{e_{k}} \oplus Me_{k}M \oplus (e_{k}M)^{\ast} \oplus ^{\ast}(Me_{k})\\
\widetilde{P}&:=[P]+\displaystyle \sum_{sa \in _{k}\hat{T},bt \in \tilde{T}_{k}}[btsa]((sa)^{\ast})(^{\ast}(bt))
\end{align*}

To a decorated representation $\mathcal{N}=(N,V)$ of the algebra with potential $(\mathcal{F}_{S}(M),P)$ we will associate a decorated representation $\widetilde{\mu_{k}}(\mathcal{N})=(\overline{N},\overline{V})$ of $(\mathcal{F}_{S}(\widetilde{M}),\widetilde{P})$ as follows. First set: \\

\begin{center}
$\overline{N}_{i}=N_{i}$, $\overline{V_{i}}=V_{i}$ if $i \neq k$
\end{center}

Define $\overline{N}_{k}$ and $\overline{V}_{k}$ as follows:

\begin{align*}
\overline{N}_{k}&=\frac{ker(\gamma)}{im(\beta)} \oplus im(\gamma) \oplus \frac{ker(\alpha)}{im(\gamma)} \oplus V_{k} \\
\overline{V}_{k}&=\frac{ker(\beta)}{ker(\beta) \cap im(\alpha)}
\end{align*}

Let
\begin{equation} \label{4.1}
\begin{split}
&J_{1}:  \frac{ker(\gamma)}{im(\beta)} \rightarrow \overline{N}_{k} \\
&J_{2}:  im(\gamma) \rightarrow \overline{N}_{k} \\
&J_{3}:  \frac{ker(\alpha)}{im(\gamma)} \rightarrow \overline{N}_{k} \\
&J_{4}:  V_{k} \rightarrow \overline{N}_{k}
\end{split}
\end{equation}

be the corresponding inclusions and let
\begin{equation} \label{4.2}
\begin{split}
&\Pi_{1}: \overline{N}_{k}  \rightarrow \frac{ker(\gamma)}{im(\beta)}\\
&\Pi_{2}: \overline{N}_{k} \rightarrow im(\gamma)\\
&\Pi_{3}: \overline{N}_{k} \rightarrow  \frac{ker(\alpha)}{im(\gamma)}\\
&\Pi_{4}: \overline{N}_{k} \rightarrow V_{k}
\end{split}
\end{equation}

denote the canonical projections. \\

\textbf{Remark}. Suppose that $M$ is $Z$-freely generated by $M_{0}$ and let $X$ be a finite dimensional left $S$-module. To induce a structure of a $T_{S}(M)$-left module on $X$ it suffices to give a map of $S$-left modules $M \otimes_{S} X \rightarrow X$. Let $i \neq j$ be integers in $[1,n]$. Then
\begin{align*}
\operatorname{Hom}_{D_{i}}(e_{i}Me_{j} \otimes_{S} X,X)& \cong \operatorname{Hom}_{D_{i}}((D_{i} \otimes_{F} e_{i}M_{0}e_{j} \otimes_{F} D_{j}) \otimes_{D_{j}} e_{j}X,e_{i}X) \\
&\cong \operatorname{Hom}_{D_{i}}(D_{i} \otimes_{F} e_{i}M_{0}e_{j} \otimes_{F} (D_{j} \otimes_{D_{j}} e_{j}X), e_{i}X) \\
&\cong \operatorname{Hom}_{D_{i}}(D_{i} \otimes_{F} e_{i}M_{0}e_{j} \otimes_{F} e_{j}X,e_{i}X) \\
&\cong \operatorname{Hom}_{F}(e_{i}M_{0}e_{j} \otimes_{F} e_{j}X, e_{i}X)
\end{align*}

Hence $\operatorname{Hom}_{S}(_{S}(M \otimes_{S} X),_{S}X) \cong \displaystyle \bigoplus_{i,j}  \operatorname{Hom}_{F}(e_{i}M_{0}e_{j} \otimes_{F} e_{j}X, e_{i}X)$ as $F$-vector spaces. Therefore, a map of left $S$-modules $M \otimes_{S} X \rightarrow X$ is determined by a collection of $F$-linear maps $\theta_{i,j}: e_{i}M_{0}e_{j} \otimes_{F} e_{j}X \rightarrow e_{i}X$. It follows that each element $c \in e_{\sigma(c)}M_{0}e_{\tau(c)}$ gives rise to a multiplication operator $c_{X}: X_{\tau(c)} \rightarrow X_{\sigma(c)}$ given by $c_{X}(x):=\theta_{\sigma(c),\tau(c)}(c \otimes x)$.  \\

Recall from \cite[Lemma 8.7]{2} that $\widetilde{M}$ is $Z$-freely generated by the following $Z$-subbimodule: \\

\begin{center}
$(\widetilde{M})_{0}:=\bar{e_{k}}M_{0}\bar{e_{k}} \oplus M_{0}e_{k}Se_{k}M_{0} \oplus e_{k}(_{0}N) \oplus N_{0}e_{k}$
\end{center}

\vspace{0.2in}

To give $\overline{N}$ a structure of a left $T_{S}(\widetilde{M})$-module on $\overline{N}$ we will proceed by cases, by giving the action of each summand of $(\widetilde{M})_{0}$ in $\overline{N}$. \\

$\bullet$ Suppose first that $i,j \neq k$. Then 

\begin{center}
$e_{i}(\widetilde{M})_{0}e_{j}=e_{i}M_{0}e_{j} \oplus e_{i}M_{0}e_{k}Se_{k}M_{0}e_{j}$
\end{center}

\vspace{0.1in}

Assume that $c \in e_{i}M_{0}e_{j}$. By assumption $i,j \neq k$ and thus both $\sigma(c)$ and $\tau(c)$ are not equal to $k$. Then $\overline{N}_{\tau(c)}=N_{\tau(c)}$ and $\overline{N}_{\sigma(c)}=N_{\sigma(c)}$. Therefore we set $c_{\overline{N}}=c_{N}$.  
Assume now that $c$ is an element of the $Z$-local basis of $e_{i}M_{0}e_{k}Se_{k}M_{0}e_{j}$ then $c=\rho(bra)$ for some $b \in T \cap e_{i}Me_{k}$, $r \in L(k)$ and $a \in T \cap e_{k}Me_{j}$. In this case we set $\rho(bra)_{\overline{N}}:=(bra)_{N}$. \\

Recall that $\{^{\ast}b: b \in T\}$ is a $Z$-local basis of $_{0}N$ and $\{a^{\ast}: a \in T\}$ is a $Z$-local basis of $N_{0}$. Then $\{^{\ast}b: b \in T_{k}\}$ is a $Z$-free generating set of $e_{k}(^{\ast}M)$ and $\{a^{\ast}: a \in _{k}T\}$ is a $Z$-free generating set of $M^{\ast}e_{k}$. Suppose that $b=e_{\sigma(b)}be_{k}$, then $\tau(^{\ast} b)=\sigma(b)$. Therefore
\begin{align*}
\overline{N}_{in}&=\displaystyle \bigoplus_{b \in T_{k}} D_{k} \otimes_{F} N_{\tau(^{\ast}b)} \\
&=\displaystyle \bigoplus_{b \in T_{k}} D_{k} \otimes_{F} N_{\sigma(b)} \\
&=N_{out}
\end{align*}

whence $\overline{N}_{in}=N_{out}$. A similar argument shows that $\overline{N}_{out}=N_{in}$. \\

We have the inclusion maps
\begin{align*}
& j: ker(\gamma) \rightarrow N_{out} \\
& i: im(\gamma) \rightarrow N_{in} \\
& j': ker(\alpha) \rightarrow N_{in}
\end{align*}

and the canonical projections
\begin{align*}
& \pi_{1}: ker(\gamma) \rightarrow \frac{ker(\gamma)}{im(\beta)} \\
& \pi_{2}: ker(\alpha) \rightarrow \frac{ker(\alpha)}{im(\gamma)}
\end{align*}

\vspace{0.1in}

As in \cite{5} we introduce the following splitting data: \\
\begin{enumerate}[(a)]
\item Choose a $D_{k}$-linear map $p: N_{out} \rightarrow ker(\gamma)$ such that $pj=id_{ker(\gamma)}$.
\item Choose a $D_{k}$-linear map $\sigma_{2}: ker(\alpha)/im(\gamma) \rightarrow ker(\alpha)$ such that $\pi_{2} \sigma_{2}=id_{ker(\alpha)/im(\gamma)}$.
\end{enumerate}

$\bullet$ Suppose now that $i \neq k$ and that $j=k$. Then $e_{i}(\widetilde{M})_{0}e_{k}=e_{i}(N_{0})e_{k}$. Let $a \in _{k}T$, then $\tau(a^{\ast})=k$ and $\sigma(a^{\ast})=\tau(a)$.  We define an $F$-linear map: \\

\begin{center}
$\overline{N}(a^{\ast}): \overline{N}_{k} \rightarrow N_{\tau(a)}$
\end{center}

as follows
\begin{equation} \label{4.3}
\begin{split}
\overline{N}(a^{\ast})J_{1}&=0 \\
\overline{N}(a^{\ast})J_{2}&=c_{k}^{-1}\pi_{e_{k}a}i \\
\overline{N}(a^{\ast})J_{3}&=c_{k}^{-1}\pi_{e_{k}a}j'\sigma_{2} \\
\overline{N}(a^{\ast})J_{4}&=0
\end{split}
\end{equation}

where $c_{k}=[D_{k}:F]$. \\

$\bullet$ In what follows, we let $\gamma=i\gamma'$ where $\gamma': N_{out} \twoheadrightarrow im(\gamma)$. Suppose now that $i=k$ and that $j \neq k$. Then \\

\begin{center}
$e_{i}(\widetilde{M})_{0}e_{j}=e_{k}(_{0}N)e_{j}$
\end{center}

\vspace{0.1in}

Since $j \neq k$ then $\overline{N}_{j}=N_{j}=e_{j}N$. For every $b \in T_{k}$, we define an $F$-linear map: \\

\begin{center}
$\overline{N}(^{\ast}b): N_{\sigma(b)} \rightarrow \overline{N}_{k}$
\end{center}

as follows
\begin{equation} \label{4.4}
\begin{split}
\Pi_{1} \overline{N}(^{\ast}b) & = -\pi_{1}p\xi_{be_{k}} \\
\Pi_{2} \overline{N}(^{\ast}b) &= - \gamma' \xi_{be_{k}} \\
\Pi_{3} \overline{N}(^{\ast}b) &= 0 \\
\Pi_{4} \overline{N}(^{\ast}b) &=0
\end{split}
\end{equation}

The previous construction makes $\overline{N}$ a left $T_{S}(\widetilde{M})$-module. To see that $\overline{N}$ is in fact a module over the completed algebra $\mathcal{F}_{S}(\widetilde{M})$ it suffices to note that the $\mathcal{F}_{S}(M)$-module $N$ is nilpotent \cite[p. 39]{5} and thus $\overline{N}$ is annihilated by $\langle \widetilde{M} \rangle^{n}$ for large enough $n$. 

\begin{lemma} \label{lem3}  Let $\rho: \mathcal{F}_{S}(M)_{\hat{k},\hat{k}} \rightarrow \mathcal{F}_{S}((\mu_{k}M)_{\hat{k},\hat{k}})$ be the algebra isomorphism introduced on page $5$ and let $u \in \mathcal{F}_{S}(M)_{\hat{k},\hat{k}}$. Then $\rho(u)_{\overline{N}}=u_{N}$.
\end{lemma}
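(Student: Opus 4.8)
The plan is to reduce the statement to the generators of the algebra $\mathcal{F}_S(M)_{\hat k,\hat k}$ and then verify compatibility with the module structures case by case. First I would observe that both sides of the claimed identity are continuous (in the relevant topology) and multiplicative in the appropriate sense: $\rho$ is an algebra homomorphism by \cite[Lemma 9.2]{2}, and both $u \mapsto u_{\overline N}$ and $u \mapsto u_N$ are algebra homomorphisms from $\mathcal{F}_S(M)_{\hat k,\hat k}$ into $\operatorname{End}_F(\overline N)$ and $\operatorname{End}_F(N)$ respectively (once one notes $\overline N_i = N_i$ for $i\neq k$, so the two ambient vector spaces on the relevant summands literally agree). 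Hence it suffices to check the identity on a $Z$-local generating set of $M_{\hat k,\hat k}$, namely on elements $c \in e_iM_0e_j$ with $i,j\neq k$ together with the generators of $e_iM_0e_kSe_kM_0e_j$, and then extend by multiplicativity and continuity. Since $\overline N$ is nilpotent over $\mathcal{F}_S(\widetilde M)$, the infinite sums cause no trouble.

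Next I would treat the two kinds of generators. For $c \in e_iM_0e_j$ with $i,j\neq k$: by construction in the Remark preceding Lemma~\ref{lem3} we set $c_{\overline N} := c_N$, and since $\rho$ restricted to this summand is the identity (it only reshuffles the $Me_kM$ part), the identity $\rho(c)_{\overline N} = c_{\overline N} = c_N$ is immediate. For a generator of the form $\rho(bra)$ with $b \in T\cap e_iMe_k$, $r \in L(k)$, $a \in T\cap e_kMe_j$: by construction we set $\rho(bra)_{\overline N} := (bra)_N$, which is exactly what must be shown — but one must check this is well-defined, i.e. that the assignment $\rho(bra) \mapsto (bra)_N$ respects the $Z$-bilinear relations among the $bra$ (linearity in $b$, in $a$, and the $D_k$-action through $r$), so that it genuinely extends to the action of the summand $Me_kM$. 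This is where formulas \eqref{2.1}–\eqref{2.3} and Remark~\ref{rem3} enter: they guarantee that the dual-basis bookkeeping in $\rho$ matches the multiplication $b\,r\,a$ in $N$.

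The step I expect to be the main obstacle is precisely verifying this well-definedness / compatibility: that the composite $\rho(u)_{\overline N}$, built from the piecewise definitions \eqref{4.3}, \eqref{4.4} (and the splitting data $p,\sigma_2$), actually reassembles on the $\hat k,\hat k$-part to $u_N$ independently of the chosen splittings. Concretely, for a word $u = x_1\cdots x_m$ in $\mathcal{F}_S(M)_{\hat k,\hat k}$ one decomposes each passage "through vertex $k$" inside $u$ using \eqref{3.11} or \eqref{3.15} ($\sum \xi_{ra}\pi_{ra} = \mathrm{id}$), and the telescoping that results from Lemma~\ref{lem2} ($\pi_{e_ka}(r^{-1}m) = \pi_{ra}(m)$) together with $\alpha\gamma = 0 = \gamma\beta$ (Lemma~\ref{lem1}) should collapse the auxiliary maps. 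I would carry this out by induction on the word length $m$, peeling off one letter at a time: the inductive step reduces to the single-generator cases above, and the potentially troublesome cross-terms (those landing in $\operatorname{im}\gamma$ or $\ker\alpha/\operatorname{im}\gamma$ via the splittings) cancel because $\rho(bra)$ never re-enters those summands — $\overline N(a^\ast)J_1 = \overline N(a^\ast)J_4 = 0$ and $\Pi_3\,\overline N(^\ast b) = \Pi_4\,\overline N(^\ast b) = 0$ by \eqref{4.3} and \eqref{4.4}. The factors $c_k^{-1} = [D_k:F]^{-1}$ appearing in \eqref{4.3} are exactly compensated by the summation over $L(k)$ (of cardinality $c_k$) coming from \eqref{3.11}, which is why the hypothesis $\operatorname{char}(F)\nmid\operatorname{card}L(i)$ is needed; I would make sure this cancellation is exhibited explicitly in the induction.
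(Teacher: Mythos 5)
Your first two paragraphs capture exactly the paper's argument: reduce by linearity, continuity and multiplicativity of $\rho$ to words $s(x_{1})x_{1}\cdots s(x_{l})x_{l}$, and then the paper runs an induction on $l$ whose base case is precisely the two definitional identities you list, $c_{\overline{N}}=c_{N}$ for $c\in \bar{e_{k}}M_{0}\bar{e_{k}}$ and $\rho(bra)_{\overline{N}}=(bra)_{N}$. Up to that point the proposal is correct and is essentially the paper's proof.

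The third paragraph, however, identifies a ``main obstacle'' that does not exist, and the machinery you propose to overcome it is irrelevant to this lemma. Since $u\in\mathcal{F}_{S}(M)_{\hat{k},\hat{k}}$, the element $\rho(u)$ lies in the subalgebra of $\mathcal{F}_{S}(\mu_{k}M)$ generated by $\bar{e_{k}}S$, $\bar{e_{k}}M_{0}\bar{e_{k}}$ and the $[bra]$'s; the operators $\overline{N}(a^{\ast})$ and $\overline{N}(^{\ast}b)$ of \ref{4.3}--\ref{4.4}, the splitting data $(p,\sigma_{2})$, the decomposition of $\overline{N}_{k}$, Lemmas \ref{lem1} and \ref{lem2}, the sums \ref{3.11}/\ref{3.15}, and the normalization $c_{k}^{-1}$ never enter: the action of $\rho(u)$ stays entirely inside $\bigoplus_{i\neq k}N_{i}$. (Those ingredients are needed for the \emph{next} result, that $R(\widetilde{P})$ annihilates $\overline{N}$, not here.) There is also no well-definedness issue to check for $\rho(bra)\mapsto (bra)_{N}$: by \cite[Lemma 8.7]{2} the elements $\rho(bra)$ with $b\in T\cap e_{i}Me_{k}$, $r\in L(k)$, $a\in T\cap e_{k}Me_{j}$ form an $F$-basis of the $Z$-free generating subbimodule $e_{i}M_{0}e_{k}Se_{k}M_{0}e_{j}$, so by the Remark preceding the lemma one may assign their actions freely and extend by the $S$-bimodule structure; relations \ref{2.1}--\ref{2.3} play no role. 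What actually remains after the generator check is only to see that the multiplicative extension agrees with $u_{N}$ on longer words --- i.e.\ that $\rho$ re-brackets a word passing through $e_{k}$ into a product of $[bra]$'s compatibly --- and this is exactly the induction on word length, using only that $\rho$ is an algebra morphism.
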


\begin{proof}

First note that $\mathcal{F}_{S}(M)=S \oplus M \oplus \mathcal{F}_{S}(M)^{\geq 2}$. Let $u \in \mathcal{F}_{S}(M)_{\hat{k},\hat{k}}$, then $u=s+m+x$ where $s \in \bar{e_{k}}S$, $m \in M_{\hat{k},\hat{k}}$ and $x \in \left(\mathcal{F}_{S}(M)^{\geq 2}\right)_{\hat{k},\hat{k}}$. Then
\begin{center}
$\rho(u)=s+m+\rho(x)$
\end{center}

By continuity and linearity of $\rho$, it suffices to treat the case when $x$ is of the form $s(x_{1})x_{1}s(x_{2}) \hdots s(x_{l})x_{l}$, where $s(x_{i}) \in L(\sigma(x_{i}))$ and $x_{i} \in T$. We'll use induction on $l$. Suppose first that $x=s(x_{1})x_{1}s(x_{2})x_{2}$ and we may assume that $x_{1}s(x_{2})x_{2} \in M_{0}e_{k}Se_{k}M_{0}$, then \\

\begin{center}
$\rho(x)=s(x_{1})\rho(x_{1}s(x_{2})x_{2})$
\end{center}

Therefore
\begin{center}
$\rho(x)n = s(x_{1}) \rho(x_{1}s(x_{2})x_{2})n$
\end{center}

\smallskip

Since $[b_{q}ra_{s}]_{\overline{N}}=(b_{q}ra_{s})_{N}$ then $\rho(b_{q}ra_{s})n=b_{q}ra_{s}n$. It follows that
\begin{align*}
\rho(x)n &= s(x_{1}) \rho(x_{1}s(x_{2})x_{2})n \\
&=s(x_{1}) x_{1}s(x_{2})x_{2}n \\
&=xn
\end{align*}

Suppose now that the claim holds for the length of $x$ less than $n$. We have:
\begin{center}
$x=s(x_{1})x_{1}s(x_{2})x_{2}\hdots s(x_{l-2})x_{l-2} s(x_{l-1})x_{l-1}s(x_{l})x_{l}$
\end{center}

Using the fact that $\rho$ is an algebra morphism together with the base case $l=2$ we obtain:
\begin{align*}
\rho(x)n &=\rho(s(x_{1})x_{1} \hdots s(x_{l-2})x_{l-2}) \rho(s(x_{l-1})x_{l-1}s(x_{l})x_{l})n \\
&=\rho(s(x_{1})x_{1} \hdots(x_{l-2})x_{l-2}) s(x_{l-1})\rho(x_{l-1}s(x_{l})x_{l})n \\
&=\rho(s(x_{1})x_{1} \hdots s(x_{l-2})x_{l-2})s(x_{l-1})x_{l-1}s(x_{l})x_{l}n \\
&=\rho(s(x_{1})x_{1} \hdots s(x_{l-2})x_{l-2})n' 
\end{align*}

where $n':= s(x_{l-1})x_{l-1}s(x_{l})x_{l}n$. Since $s(x_{1})x_{1}\hdots s(x_{l-2})x_{l-2}$ has length less than $l$, then:

\begin{center}
$\rho(s(x_{1})x_{1} \hdots s(x_{l-2})x_{l-2})n'=s(x_{1})x_{1} \hdots s(x_{l-2})x_{l-2}n'$
\end{center}

Therefore
\begin{align*}
\rho(x)n&=s(x_{1})x_{1} \hdots s(x_{l-2})x_{l-2}n' \\
&=s(x_{1})x_{1}\hdots s(x_{l-2})x_{l-2}s(x_{l-1})x_{l-1}s(x_{l})x_{l}n \\
&=xn
\end{align*}

it follows that $\rho(x)n=xn$ completing the proof.
\end{proof}

\begin{prop} The pair $\widetilde{\mu_{k}}(\mathcal{N})=(\overline{N},\overline{V})$ is a decorated representation of $(\mathcal{F}_{S}(\widetilde{M}),\widetilde{P})$. 
\end{prop}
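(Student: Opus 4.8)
The statement asks for two things: that $\overline{V}$ is a finite-dimensional left $S$-module, and that $\overline{N}$ is a finite-dimensional left $\mathcal{F}_{S}(\widetilde{M})$-module annihilated by $R(\widetilde{P})$. The first is immediate: $\overline{V}_{i}=V_{i}$ for $i\neq k$, while $\overline{V}_{k}=ker(\beta)/(ker(\beta)\cap im(\alpha))$ is a finite-dimensional left $D_{k}$-module, so $\overline{V}=\bigoplus_{i}\overline{V}_{i}$ is a finite-dimensional left $S=\prod_{i}D_{i}$-module. The module $\overline{N}$ is finite-dimensional by construction, being a finite direct sum of subquotients of the $N_{i}$, $N_{in}$, $N_{out}$ and $V_{k}$, and it was already made into a left $\mathcal{F}_{S}(\widetilde{M})$-module in the paragraph preceding Lemma \ref{lem3} (the $T_{S}(\widetilde{M})$-action, extended to $\mathcal{F}_{S}(\widetilde{M})$ by nilpotency of $N$). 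So the whole content is the equality $R(\widetilde{P})\,\overline{N}=0$. Since $\overline{N}$ is annihilated by $\langle\widetilde{M}\rangle^{n}$ for $n\gg 0$ and $R(\widetilde{P})$ is the closure of the two-sided ideal generated by the elements $X_{c^{\ast}}(\widetilde{P})$ with $c$ ranging over a $Z$-local basis $\widetilde{T}$ of $(\widetilde{M})_{0}$, it suffices to show that each multiplication operator $X_{c^{\ast}}(\widetilde{P})_{\overline{N}}$ vanishes. By \cite[Lemma 8.7]{2}, $\widetilde{T}$ is a disjoint union of four families, one for each summand of $(\widetilde{M})_{0}=\bar{e_{k}}M_{0}\bar{e_{k}}\oplus M_{0}e_{k}Se_{k}M_{0}\oplus e_{k}({}_{0}N)\oplus N_{0}e_{k}$, and $\widetilde{P}=[P]+\Delta$ with $\Delta=\sum_{\lambda a\in{}_{k}\hat{T},\ dt\in\widetilde{T}_{k}}[dt\lambda a]((\lambda a)^{\ast})({}^{\ast}(dt))$; replacing $P$ by a cyclically equivalent potential (which changes neither $R(P)$ nor the data $Y_{[bwa]}(P)$, hence not the construction) we may assume $P\in\mathcal{F}_{S}(M)_{\hat{k},\hat{k}}$, so that $[P]=\rho(P)$. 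I would then go through the four families.

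If $c$ lies in the family attached to $\bar{e_{k}}M_{0}\bar{e_{k}}$, the term $\Delta$ does not involve $c$, so $X_{c^{\ast}}(\widetilde{P})=X_{c^{\ast}}(\rho(P))=\rho(X_{c^{\ast}}(P))$ (compatibility of $\rho$ with $X$-derivatives in directions not touching $k$, from \cite{2}), and since $X_{c^{\ast}}(P)\in\mathcal{F}_{S}(M)_{\hat{k},\hat{k}}$ Lemma \ref{lem3} yields $X_{c^{\ast}}(\widetilde{P})_{\overline{N}}=X_{c^{\ast}}(P)_{N}=0$ (using $R(P)N=0$). For a new arrow $c=[bra]$ ($b\in T_{k}$, $r\in L(k)$, $a\in{}_{k}T$; here $\sigma(b),\tau(a)\neq k$ since $M_{cyc}=0$) the contribution of $[P]=\rho(P)$ to $X_{[bra]^{\ast}}(\widetilde{P})_{\overline{N}}$ equals $Y_{[bra]}(P)_{N}$, by the identity $X_{[bra]^{\ast}}(\rho(P))=\rho(Y_{[bra]}(P))$ recalled in the text together with Lemma \ref{lem3}. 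The contribution of $\Delta$ is computed by differentiating $\Delta$ with respect to $[bra]$: expanding each $[dt\lambda a]$ in the basis $\{[dua]\}_{u\in L(k)}$ (the coefficient of $[bra]$ being $r^{\ast}(t\lambda)$) and using \ref{4.3}--\ref{4.4}, one gets an expression in $\overline{N}(a^{\ast})$, $\overline{N}({}^{\ast}b)$ and scalars; because $\overline{N}(a^{\ast})J_{1}=\overline{N}(a^{\ast})J_{4}=0$ and $\Pi_{3}\overline{N}({}^{\ast}b)=\Pi_{4}\overline{N}({}^{\ast}b)=0$, the composite $\overline{N}(a^{\ast})\overline{N}({}^{\ast}b)$ collapses---with no splitting data involved---to $-c_{k}^{-1}\pi_{e_{k}a}\gamma\xi_{be_{k}}=-c_{k}^{-1}\gamma_{e_{k}a,be_{k}}=-c_{k}^{-1}Y_{[ba]}(P)_{N}$, and, collecting the scalar factors by means of \ref{2.1}--\ref{2.4} and Remark \ref{rem3} (here $c_{k}=[D_{k}:F]=\operatorname{card}L(k)$ is invertible in $F$), the $\Delta$-contribution works out to $-Y_{[bra]}(P)_{N}$, so $X_{[bra]^{\ast}}(\widetilde{P})_{\overline{N}}=0$.

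For a new arrow $c=a^{\ast}$ ($a\in{}_{k}T$), respectively $c={}^{\ast}b$ ($b\in T_{k}$), the potential $[P]=\rho(P)$ involves neither, so $X_{(a^{\ast})^{\ast}}(\widetilde{P})=X_{(a^{\ast})^{\ast}}(\Delta)$, respectively $X_{({}^{\ast}b)^{\ast}}(\widetilde{P})=X_{({}^{\ast}b)^{\ast}}(\Delta)$; this is a sum over $b\in T_{k}$ (resp. over $a\in{}_{k}T$) and over $L(k)$ of composites that factor through the maps $\overline{N}({}^{\ast}b)$ (resp. $\overline{N}(a^{\ast})$). Composing with $\Pi_{3},\Pi_{4}$ (resp. with $J_{1},J_{4}$) kills the corresponding components by \ref{4.4} (resp. \ref{4.3}), and the remaining components vanish after rewriting the relevant sums through $\beta$ (resp. through $\alpha$) and invoking $\gamma\beta=0$ (resp. $\alpha\gamma=0$) of Lemma \ref{lem1}, the splitting relation $pj=id_{ker(\gamma)}$ (resp. $\pi_{2}\sigma_{2}=id$), and Lemma \ref{lem2}. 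Since $X_{c^{\ast}}(\widetilde{P})_{\overline{N}}=0$ in each of the four cases, $R(\widetilde{P})\,\overline{N}=0$, and therefore $\widetilde{\mu_{k}}(\mathcal{N})=(\overline{N},\overline{V})$ is a decorated representation of $(\mathcal{F}_{S}(\widetilde{M}),\widetilde{P})$.

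I expect the difficulty to be concentrated in the three families of new arrows, and above all in the arrows $[bra]$: this is exactly where the ``$\hat{k},\hat{k}$-part'' of $\overline{N}$---governed by the change-of-arrows isomorphism $\rho$ through Lemma \ref{lem3}---must be matched against the ``triangular part'' encoded by $\overline{N}(a^{\ast})$, $\overline{N}({}^{\ast}b)$, $\alpha$, $\beta$ and $\gamma$. Carrying the products $t\lambda$ of basis elements of $D_{k}$ correctly through these computations, without the convenience of a semi-multiplicative basis, is precisely what the hypotheses \ref{2.1}--\ref{2.3} (together with \ref{2.4}, Remark \ref{rem3} and Lemma \ref{lem2}) are designed for, and the normalizing scalar $c_{k}^{-1}$ appearing in \ref{4.3} must conspire with the finite sum over $L(k)$ to produce exactly the cancellation with $Y_{[bra]}(P)_{N}$.
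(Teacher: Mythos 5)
Your proposal is correct and follows essentially the same route as the paper: a case-by-case check of $(X_{c^{\ast}}(\widetilde{P}))_{\overline{N}}=0$ over the four families of the $Z$-local basis of $(\widetilde{M})_{0}$, using Lemma \ref{lem3} for the $\hat{k},\hat{k}$-part, the derivative formulas for $\widetilde{P}$ from \cite[p.58]{2} (which you rederive by expanding $\Delta$ rather than citing), the identity $X_{[bra]^{\ast}}(\rho(P))=\rho(Y_{[bra]}(P))$, Lemma \ref{lem2} to absorb the $r^{-1}$, and Lemma \ref{lem1} together with the splitting data for the cases $a^{\ast}$ and $^{\ast}b$. The only differences are presentational (your sketch of the last two cases is compressed, but the ingredients you name are exactly those the paper uses).
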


\begin{proof} We have to verify that $\overline{N}$ is annihilated by $R(\widetilde{P})$. It suffices to check that $(X_{c^{\ast}}(\widetilde{P}))_{\overline{N}}=0$ for each element $c$ of the $Z$-local basis of $(\widetilde{M})_{0}$. We proceed by cases.  \\

$\bullet$ Suppose first that $c \in T \cap \bar{e_{k}}M_{0}\bar{e_{k}}$ and let $n \in N$. Then by Lemma \ref{lem3}
\begin{align*}
\biggl(X_{c^{\ast}}(\widetilde{P})_{\overline{N}}\biggr)(n)&=X_{c^{\ast}}(\widetilde{P})n \\
&=X_{c^{\ast}}(\rho(P))n \\
&=\rho(X_{c^{\ast}}(P))n \\
&=X_{c^{\ast}}(P)n
\end{align*}

Since $\mathcal{N}=(N,V)$ is a decorated representation of $(\mathcal{F}_{S}(M),P)$ then $X_{c^{\ast}}(P)n=0$. \\

$\bullet$ Suppose now that $c=\rho(bra)$ where $b \in T_{k}$, $r \in L(k)$ and $a \in _{k}T$. By \cite[p.58]{2} we have the following equality: \\

\begin{center}
$X_{[bra]^{\ast}}(\widetilde{P})=X_{[bra]^{\ast}}(\rho(P))+c_{k}a^{\ast}r^{-1}(^{\ast}b)$
\end{center}

where $c_{k}=[D_{k}:F]$. \\

We now compute the image of the operator $(c_{k}a^{\ast}r^{-1}(^{\ast}b))_{\overline{N}}$. Let $n \in N_{\sigma(b)}$, then remembering \ref{4.3}, \ref{4.4} and Lemma \ref{lem2} we obtain
\begin{align*}
c_{k}\overline{N}(a^{\ast})r^{-1}\overline{N}(^{\ast}b)(n)&=c_{k}\overline{N}(a^{\ast})r^{-1}\left(-\pi_{1}p\xi_{be_{k}}(n),-\gamma'\xi_{be_{k}}(n),0,0\right)\\
&=c_{k}\overline{N}(a^{\ast})\left(-r^{-1}\pi_{1}p\xi_{be_{k}}(n),-r^{-1}\gamma' \xi_{be_{k}}(n),0,0\right) \\
&=c_{k}c_{k}^{-1} \pi_{e_{k}a}i \left (-r^{-1}\gamma' \xi_{be_{k}}(n)\right) \\
&=-\pi_{e_{k}a}\left(r^{-1}\gamma' \xi_{be_{k}}(n)\right) \\
&=-\pi_{ra}\left(\gamma' \xi_{be_{k}}(n)\right) \\
&=-\gamma_{ra,be_{k}}(n) \\
&=-\displaystyle \sum_{w \in L(k)} r^{\ast}(e_{k}^{-1}w)Y_{[bwa]}(P)n \\
&=-\displaystyle \sum_{w \in L(k)} r^{\ast}(w)Y_{[bwa]}(P)n \\
&=-Y_{[bra]}(P)n
\end{align*}
and by Lemma \ref{lem3}
\begin{align*}
\bigl(X_{[bra]^{\ast}}(\rho(P)\bigr)_{\overline{N}}(n)&=X_{[bra]^{\ast}}(\rho(P))n \\
&=\rho(Y_{[bra]}(P))n \\
&=Y_{[bra]}(P)n
\end{align*}
Combining the above: $(X_{[bra]}(\widetilde{P}))_{\overline{N}}=\bigl(X_{[bra]^{\ast}}(\rho(P))\bigr)_{\overline{N}}+(c_{k}a^{\ast}r^{-1}(^{\ast}b))_{\overline{N}}=0$, as desired. It remains to show that $R(\widetilde{P}) \cdot \overline{N}=\{0\}$ for the remaining elements of the $Z$-local basis of $(\widetilde{M})_{0}$. 
We now show that $(X_{a^{\ast}}(\widetilde{P}))_{\overline{N}}=0$ for each $a \in _{k}T$. Using the result of page \cite[p.58]{2} we have that
\begin{align*}
(X_{a^{\ast}}(\widetilde{P}))_{\overline{N}}&=\biggl(c_{k}\displaystyle \sum_{b \in T_{k}, r \in L(k)} r^{-1}(^{\ast}b)\rho(bra)\biggr)_{\overline{N}} \\
&=c_{k} \displaystyle \sum_{b \in T_{k},r \in L(k)} (r^{-1}(^{\ast}b)\rho(bra))_{\overline{N}}
\end{align*}
Let $n \in N_{\tau(a)}$. Then remembering \ref{4.4} and \ref{3.15}, we get the following equalities 
\begin{align*}
X_{a^{\ast}}(\widetilde{P})(n)&=c_{k}\displaystyle \sum_{b \in T_{k},r \in L(k)} r^{-1}(^{\ast}b)(bran) \\
&=c_{k} \displaystyle \sum_{b \in T_{k}, r \in L(k)} r^{-1} \left( -\pi_{1}p\xi_{be_{k}}(bran), -\gamma' \xi_{be_{k}}(bran),0,0\right) \\
&=-c_{k} \left ( \displaystyle \sum_{b \in T_{k}, r \in L(k)} r^{-1} \pi_{1}p \xi_{be_{k}}(bran), \displaystyle \sum_{b \in T_{k},r \in L(k)} r^{-1} \gamma' \xi_{be_{k}}(bran),0,0\right) \\
&=-c_{k}\left ( \displaystyle \sum_{b \in T_{k}, r \in L(k)} \pi_{1}p r^{-1} \xi_{be_{k}}(bran), \displaystyle \sum_{b \in T_{k}, r \in L(k)} \gamma' r^{-1} \xi_{be_{k}}(bran),0,0\right) \\
&=-c_{k} \left( \displaystyle \sum_{b \in T_{k}, r \in L(k)} \pi_{1} p \xi_{br} \pi_{br} \beta(an), \displaystyle \sum_{b \in T_{k}, r \in L(k)} \gamma' \xi_{br} \pi_{br} \beta(an),0,0\right) \\
&=-c_{k} \left( \pi_{1}\beta(an),\gamma' \beta(an),0,0\right) \\
&=\left(0,0,0,0\right)
\end{align*}

by Lemma \ref{lem1}. This proves that $(X_{a^{\ast}}(\widetilde{P}))_{\overline{N}}=0$ for each $a \in _{k}T$. Finally, let us show that $(X_{^{\ast}b}(\widetilde{P}))_{\overline{N}}=0$ for each $b \in T_{k}$. Let us recall the following formula from \cite[p.58]{2}: 
\begin{center}
$X_{^{\ast}(b)}(\widetilde{P})=c_{k}\displaystyle \sum_{a \in _{k}T,r \in L(k)} \rho(bra)a^{\ast}r^{-1}$
\end{center}
Now let $n \in N_{k}$. Then remembering \ref{4.3} and using Lemma \ref{lem2} we get \\
\begin{align*}
X_{^{\ast}b}(\widetilde{P})&=c_{k}\displaystyle \sum_{a \in _{k}T, r \in L(k)} \rho(bra) \overline{N}(a^{\ast})(r^{-1}n)\\
&=c_{k}\displaystyle \sum_{a \in _{k}T, r \in L(k)} \rho(bra) \overline{N}(a^{\ast})\left( \displaystyle \sum_{l=1}^{4} J_{l} \Pi_{l}(r^{-1}n)\right) \\
&=c_{k}\displaystyle \sum_{a \in _{k}T, r \in L(k)} \left( \rho(bra)c_{k}^{-1}\pi_{e_{k}a}i \Pi_{2}(r^{-1}n)+\rho(bra)c_{k}^{-1}\pi_{e_{k}a}j' \sigma_{2} \Pi_{3}(r^{-1}n) \right) \\
&=\displaystyle \sum_{a \in _{k}T, r \in L(k)} \rho(bra)\pi_{e_{k}a}i \Pi_{2}(r^{-1}n) + \displaystyle \sum_{a \in T_{k}, r \in L(k)} \rho(bra) \pi_{e_{k}a}j' \sigma_{2} \Pi_{3}(r^{-1}n)  \\
&=b \displaystyle \sum_{a \in _{k}T, r \in L(k)} ra \pi_{e_{k}a}\left(r^{-1} \Pi_{2}(n)\right)  + b \displaystyle \sum_{a \in _{k}T, r \in L(k)} ra \pi_{e_{k}a} \left(r^{-1} \sigma_{2} \Pi_{3}(n)\right)\\
&=b \displaystyle \sum_{a \in _{k}T, r \in L(k)} ra \pi_{ra}(\Pi_{2}n)+ b \displaystyle \sum_{a \in _{k}T, r \in L(k)} ra \pi_{ra}\left(\sigma_{2}\Pi_{3}(n)\right)\\
&=b \displaystyle \sum_{a \in _{k}T, r \in L(k)} \alpha \xi_{ra} \pi_{ra}(\Pi_{2}n) + b \displaystyle \sum_{a \in _{k}T, r \in L(k)} \alpha \xi_{ra} \pi_{ra} \left(\sigma_{2}\Pi_{3}(n)\right) \\
&=b \alpha(\Pi_{2}n)+b \alpha \left(\sigma_{2} \Pi_{3}(n)\right) \\
&=0
\end{align*}
by Lemma \ref{lem1}. This completes the proof that $\overline{N}$ is annihilated by $R(\widetilde{P})$.
\end{proof}

\begin{definition}
We will refer to $\widetilde{\mu_{k}}(\mathcal{N})=(\overline{N},\overline{V})$ as the premutated decorated representation.
\end{definition}

As in \cite[Proposition 10.9]{6} we now show that the isoclass of the premutated decorated representation does not depend on the choice of the splitting data.

\begin{prop} The isoclass of the decorated representation $\widetilde{\mu_{k}}(\mathcal{N})=(\overline{N},\overline{V})$ does not depend on the choice of the splitting data.
\end{prop}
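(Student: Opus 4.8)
The plan is to show that two choices of splitting data produce isomorphic decorated representations by constructing an explicit isomorphism that is the identity on all components $\overline{N}_i$ with $i \neq k$ and on the decoration $\overline{V}$, and which is a carefully chosen automorphism on $\overline{N}_k$. Write $(p,\sigma_2)$ and $(\hat p,\hat\sigma_2)$ for the two choices of splitting data, giving rise to module structures $\overline{N}$ and $\hat{\overline{N}}$ on the same underlying graded vector space (since the spaces $\ker\gamma/\operatorname{im}\beta$, $\operatorname{im}\gamma$, $\ker\alpha/\operatorname{im}\gamma$, $V_k$ do not depend on the splitting data). The only arrows whose action changes are the $\overline{N}(a^\ast)$ for $a \in {}_kT$ and the $\overline{N}({}^\ast b)$ for $b \in T_k$, since by Lemma \ref{lem3} the action of $\rho(u)$ for $u \in \mathcal{F}_S(M)_{\hat k,\hat k}$ and the action $\rho(bra)_{\overline N}=(bra)_N$ are defined without reference to $p$ or $\sigma_2$.

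First I would record how the relevant operators differ. Since $pj = \hat p j = \operatorname{id}_{\ker\gamma}$, the difference $\hat p - p$ has image in $\ker(\pi_1) \cap (\text{stuff})$... more precisely $j \circ(\text{nothing})$; one checks $\pi_1(\hat p - p)$ and $\gamma'(\hat p - p)$ are the quantities controlling the change in $\overline{N}({}^\ast b)$, while $\pi_2\sigma_2 = \pi_2\hat\sigma_2 = \operatorname{id}$ forces $\hat\sigma_2 - \sigma_2$ to take values in $\operatorname{im}\gamma = \operatorname{im}(\gamma') $, which is where the change in $\overline{N}(a^\ast)$ comes from (via $\pi_{e_k a} j' (\hat\sigma_2 - \sigma_2)$). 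So there are two "elementary" transition maps: $\theta_1 \colon \ker\alpha/\operatorname{im}\gamma \to \operatorname{im}\gamma$ built from $\hat\sigma_2 - \sigma_2$, and $\theta_2 \colon \operatorname{im}\gamma \to \ker\gamma/\operatorname{im}\beta$ built from $\hat p - p$ (together with a possible $\operatorname{im}\gamma$-to-$\operatorname{im}\gamma$ correction from $\gamma'(\hat p - p)$, which one absorbs). I then define $\Psi \colon \overline{N}_k \to \hat{\overline{N}}_k$ as the block-triangular map which is the identity on each summand and has $\theta_1$, $\theta_2$ (and the induced composite $\theta_2\theta_1$) as the off-diagonal entries, chosen so that $\Psi$ intertwines the old and new actions of $\overline{N}(a^\ast)$ and $\overline{N}({}^\ast b)$. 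Because $\Psi$ is unipotent block-triangular it is automatically an $F$-linear (indeed $D_k$-linear) isomorphism.

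The verification then has two parts. One must check $\overline{N}(a^\ast) = \widehat{\overline{N}}(a^\ast)\circ\Psi$ and $\Psi\circ\overline{N}({}^\ast b) = \widehat{\overline{N}}({}^\ast b)$ for all $a \in {}_kT$, $b \in T_k$ (the identities for arrows not touching vertex $k$, and for the $\rho(bra)$ arrows, are trivial since $\Psi$ is the identity on $\overline{N}_i$, $i\neq k$, and those operators do not see the splitting data by Lemma \ref{lem3}); and one must check $\Psi$ respects the decomposition $\overline{N}_k = \ker\gamma/\operatorname{im}\beta \oplus \operatorname{im}\gamma \oplus \ker\alpha/\operatorname{im}\gamma \oplus V_k$ compatibly with $\widehat{\overline{N}}_k$ — which is immediate by construction — and that $(\Psi, \operatorname{id}_{\overline{V}})$ together with the identity on the other vertices forms a genuine morphism of $\mathcal{F}_S(\widetilde{M})$-modules, using that both $\overline{N}$ and $\hat{\overline{N}}$ are killed by $R(\widetilde P)$ so it suffices to check compatibility with the generators of $\widetilde{M}_0$. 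The main obstacle I expect is bookkeeping: pinning down the exact formulas for $\theta_1,\theta_2$ in terms of $\hat p-p$ and $\hat\sigma_2-\sigma_2$ and the maps $\pi_{e_k a}, i, j', \xi_{be_k}, \pi_1, \gamma'$ so that the intertwining identities hold on the nose, in particular making sure the $\operatorname{im}\gamma \to \operatorname{im}\gamma$ and $\ker\gamma/\operatorname{im}\beta \to \ker\gamma/\operatorname{im}\beta$ diagonal blocks of the two module structures really do agree after conjugation — this is where one uses $\gamma\beta = 0$, $\alpha\gamma = 0$ (Lemma \ref{lem1}) and Lemma \ref{lem2} to cancel the cross terms, exactly as in \cite[Proposition 10.9]{6}.
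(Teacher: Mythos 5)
Your proposal is correct and follows essentially the same route as the paper: both identify that two retractions differ by a map $\xi\gamma'$ with $\xi\colon\operatorname{im}(\gamma)\to\ker(\gamma)$ and two sections differ by a map $\eta\colon\ker(\alpha)/\operatorname{im}(\gamma)\to\operatorname{im}(\gamma)$, assemble these into a unipotent block-triangular automorphism of $\overline{N}_k$ (identity elsewhere), and verify the two intertwining identities for $\overline{N}(a^{\ast})$ and $\overline{N}({}^{\ast}b)$. The only cosmetic difference is that the paper's matrix has $0$ in the $(1,3)$ entry rather than a composite correction, the cancellation there coming from $i$ and $j'$ agreeing on $\operatorname{im}(\gamma)\subseteq\ker(\alpha)$.
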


\begin{proof}

Suppose that we fix $p: N_{out} \rightarrow ker(\gamma)$ such that $pj =id_{ker(\gamma)}$ where $j: ker(\gamma) \rightarrow N_{out}$ is the inclusion map. Let $p': N_{out} \rightarrow ker(\gamma)$ be another map satisfying $p'j=id_{ker(\gamma)}$. 
Then the restriction of the map $p'-p$ to the subspace $ker(\gamma)$ is the zero map. Since $\gamma: N_{out} \rightarrow N_{in}$ then $N_{out}/ker(\gamma) \cong im(\gamma)$. \\

Consider the following sequence of maps:

\vspace{0.2in}

\begin{center}
$ker(\gamma) \stackrel{j}{\longrightarrow} N_{out} \stackrel{\gamma}{\longrightarrow}  N_{out}/ker(\gamma) \cong im(\gamma) $
\end{center}

\vspace{0.2in}

By the universal property of the cokernel of $j$, there exists a unique linear map $\xi: im(\gamma) \rightarrow ker(\gamma)$ making the following diagram commute
\begin{equation*}
\xymatrix{%
ker(\gamma)\ar[r]^{j} &N_{out} \ar[r]^{\gamma'} \ar[d]^{p'-p}  &im(\gamma) \ar@{-->}[ld]^\xi \\
&ker(\gamma)}
\end{equation*}

It follows that $p'=p+\xi \gamma'$ for some linear map $\xi: im(\gamma) \rightarrow ker(\gamma)$. \\

Now suppose that we fix a map $\sigma_{2}: ker(\alpha)/im(\gamma) \rightarrow ker(\alpha)$ such that $\pi_{2}\sigma_{2}=id_{ker(\alpha)/im(\gamma)}$. Let $\sigma'_{2}: ker(\alpha)/im(\gamma) \rightarrow ker(\alpha)$ be another map satisfying $\pi_{2}\sigma'_{2}=id_{ker(\alpha)/im(\gamma)}$. By the universal property of the kernel of $\pi_{2}$, there exists a unique linear map $\eta: ker(\alpha)/im(\gamma) \rightarrow im(\gamma)$ making the following diagram commute
\begin{center}
\[\xymatrixcolsep{5pc}\xymatrix{%
ker(\alpha)/im(\gamma) \ar@{-->}[d]^\eta \ar[rd]^{\sigma'_{2}-\sigma_{2}} & \\ 
im(\gamma) \ar[r] & ker(\alpha) \ar[r]^{\pi_{2}} & ker(\alpha)/im(\gamma)}
\]
\end{center}

\vspace{0.2in}

Thus $\sigma'_{2}=\sigma_{2} + \eta$ for some linear map $\eta: ker(\alpha)/im(\gamma) \rightarrow im(\gamma)$. \\

Let $\overline{N'}(a^{\ast})$ be the map in \ref{4.3} with $\sigma_{2}$ replaced by $\sigma_{2}'$. Similarly, let $\overline{N'}(^{\ast}b)$ be the map in $\ref{4.4}$ with $p$ replaced by $p'$.\\

As in \cite[Proposition 10.9]{6} we now construct a linear automorphism $\psi: \overline{N}_{k} \rightarrow \overline{N'}_{k}$ such that $\overline{N}(a^{\ast})=\overline{N'}(a^{\ast})\psi$ and $\psi \overline{N}(^{\ast}b)=\overline{N'}(^{\ast}b)$. Since  $\overline{N}_{k}=\frac{ker(\gamma)}{im(\beta)} \oplus im(\gamma) \oplus \frac{ker(\alpha)}{im(\gamma)} \oplus V_{k}$, then we may realize $\psi$ as a matrix of order $4$. Define $\psi$ as
\begin{center}
$\psi=
\begin{pmatrix}
I & \pi_{1}\xi & 0 &0 \\
0 & I & -\eta & 0 \\
0 & 0 & I & 0 \\
0 & 0 & 0 & I
\end{pmatrix}$
\end{center}
where $I$ is the identity transformation. Note that $\psi$ is invertible. We have
\begin{center}
\begin{align*}
\overline{N'}(a^{\ast})\psi&=\begin{pmatrix} 0 & c_{k}^{-1}\pi_{e_{k}a}i & c_{k}^{-1}\pi_{e_{k}a}j' \sigma_{2}' & 0 \end{pmatrix} \begin{pmatrix}
I & \pi_{1} \xi & 0 &0 \\
0 & I & -\eta & 0 \\
0 & 0 & I & 0 \\
0 & 0 & 0 & I
\end{pmatrix} \\    
&=\begin{pmatrix} 0 & c_{k}^{-1}\pi_{e_{k}a}i & -c_{k}^{-1}\pi_{e_{k}a}i\eta+c_{k}^{-1}\pi_{e_{k}a}j'\sigma_{2}' & 0 \end{pmatrix} \\
&=\begin{pmatrix} 0 & c_{k}^{-1}\pi_{e_{k}a}i & c_{k}^{-1}\pi_{e_{k}a}(-i\eta+j'\sigma_{2}') & 0 \end{pmatrix} \\
&=\begin{pmatrix} 0 & c_{k}^{-1}\pi_{e_{k}a}i & c_{k}^{-1}\pi_{e_{k}a}(-i\eta+j'\sigma_{2}+j'\eta) & 0 \end{pmatrix} \\
&=\begin{pmatrix} 0 & c_{k}^{-1}\pi_{e_{k}a}i & c_{k}^{-1}\pi_{e_{k}a}j'\sigma_{2} & 0 \end{pmatrix} \\
&=\overline{N}(a^{\ast})
\end{align*}
\end{center}

On the other hand:
\begin{align*}
\psi \overline{N}(^{\ast}b)&=\begin{pmatrix} -\pi_{1}(p+\xi \gamma')\xi_{be_{k}} \\ -\gamma' \xi_{be_{k}} \\ 0 \\ 0 \end{pmatrix} \\
&=\begin{pmatrix} -\pi_{1}p'\xi_{be_{k}} \\ -\gamma' \xi_{be_{k}} \\ 0 \\ 0 \end{pmatrix} \\
&=\overline{N'}(^{\ast}b)
\end{align*}

Now let $\varphi: \overline{N} \rightarrow \overline{N}'$ be the map defined as $\varphi_{j}=id$ if $j \neq k$ and $\varphi_{k}=\psi$. Suppose first that $a \in _{k}T$, $d_{1} \in D_{\tau(a)}$, $d_{2} \in D_{k}$ and $w \in \overline{N}_{k}$. Then
\begin{align*}
\varphi(d_{1}a^{\ast}d_{2}w)&=d_{1}a^{\ast}d_{2}w \\
&=d_{1}\overline{N}(a^{\ast})(d_{2}w) \\
&=d_{1}\overline{N'}(a^{\ast})\psi(d_{2}w) \\
&=d_{1}a^{\ast}d_{2}\varphi(w) 
\end{align*}

Now if $b \in T_{k}$, $d_{1} \in D_{k}$, $d_{2} \in D_{\sigma(b)}$ and $n \in N_{\sigma(b)}$. Then

\begin{align*}
\varphi(d_{1}(^{\ast}b)d_{2}n)&=\psi(d_{1}(^{\ast}b)d_{2}n) \\
&=\psi(d_{1}\overline{N}(^{\ast}b)(d_{2}n)) \\
&=d_{1}\psi(\overline{N}(^{\ast}b)(d_{2}n)) \\
&=d_{1}\overline{N'}(^{\ast}b)(d_{2}n) \\
&=d_{1}(^{\ast}b)d_{2}\varphi(n)
\end{align*}

Therefore for each $u \in \mathcal{F}_{S}(\mu_{k}M)$ we obtain a commutative diagram:
\begin{center}
\begin{equation*}
\xymatrix{%
\overline{N} \ar[r]^{u_{\overline{N}}} \ar[d]^{\varphi} & \overline{N} \ar[d]^{\varphi} \\
\overline{N'} \ar[r]^{u_{\overline{N'}}} & \overline{N'}}
\end{equation*}
\end{center}

This proves that the decorated representations $\widetilde{\mu_{k}}(\mathcal{N})=(\overline{N},\overline{V})$ and $(\overline{N'},\overline{V})$ are right-equivalent, as desired.

\end{proof}

Let $\mathcal{N}=(N,V)$ be a decorated representation of $(\mathcal{F}_{S}(M),P)$ and let $\mathcal{N}'=(N',V')$ be a decorated representation of $(\mathcal{F}_{S}(M'),P')$. Suppose that such representations are right-equivalent, then there exists an algebra isomorphism $\varphi: \mathcal{F}_{S}(M) \rightarrow \mathcal{F}_{S}(M')$ such that $\varphi(P)$ is cyclically equivalent to $P'$. By \cite[Theorem 5.3]{2} we have: $R(P')=R(\varphi(P))=\varphi(R(P))$. Using the representation $\mathcal{N}=(N,V)$ we construct a decorated representation $\widehat{\mathcal{N}}=(\widehat{N},V)$ of $(\mathcal{F}_{S}(M'),\varphi(P))$ as follows: let  $\widehat{N}=N$ as $F$-vector spaces and given $u \in \mathcal{F}_{S}(M')$ and $n \in N$ define $u \ast n:=\varphi^{-1}(u)n$. Clearly $R(P')\widehat{N}=0$. We will denote $\widehat{N}$ by $\widehat{N}=^{\varphi^{-1}}N$.

\begin{prop} \label{prop5} Let $\varphi: \mathcal{F}_{S}(M) \rightarrow \mathcal{F}_{S}(M)$ be a unitriangular automorphism and let $\mathcal{N}=(N,V)$ be a decorated representation of $(\mathcal{F}_{S}(M),P)$ where $P$ is a potential in $\mathcal{F}_{S}(M)$ such that $e_{k}Pe_{k}=0$. Then: 

\begin{enumerate}[(a)]
\item There exists a unitriangular automorphism $\hat{\varphi}: \mathcal{F}_{S}(\mu_{k}M) \rightarrow \mathcal{F}_{S}(\mu_{k}M)$ such that $\hat{\varphi}(\mu_{k}P)$ is cyclically equivalent to $\mu_{k}(\varphi(P))$.
\item There exists an isomorphism of decorated representations $\psi: \widetilde{\mu_{k}}(\mathcal{N}) \rightarrow \widetilde{\mu_{k}}(\widehat{\mathcal{N}})$.
\end{enumerate}
\end{prop}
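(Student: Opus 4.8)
The plan is to follow the strategy of \cite[Proposition 10.9]{6} (and its completed-tensor-algebra refinement in \cite{2}), treating the two parts in order, with part (a) providing the algebra automorphism that part (b) must be compatible with.

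For part (a), I would first recall from \cite{2} the explicit description of the premutation $\mu_k$ on automorphisms. Because $\varphi$ is unitriangular, write $\varphi(c) = c + (\text{higher order terms})$ for each $c$ in the $Z$-free generating set $T$ of $M$, and use that $\mu_k M = \bar e_k M \bar e_k \oplus Me_kM \oplus (e_kM)^\ast \oplus {}^\ast(Me_k)$ with the $Z$-generating set $(\widetilde M)_0$ described in the excerpt. The automorphism $\hat\varphi$ should act as $\rho \circ \varphi \circ \rho^{-1}$ on the block $\mathcal{F}_S(M)_{\hat k,\hat k}$ (recall $\rho$ from Lemma \ref{lem3}), and on the dual summands $(e_kM)^\ast$ and ${}^\ast(Me_k)$ by the transpose/inverse-transpose of the linear part of $\varphi$ restricted to $e_kM$ and $Me_k$ respectively (this is forced by requiring $\hat\varphi$ to preserve the canonical pairing that enters $\widetilde P$); on $Me_kM$ it is determined by multiplicativity once its value on $Me_k \otimes_S S e_k \otimes_S e_k M$ is fixed. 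One then checks $\hat\varphi$ is again unitriangular (its linear part is the identity since $\varphi$'s is) and that $\hat\varphi(\mu_k P) - \mu_k(\varphi(P))$ is a sum of commutators, i.e. cyclically equivalent to zero: this reduces, via the formula $\widetilde P = [P] + \sum [btsa]((sa)^\ast)({}^\ast(bt))$, to the compatibility of $\rho$ with $\varphi$ on the $[P]$-part (which is \cite{2}) and to the fact that the pairing term is preserved because $\hat\varphi$ was defined by transposition on the dual factors. The main subtlety here is bookkeeping the "new arrows" $[bra]$: one must verify that the coupling term in $\widetilde P$ transforms correctly, which is exactly where the hypothesis $e_k P e_k = 0$ (equivalently $(Me_k\otimes_S M)_{cyc}=0$ after one step) is used to avoid spurious cyclic terms.

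For part (b), recall that $\widehat{\mathcal N} = {}^{\varphi^{-1}}\mathcal N$ has the same underlying space $N$ with twisted action $u \ast n = \varphi^{-1}(u)n$. The key observation is that the maps $\alpha, \beta, \gamma$ attached to $\widehat{\mathcal N}$ (built from $Y_{[bwa]}(\varphi(P))$ and the multiplication operators $(bra)_{\widehat N}$) differ from those of $\mathcal N$ by the linear isomorphism induced by $\varphi$ on the relevant $N_i$'s and on $N_{in}, N_{out}$; since $\varphi$ is unitriangular, this induced isomorphism is itself unitriangular (identity plus nilpotent) on $N_{in}$ and $N_{out}$. Consequently it carries $\ker\gamma, \operatorname{im}\gamma, \ker\alpha, \operatorname{im}\beta, \operatorname{im}\gamma$ for one representation onto the corresponding subspaces for the other, and hence induces an isomorphism on each of the four summands of $\overline N_k$ (and the identity on the $V_k$-summand). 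Bundling these with the identity on $\overline N_i = N_i$ for $i\neq k$ gives a candidate $\psi$; one then checks it intertwines the $\mathcal F_S(\mu_k M)$-actions, i.e. $\psi \circ u_{\widetilde{\mu_k}(\mathcal N)} = \hat\varphi(u)_{\widetilde{\mu_k}(\widehat{\mathcal N})} \circ \psi$. By multiplicativity it suffices to check this on generators of $(\widetilde M)_0$, i.e. on the operators $\overline N(a^\ast)$, $\overline N({}^\ast b)$, $(\rho(bra))_{\overline N}$, and $c_{\overline N}$ for $c \in \bar e_k M_0 \bar e_k$; for these one plugs in the defining formulas \ref{4.3}, \ref{4.4}, Lemma \ref{lem3}, and the description of $\hat\varphi$ from part (a) and matches terms. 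There is a coherence point: the splitting data $(p,\sigma_2)$ for $\mathcal N$ must be transported via $\varphi$ to splitting data for $\widehat{\mathcal N}$, but by the preceding Proposition the isoclass of $\widetilde{\mu_k}$ is independent of that choice, so this is harmless.

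The main obstacle I anticipate is part (b)'s verification that $\psi$ intertwines the action of the "new" generators $\overline N(a^\ast)$ and $\overline N({}^\ast b)$: these are defined piecewise through the chosen splittings $p$ and $\sigma_2$, and the higher-order corrections in $\varphi$ (hence in $\hat\varphi$) produce correction terms that must be absorbed either into the choice of transported splitting or into an adjustment of $\psi$ by a unitriangular matrix on $\overline N_k$, exactly as in the splitting-independence Proposition above. I would handle this by first proving the statement when $\varphi$ differs from the identity only in the lowest nontrivial degree (so the correction is a single computable term), then passing to the general unitriangular $\varphi$ by writing it as a convergent composite of such elementary ones and using continuity/nilpotency of the $\mathcal F_S(M)$-action on $N$ — the same device used to upgrade the tensor-algebra formulas to $\mathcal F_S(M)$ throughout Section \ref{sec3}.
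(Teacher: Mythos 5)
Your overall architecture for part (b) matches the paper's: transport maps on $N_{in}$ and $N_{out}$ built from the coefficients of $\varphi$ on $e_kM$ and $Me_k$, induced isomorphisms on the four summands of $\overline{N}_k$, transported splitting data, and a generator-by-generator intertwining check. Part (a) in the paper is simply a citation of \cite[Theorem 8.12]{2}, so your sketch of how to build $\hat\varphi$ is more than is needed there.

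The genuine gap is in what you call "the key observation": that $\hat\alpha$, $\hat\beta$, $\hat\gamma$ differ from $\alpha$, $\beta$, $\gamma$ by the isomorphisms induced by $\varphi$. For $\alpha$ and $\beta$ this is a short computation (unwinding $u\ast n=\varphi^{-1}(u)n$), but for $\gamma$ it is not a formal consequence of $\varphi$ being an algebra automorphism, because $\gamma$ is built from the operators $Y_{[bwa]}(P)$, which are derivative-type expressions in the potential, and one must compare $\varphi^{-1}\bigl(Y_{[bwa]}(\varphi(P))\bigr)$ acting on $N$ with $\sum Y_{[b_1ha_1]}(P)$ conjugated by the transport maps $C$ and $D$ (note these are two \emph{different} maps, acting on opposite sides: the correct relation is $\hat\gamma=C\gamma D$, not conjugation by a single isomorphism). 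Establishing this identity is the bulk of the paper's proof: it requires the compatibility of $\rho$ with the derivation $\Delta$ (the paper's Lemmas \ref{lem5}--\ref{lem7}), a vanishing argument showing that the contributions of the generators in $\bar{e_k}M_0\bar{e_k}$ and various higher-order correction terms lie in $R(P)$ and hence annihilate $N$ (Lemma \ref{lem8} and steps (i)--(ii)), and repeated use of the dual-basis identities \ref{2.1}--\ref{2.4}. Your plan gives no indication of how this identity would be proved; asserting it as an observation, or hoping to absorb the discrepancy into a re-choice of splitting data, does not work because the discrepancy lives in $\gamma$ itself (hence in $\ker\gamma$ and $\operatorname{im}\gamma$), not merely in the sections $p$ and $\sigma_2$. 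Your fallback of factoring $\varphi$ into elementary unitriangular pieces does not remove the difficulty either: even for an elementary $\varphi$ one must still control how $Y_{[bwa]}$ transforms modulo $R(P)$, which is the same computation in a special case.
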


\begin{proof} The fact that (a) holds is an immediate consequence of \cite[Theorem 8.12]{2}. Let us show (b). Let $\hat{\alpha},\hat{\beta}$ and $\hat{\gamma}$ be the maps associated to the representation $\widehat{N}$. Recall that $_{k}\hat{T}=\{sa: a \in _{k}T, s \in L(k)\}$ is a local basis for $(e_{k}M)_{S}$. We have that $\varphi(sa)=\displaystyle \sum_{r \in L(k), a_{1} \in _{k}T} ra_{1}C_{ra_{1},sa}$ for some $C_{ra_{1},sa} \in e_{\tau(a_{1})}\mathcal{F}_{S}(M)e_{\tau(a)}$. \\

Define $C: N_{in} \rightarrow N_{in}$ as the $F$- linear map such that for all $r,s \in L(k)$, $a,a_{1} \in _{k}T$, the map:

\begin{center}
$\pi_{ra_{1}}C\xi_{sa}: N_{\tau(a)} \rightarrow N_{\tau(a_{1})}$
\end{center}

is given by
\begin{center}
$\pi_{ra_{1}}C\xi_{sa}(n)=\varphi^{-1}(C_{ra_{1},sa})n$
\end{center}

for every $n \in N_{\tau(a)}$. Let us show that $\hat{\alpha}C=\alpha$. It suffices to show that for all $a \in _{k}T, r \in L(k)$ we have $\hat{\alpha}C\xi_{ra}=\alpha \xi_{ra}$. \\

In what follows, given $h \in \mathcal{F}_{S}(M)$ and $n \in N$ then $h \ast n=\varphi^{-1}(h)n$ denotes the product in $\widehat{N}$. \\

We have
\begin{align*}
\hat{\alpha}C\xi_{ra}(n)&=\displaystyle \sum_{s \in L(k), a_{1} \in _{k}T} \hat{\alpha}\xi_{sa_{1}}\pi_{sa_{1}}C\xi_{ra}(n) \\
&=\displaystyle \sum_{s \in L(k), a_{1} \in _{k}T} \hat{\alpha} \xi_{sa_{1}}\left(\varphi^{-1}(C_{sa_{1},ra})n\right) \\
&=\displaystyle \sum_{s \in L(k), a_{1} \in _{k}T} sa_{1} \ast \varphi^{-1}(C_{sa_{1},ra})n \\
&=\varphi^{-1} \left( \displaystyle \sum_{s \in L(k), a_{1} \in _{k}T} sa_{1}C_{sa_{1},ra}\right)n \\
&=\varphi^{-1}(\varphi(ra))n \\
&=ran \\
&=\alpha \xi_{ra}(n)
\end{align*}

and therefore $\hat{\alpha}C=\alpha$. This yields the following equalities:

\begin{equation} \label{4.5}
\begin{split}
\operatorname{ker}(\alpha)&=C^{-1}(\operatorname{ker}(\hat{\alpha})) \\
\operatorname{im}(\alpha)&=\operatorname{im}(\hat{\alpha})
\end{split}
\end{equation}

Similarly, for each $b \in T_{k}$ and $s \in L(k)$:

\begin{center}
$\varphi(bs)=\displaystyle \sum_{r \in L(k), b_{1} \in T_{k}} D_{bs,b_{1}r}b_{1}r$
\end{center}

for some $D_{bs,b_{1}r} \in e_{\sigma(b)}\mathcal{F}_{S}(M)e_{\sigma(b_{1})}$. \\

Thus there exists an $F$-linear map $D: N_{out} \rightarrow N_{out}$ such that for all $r,s \in L(k)$, $b,b_{1} \in T_{k}$ we have:

\begin{center}
$\pi_{bs}D \xi_{b_{1}r}(n)=\varphi^{-1}(D_{bs,b_{1}r})n$
\end{center}

for every $n \in N_{\sigma(b_{1})}$. We now show that $D\hat{\beta}=\beta$. It suffices to show that for all $b \in T_{k}$, $s \in L(k)$ we have $\pi_{bs}D\hat{\beta}=\pi_{bs}\beta$. Let $n \in N_{k}$, then
\begin{align*}
\pi_{bs}D\hat{\beta}(n)&=\displaystyle \sum_{r \in L(k),b_{1} \in T_{k}} \pi_{bs}D\xi_{b_{1}r}\pi_{b_{1}r} \hat{\beta}(n) \\
&=\displaystyle \sum_{r \in L(k), b_{1} \in T_{k}} \varphi^{-1}(D_{bs,b_{1}r})((b_{1}r) \ast n) \\
&=\displaystyle \sum_{r \in L(k), b_{1} \in T_{k}} \varphi^{-1}(D_{bs,b_{1}r})\varphi^{-1}(b_{1}r)n \\
&=\varphi^{-1} \left(\displaystyle \sum_{r \in L(k), b_{1} \in T_{k}} D_{bs,b_{1}r} b_{1}r\right)n \\
&=\varphi^{-1}(\varphi(bs))n \\
&=bsn \\
&=\pi_{bs}\beta(n)
\end{align*}

Therefore $D\hat{\beta}=\beta$, as claimed. Then we obtain the following equalities

\begin{equation}  \label{4.6}
\begin{split}
\operatorname{im}(\beta)&=D(\operatorname{im}(\hat{\beta})) \\
\operatorname{ker}(\beta)&=\operatorname{ker}(\hat{\beta})
\end{split}
\end{equation}

\begin{lemma} \label{lem4} We have that $\hat{\gamma}=C \gamma D$.
\end{lemma}

\begin{proof}

Using \cite[Lemma 9.2]{2} we obtain an algebra isomorphism: \\

\begin{center}
$\rho: \mathcal{F}_{S}(M)_{\hat{k},\hat{k}} \rightarrow \mathcal{F}_{S}((\mu_{k}M)_{\hat{k},\hat{k}})$
\end{center}

\vspace{0.1in}

we may view $\rho$ as a monomorphism of algebras: \\

\begin{center}
$\rho: \bar{e_{k}}\mathcal{F}_{S}(M)\bar{e_{k}} \rightarrow \bar{e_{k}} \mathcal{F}_{S}(\mu_{k}M) \bar{e_{k}}$
\end{center}

\vspace{0.1in}

By \cite[Proposition 8.11]{2} we have algebra isomorphisms:

\begin{align*}
&\phi: \mathcal{F}_{S}(\widehat{M}) \rightarrow \mathcal{F}_{S}(\widehat{M}) \\
&\hat{\varphi}: \mathcal{F}_{S}(\mu_{k}M) \rightarrow \mathcal{F}_{S}(\mu_{k}M)
\end{align*}

where $\widehat{M}:=M \oplus (e_{k}M)^{\ast} \oplus ^{\ast} (Me_{k})$ and
\begin{align*}
&i_{M}: \mathcal{F}_{S}(M) \rightarrow \mathcal{F}_{S}(\widehat{M}) \\
&i_{\mu_{k}M}: \mathcal{F}_{S}(\mu_{k}M) \rightarrow \mathcal{F}_{S}(\widehat{M})
\end{align*}

are the inclusion maps. We also have commutative diagrams

\begin{center}
\begin{equation*}
\xymatrix{%
\mathcal{F}_{S}(\mu_{k}M) \ar[r]^{\hat{\varphi}} \ar[d]^{i_{\mu_{k}M}} & \mathcal{F}_{S}(\mu_{k}M) \ar[d]^{i_{\mu_{k}M}} \\
\mathcal{F}_{S}(\widehat{M}) \ar[r]^{\phi} & \mathcal{F}_{S}(\widehat{M})}
\end{equation*}
\end{center}

\begin{center}
\begin{equation*}
\xymatrix{%
\mathcal{F}_{S}(M) \ar[r]^{\varphi} \ar[d]^{i_{M}} & \mathcal{F}_{S}(M) \ar[d]^{i_{M}} \\
\mathcal{F}_{S}(\widehat{M}) \ar[r]^{\phi} & \mathcal{F}_{S}(\widehat{M})}
\end{equation*}
\end{center}

Let us see that the previous diagram induces a commutative diagram:
\begin{center}
\begin{equation*}
\xymatrix{%
\mathcal{F}_{S}(M)_{\hat{k},\hat{k}} \ar[r]^{\rho} \ar[d]^{\varphi} & \mathcal{F}_{S}((\mu_{k}M)_{\hat{k},\hat{k}}) \ar[d]^{\hat{\varphi}} \\
\mathcal{F}_{S}(M)_{\hat{k},\hat{k}} \ar[r]^{\rho} & \mathcal{F}_{S}((\mu_{k}M)_{\hat{k},\hat{k}})}
\end{equation*}
\end{center}

Indeed, on one hand $i_{\mu_{k}M}\rho \varphi = i_{M}\varphi$ and on the other hand:
\begin{center}
$i_{\mu_{k}M}\hat{\varphi}\rho=\phi i_{\mu_{k}M} \rho = \phi i_{M}=i_{M} \varphi$
\end{center}

Since $i_{\mu_{k}M}$ is injective then $\rho \varphi=\hat{\varphi}\rho$. \\

Let $\hat{\Delta}: T_{S}(\mu_{k}M) \rightarrow T_{S}(\mu_{k}M) \otimes_{Z} T_{S}(\mu_{k}M)$ be the derivation associated to $T_{S}(\mu_{k}M)$. Define maps:
\begin{align*}
\rho^{k}: \bar{e_{k}}T_{S}(M) \rightarrow T_{S}(\mu_{k}M) \\
^{k}\rho: T_{S}(M)\bar{e_{k}} \rightarrow T_{S}(\mu_{k}M)
\end{align*}

as follows, $\rho^{k}(z)=\rho(z\bar{e_{k}})$ and $^{k}\rho(z)=\rho(\bar{e_{k}}z)$.

\begin{lemma} \label{lem5} For $z \in T_{S}(M)_{\hat{k},\hat{k}}$ we have that $\hat{\Delta}\rho(z)=(\rho^{k} \otimes (^{k}\rho))\Delta(z)$.
\end{lemma}

\begin{proof} The $T_{S}(\mu_{k}M)$-bimodule $T_{S}(\mu_{k}M) \otimes_{Z} T_{S}(\mu_{k}M)$ is a $T_{S}(M)_{\hat{k},\hat{k}}$-bimodule via the map $\rho$. We have that $\hat{\Delta}$ is a $T_{S}(M)_{\hat{k},\hat{k}}$-derivation, $\rho^{k} \otimes (^{k} \rho)$ is a map of $T_{S}(M)_{\hat{k},\hat{k}}$-bimodules and $\Delta$ is a derivation of $T_{S}(M)$. Therefore $\hat{\Delta}\rho$ and $(\rho^{k} \otimes (^{k}\rho))\Delta$ are derivations of $T_{S}(\mu_{k}M)$. Since $T_{S}(M)_{\hat{k},\hat{k}}$ is generated, as an $F$-algebra, by $\bar{e_{k}}S$, $\bar{e_{k}}M_{0}\bar{e_{k}}$ and $M_{0}D_{k}M_{0}$, then it suffices to establish the equality for $z \in \bar{e_{k}}S \cup \bar{e_{k}}M_{0}\bar{e_{k}} \cup M_{0}D_{k}M_{0}$. \\

If $z \in \bar{e_{k}}S$, then
\begin{center}
$\hat{\Delta}\rho(z)=1 \otimes \rho(z) - \rho(z) \otimes 1 = \bar{e_{k}} \otimes \rho(z) - \rho(z) \otimes \bar{e_{k}}=(\rho^{k} \otimes (^{k}\rho))\Delta(z)$
\end{center}

For $z \in \bar{e_{k}}M_{0}\bar{e_{k}}$ we have
\begin{center}
$\hat{\Delta}\rho(z)=1 \otimes \rho(z)=\bar{e_{k}} \otimes \rho(z)=(\rho^{k} \otimes (^{k}\rho))\Delta(z)$
\end{center}

If $z=m_{1}rm_{2}$ where $m_{1} \in \bar{e_{k}}M_{0}e_{k}$, $r \in D_{k}$ and $m_{2} \in e_{k}M_{0}\bar{e_{k}}$, then
\begin{center}
$\hat{\Delta}\rho(m_{1}rm_{2})=1 \otimes \rho(m_{1}rm_{2})=\bar{e_{k}} \otimes \rho(m_{1}rm_{2})$
\end{center}

and
\begin{align*}
(\rho^{k} \otimes ^{k} \rho)\Delta(m_{1}rm_{2})&=(\rho^{k} \otimes (^{k}\rho))(\Delta(m_{1})rm_{2}+m_{1}\Delta(rm_{2})) \\
&=(\rho^{k} \otimes ^{k} \rho)(1 \otimes m_{1}rm_{2}+m_{1} \otimes rm_{2}) \\
&=\bar{e_{k}} \otimes \rho(m_{1}rm_{2}) + m_{1}\bar{e_{k}} \otimes (^{k}\rho(rm_{2})) \\
&=\bar{e_{k}} \otimes \rho(m_{1}rm_{2})
\end{align*}

completing the proof of Lemma \ref{lem5}.
\end{proof}

\begin{lemma} \label{lem6} For $\alpha \in T_{S}(M)_{\hat{k},\hat{k}}$, $z \in \mathcal{F}_{S}(M)_{\hat{k},\hat{k}}$ we have

\begin{center}
$((\rho^{k} \otimes (^{k}\rho))\Delta(\alpha)) \Diamond \rho(z)=\rho^{k}(\Delta(\alpha) \Diamond z)$
\end{center}
\end{lemma}

\begin{proof} One can verify that if the equality holds for all $\alpha$ and for all $z$, and for all $\beta$ and every $z$ then it holds for all $\alpha \beta$ and every $z$. Therefore it suffices to establish the equality for $\alpha \in \bar{e_{k}}S \cup \bar{e_{k}}M_{0}\bar{e_{k}} \cup M_{0}D_{k}M_{0}$. \\
(i) Suppose first that $\alpha \in \bar{e_{k}}S$, then
\begin{align*}
(\rho^{k} \otimes (^{k}\rho))\Delta(\alpha) \Diamond \rho(z)&=(\rho^{k} \otimes (^{k}\rho))(1 \otimes \alpha - \alpha \otimes 1) \Diamond \rho(z) \\
&=(\bar{e_{k}} \otimes \rho(\alpha)-\rho(\alpha) \otimes \bar{e_{k}}) \Diamond \rho(z) \\
&=cyc(\rho(\alpha)\rho(z)-\rho(z)\rho(\alpha)) \\
&=\rho(cyc(\alpha z - z\alpha)) \\
&=\rho^{k}(\Delta(\alpha) \Diamond z)
\end{align*}

(ii) If $\alpha \in \bar{e_{k}}M_{0}\bar{e_{k}}$, then
\begin{align*}
(\rho^{k} \otimes (^{k}\rho))\Delta(\alpha) \Diamond \rho(z)&=(\rho^{k} \otimes (^{k}\rho))(1 \otimes \alpha) \Diamond \rho(z) \\
&=(\bar{e_{k}} \otimes p(\alpha)) \Diamond \rho(z) \\
&=cyc(\rho(\alpha)\rho(z)) \\
&=\rho(cyc(\alpha z)) \\
&=\rho^{k}(\Delta(\alpha) \Diamond z)
\end{align*}

(iii) Finally, if $\alpha=m_{1}rm_{2}$ where $m_{1} \in \bar{e_{k}}M_{0}e_{k}$, $r \in D_{k}$ and $m_{2} \in e_{k}M_{0}\bar{e_{k}}$, then
\begin{align*}
(\rho^{k} \otimes (^{k}\rho))\Delta(m_{1}rm_{2}) \Diamond \rho(z) &= (\rho^{k} \otimes (^{k}\rho))(1 \otimes m_{1}rm_{2}+m_{1} \otimes rm_{2}) \Diamond \rho(z) \\
&=(\bar{e_{k}} \otimes \rho(m_{1}rm_{2})) \Diamond \rho(z) \\
&=cyc(\rho(m_{1}rm_{2}z)) \\
&=\rho^{k}(\Delta(m_{1}rm_{2}) \Diamond z)
\end{align*}
\end{proof}

Lemma \ref{lem5} and Lemma \ref{lem6} imply immediately the following

\begin{lemma} \label{lem7} Let $\alpha \in T_{S}(M)_{\hat{k},\hat{k}}$, $z \in \mathcal{F}_{S}(M)_{\hat{k},\hat{k}}$, then:

\begin{center}
$\hat{\Delta}\rho(\alpha) \Diamond \rho(z)=\rho^{k}(\Delta(\alpha) \Diamond z)$
\end{center}

\end{lemma}

\vspace{0.1in}

Let $r,s \in L(k)$, $a \in _{k}T$ and $b \in T_{k}$. Let $n \in \hat{N}_{\sigma(b)}$, then
\begin{center}
$\hat{\gamma}_{sa,br}(n)=\displaystyle \sum_{w \in L(k)} s^{\ast}(r^{-1}w)\varphi^{-1} \left(Y_{[bwa]}(\varphi(P)) \right)n$.
\end{center}

We have
\begin{align*}
\varphi^{-1} \left( Y_{[bwa]}(\varphi(P)) \right)n&= \varphi^{-1} \rho^{-1} \left( X_{[bwa]^{\ast}}(\rho \varphi(P)) \right)n \\
&=\left(\rho^{-1} \hat{\varphi}^{-1} X_{[bwa]^{\ast}}(\hat{\varphi}(\rho(P))) \right)n
\end{align*}

Also $X_{[bwa]^{\ast}}(\hat{\varphi}(\rho(P))=\displaystyle \lim_{u \to \infty} Z_{u}$ where:

\begin{center}
$Z_{u}=\displaystyle \sum_{s \in L(\sigma(b))} \displaystyle \sum_{c \in \tilde{T}_{\hat{k},\hat{k}}} (s\rho(bwa))^{\ast} \left ( \hat{\Delta}(\hat{\varphi}(c)^{\leq u+1}) \Diamond \hat{\varphi}(X_{c^{\ast}}(\rho(P)) \right)s$
\end{center}

Let $v$ be an arbitrary positive integer and let $\alpha,\beta \in \mathcal{F}_{S}(M)$. We will write $\alpha \equiv \beta \ (v)$ if $\alpha-\beta \in \mathcal{F}_{S}(M)^{>v}$. \\

Clearly for any $\alpha \in \mathcal{F}_{S}(M)$ and for every positive integer $v$ we have $\alpha \equiv \alpha^{\leq v} \ (v)$.  \\

Note that if $h \in T_{S}(M)^{>v}$ and $z \in \mathcal{F}_{S}(M)$, then $\Delta(h) \Diamond z \in \mathcal{F}_{S}(M)^{>v}$. Therefore if $\alpha,\beta \in T_{S}(M)$ and $\alpha \equiv \beta \ (v)$, then $\Delta(\alpha) \Diamond z \equiv \Delta(\beta) \Diamond z \ (v)$ for every $z \in \mathcal{F}_{S}(M)$. \\

Let $h \in \mathcal{F}_{S}(M)_{\hat{k},\hat{k}}$. Let us see that:

\begin{center}
$\rho(h^{\leq 2v+3}) \equiv \rho(h)^{\leq v+1} \ (v+2)$
\end{center}

Indeed, since $h-h^{\leq 2v+3} \in \mathcal{F}_{S}(M)^{\geq 2v+4}$ then $\rho(h)-\rho(h^{\leq 2v+3}) \in \mathcal{F}_{S}(\mu_{k}M)^{\geq v+2}$, whence $\rho(h) \equiv \rho(h^{\leq 2v+3}) \ (v+2)$ and therefore $\rho(h) \equiv \rho(h)^{\leq v+1} \ (v+2)$. \\

Let $v \gg 0$ be such that $\mathcal{F}_{S}(M)^{\geq v}N=0$. \\

For $i \geq 1$ we have that $Z_{v+i}-Z_{v} \in \mathcal{F}_{S}(\mu_{k}M)^{\geq v+1}$ and thus
\begin{center}
$\varphi^{-1}\rho^{-1}(Z_{v+i}) \equiv \varphi^{-1} \rho^{-1}(Z_{v}) \ (v+1)$
\end{center}

therefore
\begin{center}
$\displaystyle \lim_{u \to \infty} \varphi^{-1} \rho^{-1}(Z_{u})n=\varphi^{-1}\rho^{-1}(Z_{v})n$
\end{center}

Define:
\begin{center}
$W(c)=\varphi^{-1}\rho^{-1} \left( \displaystyle \sum_{s \in L(\sigma(b))} (s\rho(bwa))^{\ast}\left( \hat{\Delta}(\hat{\varphi}(c)^{\leq v+1}) \Diamond \hat{\varphi}(X_{c^{\ast}}(\rho(P)))\right)s\right)n$.
\end{center}

We require the following:

\begin{lemma} \label{lem8} If $c \in \bar{e_{k}}M_{0}\bar{e_{k}} \cap T$ then $W(c)=0$. 
\end{lemma}

\begin{proof} Note that $X_{c^{\ast}}(\rho(P))=\rho(X_{c^{\ast}}(P))$, hence\\
\begin{center}
$\hat{\varphi}(c)^{\leq v+1} =\hat{\varphi}(\rho(c))^{\leq v+1}=\rho(\varphi(c))^{\leq v+1} \equiv \rho(\varphi(c)^{\leq 2v+3})  \ (v+2)$
\end{center}

Consequently
\begin{align*}
W(c)&=\varphi^{-1}\rho^{-1} \left( \displaystyle \sum_{s \in L(\sigma(b))} (s\rho(bwa))^{\ast} \left( \hat{\Delta}(\rho({\varphi}(c)^{\leq 2v+3})) \Diamond \hat{\varphi}\rho(X_{c^{\ast}}(P))\right)s\right)n \\
&=\varphi^{-1}\rho^{-1} \left( \displaystyle \sum_{s \in L(\sigma(b))} (s\rho(bwa))^{\ast}\left( \hat{\Delta}(\rho({\varphi}(c)^{\leq 2v+3})) \Diamond \rho{\varphi}(X_{c^{\ast}}(P))\right)s\right)n \\
\end{align*}

Using Lemma \ref{lem7} we obtain
\begin{center}
$W(c)=\varphi^{-1}\rho^{-1} \left( \displaystyle \sum_{s \in L(\sigma(b))} (s\rho(bwa))^{\ast} \left(\rho^{k} \left( \Delta(\varphi(c)^{\leq 2v+3}) \Diamond \varphi(X_{c^{\ast}}(P)) \right)\right)s\right)n$
\end{center}

Letting $z=\varphi(X_{c^{\ast}}(P))$ yields that $W(c)$ is a sum of elements of the form:

\begin{center}
$\varphi^{-1}\rho^{-1} \left( \displaystyle \sum_{s \in L(\sigma(b))} (s\rho(bwa))^{\ast} \left(\rho^{k} \left(\Delta(m_{1}\hdots m_{l}r) \Diamond z\right)\right)s\right)n$
\end{center}

where $m_{1},\hdots,m_{l} \in SM_{0}$ and $r \in \bar{e_{k}}S$.\\ 

Then
\begin{center}
$\displaystyle \sum_{s \in L(\sigma(b))} (s\rho(bwa))^{\ast} \left(\rho^{k} \left(\Delta(m_{1}\hdots m_{l}r) \Diamond z\right)\right)s$
\end{center}

is equal to
\begin{center}
$\displaystyle \sum_{s \in L(\sigma(b))} (s\rho(bwa))^{\ast} \left(\rho^{k} \left(\Delta(m_{1}\hdots m_{l})r \Diamond z\right)\right)s+g$
\end{center}

where
\begin{align*}
g&=\displaystyle \sum_{s \in L(\sigma(b))} (s\rho(bwa))^{\ast} \left(\rho^{k} \left(m_{1}\hdots m_{l}\Delta(r) \Diamond z\right)\right)s \\
&=\displaystyle \sum_{s \in L(\sigma(b))} (s\rho(bwa))^{\ast}\left(r\rho(cyc(\bar{e_{k}}zm_{1}\hdots m_{l}))-\rho(cyc(\bar{e_{k}}zm_{1}\hdots m_{l}))r\right)s
\end{align*}

By \cite[Lemma 5.2]{2} we obtain that $g=0$. Therefore:

\begin{center}
$W(c)=\varphi^{-1}\rho^{-1} \left( \displaystyle \sum_{s \in L(\sigma(b))} (s\rho(bwa))^{\ast} \left( \rho^{k} \left(\Delta(m_{1}\hdots m_{l}) \Diamond rz\right)\right)s\right)n$
\end{center}

then $W(c)$ is a sum of elements of the form:

\begin{center}
$\varphi^{-1}\rho^{-1}\left( (s\rho(bwa))^{\ast}(\rho(\bar{e_{k}}m_{i}\hdots m_{l}rzm_{1}\hdots m_{i-1}))s\right)n$
\end{center}

If $i=l$ then $z=\displaystyle \sum_{i} z_{i}z_{i'}$ where $z_{i} \in \bar{e_{k}}M$ and in this case: \\

\begin{center}
$(s\rho(bwa))^{\ast}(\rho(\bar{e_{k}}m_{l}r\bar{e_{k}}zm_{1}\hdots m_{l-1}))=(s\rho(bwa))^{\ast}(\rho(\bar{e_{k}}m_{l}r)\rho(\bar{e_{k}}zm_{1}\hdots m_{l-1}\bar{e_{k}}))=0$
\end{center}

Hence $W(c)$ is a sum of elements of the form:

\begin{center}
$\varphi^{-1}\rho^{-1}\left((s\rho(bwa))^{\ast}(\rho(\bar{e_{k}}m_{i}m_{i+1})\rho(\alpha z \beta))s\right)n$
\end{center}

where $m_{i}m_{i+1} \in Me_{k}M$ and thus $W(c)$ is a sum of elements of the form $\varphi^{-1}(\alpha)\varphi^{-1}(z)\varphi^{-1}(\beta)n$. Since $\varphi^{-1}(z)=X_{c^{\ast}}(P)$ then $W(c) \in R(P)N=0$, completing the proof of Lemma \ref{lem8}. \\
\end{proof}

From the above we obtain the following formula:

\begin{center}
$(\ast): \varphi^{-1}(Y_{[bwa]}(\varphi(P)))n=\varphi^{-1}\rho^{-1}(Z_{v}')n$
\end{center}

where:
\begin{align*}
Z_{v}'&=\displaystyle \sum_{s \in L(\sigma(b))} \displaystyle \sum_{b_{1} \in T_{k}, r \in L(k), a_{1} \in _{k}T} (s\rho(bwa))^{\ast} \left( \hat{\Delta}(\hat{\varphi}(\rho(b_{1}ra_{1})))^{\leq v+1} \Diamond \hat{\varphi}(X_{\rho(b_{1}ra_{1})}(\rho(P)))\right)s \\
&=\displaystyle \sum_{s \in L(\sigma(b))} \displaystyle \sum_{b_{1}ra_{1}} (s\rho(bwa))^{\ast} \left( \hat{\Delta}(\hat{\varphi}(\rho(b_{1}ra_{1})))^{\leq v+1} \Diamond \hat{\varphi}\rho \left(Y_{[b_{1}ra_{1}]}(P)\right)\right)s  \\
&=\displaystyle \sum_{s \in L(\sigma(b))} \displaystyle \sum_{b_{1}ra_{1}} (s\rho(bwa))^{\ast} \left( \hat{\Delta}(\rho\varphi(b_{1}ra_{1}))^{\leq v+1} \Diamond \rho\varphi \left(Y_{[b_{1}ra_{1}]}(P)\right)\right)s 
\end{align*}

Thus, in $(\ast)$ we may replace $Z_{v}'$ by the term:
\begin{center}
$\displaystyle \sum_{b_{1}ra_{1}} \displaystyle \sum_{s \in L(\sigma(b))} (s\rho(bwa))^{\ast}\left( \hat{\Delta}(\rho({\varphi}(b_{1}ra_{1})^{\leq 2v+3})) \Diamond \rho{\varphi}(Y_{[b_{1}ra_{1}]}(P))\right)s$
\end{center}

which by Lemma \ref{lem7} is equal to:

\begin{center}
$\displaystyle \sum_{b_{1}ra_{1}} \displaystyle \sum_{s \in L(\sigma(b))} (s\rho(bwa))^{\ast}\left( \rho^{k} \left(\Delta(\varphi(b_{1}ra_{1})^{\leq 2v+3}) \Diamond \varphi(Y_{[b_{1}ra_{1}]}(P))\right)\right)s$
\end{center}

the latter term can be replaced by:

\begin{center}
$\displaystyle \sum_{b_{1}ra_{1}} \displaystyle \sum_{s \in L(\sigma(b))} (s\rho(bwa))^{\ast}\left( \rho^{k} \left(\Delta(\varphi(b_{1}r)^{\leq 2v+3}\varphi(a_{1})^{\leq 2v+3}) \Diamond \varphi(Y_{[b_{1}ra_{1}]}(P))\right)\right)s$
\end{center}

which in turn can be replaced by $S=S_{1}+S_{2}$, where:

\begin{align*}
S_{1}&=\displaystyle \sum_{s \in L(\sigma(b))} \displaystyle \sum_{b_{1}ra_{1}} (s\rho(bwa))^{\ast} \left( \rho^{k} \left(\Delta(\varphi(b_{1})^{\leq 2v+3}) \Diamond \varphi(ra_{1}Y_{[b_{1}ra_{1}]}(P))\right)\right)s \\
S_{2}&= \displaystyle \sum_{s \in L(\sigma(b))}  \displaystyle \sum_{b_{1}ra_{1}} (s\rho(bwa))^{\ast} \left( \rho^{k} \left(\Delta(\varphi(a_{1})^{\leq 2v+3}) \Diamond \varphi(Y_{[b_{1}ra_{1}]}(P)b_{1}r)\right)\right)s
\end{align*}

Using \ref{3.7} gives that:

\begin{center}
$S_{2}= \displaystyle \sum_{s \in L(\sigma(b))}  \displaystyle \sum_{a_{1}} (s\rho(bwa))^{\ast} \left( \rho^{k} \left(\Delta(\varphi(a_{1})^{\leq 2v+3}) \Diamond \varphi(X_{a_{1}^{\ast}}(P))\right)\right)s$
\end{center}

whence:

\begin{center}
$\varphi^{-1}(Y_{[bwa]}(\varphi(P)))n=\varphi^{-1}\rho^{-1}(S_{1})n+\varphi^{-1}\rho^{-1}(S_{2})n$
\end{center}

(i) Let us see that $\varphi^{-1}\rho^{-1}(S_{2}) \subseteq R(P)$. \\

Let $z=\varphi(X_{a_{1}^{\ast}}(P))$, then $\varphi(a_{1})^{\leq 2v+3}$ is a sum of elements of the form $m_{1} \hdots m_{l}t$ where $m_{j} \in SM_{0}$, $t \in \bar{e_{k}}S$. Therefore $S_{2}$ is a sum of elements of the form: 
\begin{center}
$(s\rho(bwa))^{\ast}(\rho(\bar{e_{k}}m_{i}\hdots m_{l}tzm_{1} \hdots m_{i-1})\bar{e_{k}})s$
\end{center}

If $i=l$, then $(s\rho(bwa))^{\ast}(\rho(\bar{e_{k}}m_{i}\hdots m_{l}\hdots m_{i-1})\bar{e_{k}})s$ is equal to:

\begin{center}
$(s\rho(bwa))^{\ast}(\rho(\bar{e_{k}}m_{l}\bar{e_{k}})\rho(\bar{e_{k}}tzm_{1}\hdots m_{l-1})\bar{e_{k}})s=0$
\end{center}

If $i<l$ then: 

\begin{center}
$(s\rho(bwa))^{\ast}(\rho(\bar{e_{k}}m_{i}\hdots m_{l}tzm_{1}\hdots m_{i-1})\bar{e_{k}})s=(s\rho(bwa))^{\ast}(\rho(\bar{e_{k}}m_{i}\hdots m_{l})\rho(tzm_{1}\hdots m_{i-1}\bar{e_{k}}))s$
\end{center}

Therefore $S_{2}$ is a sum of elements of the form $\rho(\alpha z \beta)$, so $\varphi^{-1} \rho^{-1}(S_{2})$ is a sum of elements of the form $\varphi^{-1}(\alpha)\varphi^{-1}(z)\varphi^{-1}(\beta)$. Since $\varphi^{-1}(z)=X_{a_{1}^{\ast}}(P)$ then $\varphi^{-1}\rho^{-1}(S_{2}) \subseteq R(P)$ which establishes (i). \\

From the above it follows that $\varphi^{-1}(Y_{[bwa]}(\varphi(P)))n=\varphi^{-1}\rho^{-1}(S_{1})n$. \\

(ii) Let us show that $\varphi^{-1}\rho^{-1}(S_{1})=\nu_{1}+\nu_{2}$, where $\nu_{1} \in R(P)$ and $\nu_{2}$ is a sum of elements of the form: \\
\begin{center}
$\displaystyle \sum_{s \in L(\sigma(b))} \displaystyle \sum_{ra_{1}} (s\rho(bwa))^{\ast} \rho(\bar{e_{k}}m_{i} \hdots m_{l}z_{r_{a_{1}}}m_{1} \hdots m_{i-1}\bar{e_{k}})s$
\end{center}

where $z_{r_{a_{1}}}=\varphi(ra_{1}Y_{[b_{1}ra_{1}]}(P))$.

Note that $\varphi(b_{1})^{\leq 2v+3}$ is a sum of elements of the form $m_{1}m_{2} \hdots m_{l}$ where $m_{1},\hdots,m_{l-1} \in SM_{0}$ and $m_{l} \in \bar{e_{k}}Me_{k}$. The element $\varphi(b_{1})$ is a sum of elements of the form $m_{1} \hdots m_{l}$ where $m_{1},\hdots,m_{l-1} \in SM_{0}$ and $m_{l} \in \bar{e_{k}}Me_{k}$. Then $\varphi^{-1}\rho^{-1}(S_{1})$ lies in the $F$-vector space generated by $\varphi^{-1}\rho^{-1}(T_{i})$ where $T_{i}$ is the $F$-vector space generated by elements of the form:
\begin{center}
$u_{i}=\displaystyle \sum_{s \in L(\sigma(b))} \displaystyle \sum_{ra_{1}} (s\rho(bwa))^{\ast} \rho(\bar{e_{k}}m_{i} \hdots m_{l}z_{r_{a_{1}}}m_{1} \hdots m_{i-1}\bar{e_{k}})s$
\end{center}

Let us show that if $i<l$ then $\varphi^{-1}\rho^{-1}(T_{i}) \subseteq R(P)$. We have that
\begin{center}
$u_{i}=\displaystyle \sum_{s \in L(\sigma(b))} (s\rho(bwa))^{\ast} \left(\rho(\bar{e_{k}}m_{i} \hdots m_{l-1})\right)\rho(m_{l}w_{b_{1}}m_{1} \hdots m_{i-1}\bar{e_{k}})s$
\end{center}

where $w_{b_{1}}=\varphi(X_{b_{1}^{\ast}}(P))$. \\

It follows that $u_{i}$ is a sum of elements of the form $\rho(\alpha w_{b_{1}}\beta)$ and thus $\varphi^{-1}\rho^{-1}(u_{i})$ is a sum of elements of the form $\varphi^{-1}(\alpha)X_{b_{1}^{\ast}}(P)\varphi^{-1}(\beta)$, as was to be shown. This completes the proof of (ii). \\

We have that
\begin{center}
$\varphi(b_{1})=\displaystyle \sum_{b'r'}D_{b_{1},b'r'}b'r'$
\end{center}

then
\begin{center}
$\varphi(b_{1})^{\leq 2v+3}=\displaystyle \sum_{b'r'} (D_{b_{1},b'r'})^{\leq 2v+2}b'r'$
\end{center}

Also
\begin{align*}
\varphi(ra_{1})&=\displaystyle \sum_{r''a'} r''a'C_{r''a',ra_{1}} \\
z_{r_{a_{1}}}&=\displaystyle \sum_{r''a'} r''a'C_{r''a',ra_{1}}\varphi(Y_{[b_{1}ra_{1}]}(P))
\end{align*}

On the other hand, $(D_{b_{1},b'r'})^{\leq 2v+2}$ is a sum of elements of the form $m_{1}m_{2}\hdots m_{l-1}s''$ where each $m_{i} \in SM_{0}$ and $s'' \in L(\sigma(b'))$. Therefore $\varphi(b_{1})^{\leq 2v+3}$ is a sum of elements of the form $m_{1}m_{2}\hdots m_{l-1}s''b'r'$. \\

In what follows, $z(b_{1}ra_{1})=\varphi(Y_{[b_{1}ra_{1}]})$. \\

We obtain that $\varphi^{-1}(Y_{[bwa]}(\varphi(P)))n$ is a sum of elements of the form:
\begin{center}
$(\ast): \displaystyle \sum_{b',s',s'',r',r''} \varphi^{-1}\rho^{-1}\left((s'\rho(bwa))^{\ast}(\rho(\bar{e_{k}}H))\right)$
\end{center}
where:
\begin{align*}
H&=s''b'r'r''a'C_{r''a',ra_{1}}z(b_{1}ra_{1})m_{1}\hdots m_{l-1}s'n \\
&=\displaystyle \sum_{w_{1} \in L(k)} s''b'w_{1}^{\ast}(r'r'')w_{1}a'C_{r''a',ra_{1}}z(b_{1}ra_{1})m_{1}\hdots m_{l-1}s'n
\end{align*}

The non-zero terms of $(\ast)$ are those with $s'=s''$, $b=b'$, $a'=a$, $w_{1}=w$. Thus: 
\begin{center}
$\varphi^{-1}(Y_{[bwa]}(\varphi(P)))n=\displaystyle \sum_{h,r',r'',a_{1},b_{1}}w^{\ast}(r'r'')\varphi^{-1}(C_{r''a,ha_{1}})Y_{[b_{1}ha_{1}]}(P)\varphi^{-1}(D_{b_{1},br'})n$
\end{center}
Therefore: 
\begin{align*}
\hat{\gamma}_{sa,br}(n)&=\displaystyle \sum_{w \in L(k)} s^{\ast}(r^{-1}w)\varphi^{-1}(Y_{[bwa]}(\varphi(P)))n\\
&=\displaystyle \sum_{w,h,r',r'',a_{1},b_{1}}s^{\ast}(r^{-1}w)w^{\ast}(r'r'')\varphi^{-1}(C_{r''a,ha_{1}})Y_{[b_{1}ha_{1}]}(P)\varphi^{-1}(D_{b_{1},br'})n\\
&=\displaystyle \sum_{w,h,r',r'',a_{1},b_{1}}s^{\ast}(r^{-1}ww^{\ast}(r'r''))\varphi^{-1}(C_{r''a,ha_{1}})Y_{[b_{1}ha_{1}]}(P)\varphi^{-1}(D_{b_{1},br'})n\\
&=\displaystyle \sum_{h,r',r'',a_{1},b_{1}}s^{\ast}\left(r^{-1}\displaystyle \sum_{w \in L(k)}w^{\ast}(r'r'')w\right)\varphi^{-1}(C_{r''a,ha_{1}})Y_{[b_{1}ha_{1}]}(P)\varphi^{-1}(D_{b_{1},br'})n \\
&=\displaystyle \sum_{h,r',r'',a_{1},b_{1}}s^{\ast}(r^{-1}r'r'')\varphi^{-1}(C_{r''a,ha_{1}})Y_{[b_{1}ha_{1}]}(P)\varphi^{-1}(D_{b_{1},br'})n
\end{align*}

By \cite[Proposition 8.1 (iii)]{2} and \cite[Proposition 8.2 (iii)]{2} the following equalities hold: 
\begin{align*}
\displaystyle \varphi^{-1}(C_{r''a,ha_{1}})&=\displaystyle \sum_{u \in L(\sigma(a))} (r'')^{\ast}(hu)\varphi^{-1}(C_{ua,a_{1}})
\\
\varphi^{-1}(D_{b_{1},br'})&=\displaystyle \sum_{v \in L(\tau(b_{1}))}(r')^{\ast}(v)\varphi^{-1}(D_{b_{1},bv})
\end{align*}

whence
\begin{align*}
\hat{\gamma}_{sa,br}(n)&=\displaystyle \sum_{h,r',r'',a_{1},b_{1},u,v}s^{\ast}(r^{-1}r'r'')(r'')^{\ast}(hu)\varphi^{-1}(C_{ua,a_{1}})Y_{[b_{1}ha_{1}]}(P)(r')^{\ast}(v)\varphi^{-1}(D_{b_{1},bv})n\\
&=\displaystyle \sum_{h,r',a_{1},b_{1},u,v} s^{\ast}(r^{-1}r'hu)(r')^{\ast}(v)\varphi^{-1}(C_{ua,a_{1}})Y_{[b_{1}ha_{1}]}(P)\varphi^{-1}(D_{b_{1},bv})n\\
&=\displaystyle \sum_{h,a_{1},b_{1},u,v} s^{\ast}(r^{-1}vhu)\varphi^{-1}(C_{ua,a_{1}})Y_{[b_{1}ha_{1}]}(P)\varphi^{-1}(D_{b_{1},bv})n
\end{align*}

On the other hand, using again \cite[Proposition 8.1 (iii)]{2} and \cite[Proposition 8.2 (iii)]{2} one gets that the $(sa,br)$-entry of $(C\gamma D)(n)$ is given by
\begin{center}
$\displaystyle \sum_{s',t',h,a_{1},b_{1},u,v} (s')^{\ast}((t')^{-1}h)s^{\ast}(s'u)r^{\ast}(vt')\varphi^{-1}(C_{ua,a_{1}})Y_{[b_{1}ha_{1}]}(P)\varphi^{-1}(D_{b_{1},bv})n$
\end{center}

Using \ref{2.4} one obtains the following:

\begin{align*}
\displaystyle \sum_{s',t',h,u,v} (s')^{\ast}((t')^{-1}h)s^{\ast}(s'u)r^{\ast}(vt')&=\displaystyle \sum_{s',t',h,u,v} s^{\ast} \left( (s')^{\ast}((t')^{-1}h)s'u\right)r^{\ast}(vt') \\
&=\displaystyle \sum_{t',h,u,v}s^{\ast}\left((t')^{-1}hu\right)r^{\ast}(vt') \\
&=\displaystyle \sum_{h,u,v}s^{\ast} \left(\displaystyle \sum_{t'} r^{\ast}(vt')(t')^{-1}hu\right)\\
&=\displaystyle \sum_{h,u,v} s^{\ast}(r^{-1}vhu)
\end{align*}

which implies that $\hat{\gamma}=C\gamma D$ and the proof of Lemma \ref{lem4} is now complete.
\end{proof}

Note that Lemma \ref{lem4} implies the following equalities:

\begin{equation} \label{4.7}
\begin{split}
\operatorname{ker}(\gamma)&=D(\operatorname{ker}(\hat{\gamma})) \\
\operatorname{im}(\gamma)&=C^{-1}(\operatorname{im}(\hat{\gamma}))
\end{split}
\end{equation}

We now complete the proof of Proposition \ref{prop5}. Let us establish a right-equivalence $\left( \hat{\varphi}, \psi, \eta\right)$ between the representations $\widetilde{\mu_{k}}(\mathcal{N})$ and $\widetilde{\mu_{k}}\left(\widehat{\mathcal{N}}\right)$. First, we define $\widehat{\varphi}: \mathcal{F}_{S}(\mu_{k}M) \rightarrow \mathcal{F}_{S}(\mu_{k}M)$ as the right-equivalence between the algebras $(\mathcal{F}_{S}(\mu_{k}M),\mu_{k}P)$ and $\left(\mathcal{F}_{S}(\mu_{k}M), \mu_{k}\varphi(P)\right)$ given by \cite[Theorem 8.12]{2}. Let $\widetilde{\mu_{k}}(\widehat{\mathcal{N}})=(\overline{\widehat{N}},\overline{\widehat{V}})$. If $i \neq k$, then  $\widehat{N}_{i}=N_{i}$ and: \\

\begin{center}
$\overline{\widehat{N}}_{k}=\frac{\operatorname{ker}(\hat{\gamma})}{\operatorname{im}(\hat{\beta})} \oplus \operatorname{im}(\hat{\gamma}) \oplus \frac{\operatorname{ker}(\hat{\alpha})}{\operatorname{im}(\hat{\gamma})} \oplus V_{k}$
\end{center}

\vspace{0.1in}

For each $i \in \{1,2,3,4\}$, let $\overline{J}_{i}$ and $\overline{\Pi}_{i}$ be the corresponding inclusions and projections associated to $\overline{\widehat{N}}_{k}$, analogous to those given in \ref{4.1} and \ref{4.2}. Then we have inclusion maps:

\begin{align*}
&\overline{j}: \operatorname{ker}(\hat{\gamma}) \rightarrow N_{out} \\
&\overline{i}: \operatorname{im}(\hat{\gamma}) \rightarrow N_{in}
\end{align*}

and projections:

\begin{align*}
&\overline{\pi}_{1}: \operatorname{ker}(\hat{\gamma}) \rightarrow \frac{\operatorname{ker}(\hat{\gamma})}{\operatorname{im}(\hat{\beta})} \\
&\overline{\pi}_{2}: \operatorname{ker}(\hat{\alpha})\rightarrow \frac{\operatorname{ker}(\hat{\alpha})}{\operatorname{im}(\hat{\gamma})}
\end{align*}

By Lemma \ref{lem4} we have $\hat{\gamma}=C\gamma D$ and thus $\hat{\gamma}D^{-1}=C\gamma$. It follows that $D^{-1}$ induces an isomorphism
\begin{center}
$D_{1}^{-1}: \operatorname{ker}(\gamma) \rightarrow \operatorname{ker}(\hat{\gamma})$
\end{center}

such that $\overline{j}D_{1}^{-1}=D^{-1}j$. Also, $D^{-1}$ maps $\operatorname{im}(\beta)$ to $\operatorname{im}(\hat{\beta})$. Therefore, $D^{-1}$ also induces an isomorphism
\begin{center}
$\underline{D}^{-1}:  \frac{\operatorname{ker}(\gamma)}{\operatorname{im}(\beta)} \rightarrow \frac{\operatorname{ker}(\hat{\gamma})}{\operatorname{im}(\hat{\beta})}$
\end{center}

\vspace{0.1in}

such that $\underline{D}^{-1}\pi_{1}=\overline{\pi}_{1}D_{1}^{-1}$. The isomorphism $C$ induces an isomorphism
\begin{center}
$C_{1}: \operatorname{im}(\gamma) \rightarrow \operatorname{im}(\hat{\gamma})$
\end{center}

such that $\overline{i}C_{1}=Ci$. The equality $\hat{\alpha}C=\alpha$ implies that $C$ also induces an isomorphism $C_{2}: \operatorname{ker}(\alpha) \rightarrow \operatorname{ker}(\hat{\alpha})$; thus there exists an isomorphism
\begin{center}
$\underline{C}: \frac{\operatorname{ker}(\alpha)}{\operatorname{im}(\gamma)} \rightarrow \frac{ \operatorname{ker}(\hat{\alpha})}{\operatorname{im}(\hat{\gamma})}$
\end{center}

such that $\underline{C}\pi_{2}=\overline{\pi}_{2}C_{2}$. \\

To construct $\widetilde{\mu_{k}}(\widehat{N})$ we choose splitting data as follows:

\begin{align*}
\overline{p}&=D_{1}^{-1}pD: N_{out} \rightarrow \operatorname{ker}(\hat{\gamma}) \\
\overline{\sigma}_{2}&=C_{2}\sigma_{2}\underline{C}^{-1}: \frac{\operatorname{ker}(\hat{\alpha})}{\operatorname{im}(\hat{\gamma})} \rightarrow \operatorname{ker}(\hat{\alpha})
\end{align*}

Note that $\overline{p}\overline{j}=id_{\operatorname{ker}(\hat{\gamma})}$, $\overline{\pi}_{2}\overline{\sigma}_{2}=id_{\operatorname{ker}(\hat{\alpha})/\operatorname{im}(\hat{\gamma})}$. Define: \\

\begin{center}
$\psi: \overline{N} \rightarrow \overline{\widehat{N}}$
\end{center}

as follows. If $i \neq k$ then $\psi_{i}: \overline{N}_{i}=N_{i} \rightarrow \overline{\widehat{N}}_{i}=N_{i}$ is the identity map and
\begin{center}
$\psi_{k}: \overline{N}_{k} \rightarrow \overline{\widehat{N}}_{k}$
\end{center}

\vspace{0.1in}

is the map such that for every $i \neq j$, $\overline{\Pi}_{i} \psi_{k}J_{j}=0$ and
\begin{align*}
\overline{\Pi}_{1}\psi_{k}J_{1}&=\underline{D}^{-1} \\
\overline{\Pi}_{2}\psi_{k}J_{2}&=C_{1} \\
\overline{\Pi}_{3}\psi_{k}J_{3}&=\underline{C} \\
\overline{\Pi}_{4}\psi_{k}J_{4}&=id_{V_{k}}
\end{align*}

Let us show that for every $z \in \widetilde{T}$ and $n \in N_{\tau(z)}$: 
\begin{center}
$\psi_{\sigma(z)}(zn)=\widehat{\varphi}(z)\psi_{\tau(z)}(n)$
\end{center}

\vspace{0.1in}

Suppose first that $z=a^{\ast}$ where $a \in _{k}T$. In this case $\tau(z)=\sigma(a)=k$ and $\sigma(z)=\tau(a) \neq k$. By \cite[Proposition 8.11]{2}: 
\begin{center}
$\widehat{\varphi}(a^{\ast})=\displaystyle \sum_{t \in L(k), a_{1} \in _{k}T} (C^{-1})_{a,ta_{1}} a_{1}^{\ast}t^{-1}$
\end{center}

whence
\begin{center}
$\overline{\widehat{N}}(\widehat{\varphi}(a^{\ast}))=\displaystyle \sum_{t \in L(k), a_{1} \in _{k}T} (C^{-1})_{a,ta_{1}} \ast \overline{\widehat{N}}(a_{1}^{\ast})t^{-1}$
\end{center}
where $\ast$ denotes the action of $\mathcal{F}_{S}(M)$ in $\widehat{N}$. In this case we have to verify the following equality:

\begin{equation} \label{4.8}
\overline{N}(a^{\ast})=\overline{\widehat{N}}(\widehat{\varphi}(a^{\ast}))\psi_{k}
\end{equation}
On one hand, $\overline{N}(a^{\ast})J_{1}=0$. On the other hand: 
\begin{align*}
\overline{\widehat{N}}(\widehat{\varphi}(a^{\ast}))\psi_{k}J_{1}&= \displaystyle \sum_{a_{1} \in _{k}T, t \in L(k)} (C^{-1})_{a,ta_{1}} \ast \overline{\widehat{N}}(a_{1}^{\ast})t^{-1}\psi_{k}J_{1} \\
&=\displaystyle \sum_{a_{1} \in _{k}T, t \in L(k)} (C^{-1})_{a,ta_{1}} \ast \overline{ \widehat{N}}(a_{1}^{\ast})t^{-1} \overline{J}_{1} \overline{\Pi}_{1} \psi_{k} J_{1} \\
&=\displaystyle \sum_{a_{1} \in _{k}T, t \in L(k)} (C^{-1})_{a,ta_{1}} \ast \overline{ \widehat{N}}(a_{1}^{\ast})\overline{J}_{1} t^{-1} \overline{\Pi}_{1} \psi_{k} J_{1} =0
\end{align*}

and thus $\overline{N}(a^{\ast})J_{1}=\overline{\widehat{N}}(\widehat{\varphi}(a^{\ast}))\psi_{k}J_{1}$. Now let us consider $\overline{N}(a^{\ast})J_{2}$. By \ref{4.3} we have $\overline{N}(a^{\ast})J_{2}=c_{k}^{-1}\pi_{e_{k}a}i$. On the other hand: 
\begin{align*}
\overline{\widehat{N}}(\widehat{\varphi}(a^{\ast}))\psi_{k}J_{2}&= \displaystyle \sum_{a_{1} \in _{k}T, t \in L(k)} (C^{-1})_{a,ta_{1}} \ast \overline{\widehat{N}}(a_{1}^{\ast})t^{-1}\overline{J}_{2} \overline{\Pi}_{2} \psi_{k} J_{2} \\
&= \displaystyle \sum_{a_{1} \in _{k}T, t \in L(k)} (C^{-1})_{a,ta_{1}} \ast \overline{ \widehat{N}}(a_{1}^{\ast}) \overline{J}_{2} t^{-1} \overline{\Pi}_{2} \psi_{k}J_{2} \\
&= \displaystyle \sum_{a_{1} \in _{k}T, t \in L(k)} (C^{-1})_{a,ta_{1}} \ast c_{k}^{-1} \pi_{e_{k}a_{1}} \overline{i} t^{-1} \overline{\Pi}_{2} \psi_{k} J_{2} \\
&=\displaystyle \sum_{a_{1} \in _{k}T, t \in L(k)} (C^{-1})_{a,ta_{1}} \ast c_{k}^{-1} \pi_{e_{k}a_{1}} \overline{i} t^{-1} C_{1} \\
&=c_{k}^{-1} \displaystyle \sum_{a_{1} \in _{k}T, t \in L(k)} (C^{-1})_{a,ta_{1}} \ast \pi_{e_{k}a_{1}} t^{-1} Ci \\
&=c_{k}^{-1} \displaystyle \sum_{a_{1} \in _{k}T, t \in L(k)} (C^{-1})_{a,ta_{1}} \ast \pi_{ta_{1}} Ci \\
&=c_{k}^{-1} \displaystyle \sum_{a_{1},a_{2} \in _{k}T, r,t \in L(k)} \varphi^{-1}((C^{-1})_{a,ta_{1}})\pi_{ta_{1}}C\xi_{ra_{2}}\pi_{ra_{2}}i \\
&=c_{k}^{-1} \displaystyle \sum_{a_{1},a_{2} \in _{k}T, r,t \in L(k)} \varphi^{-1}((C^{-1})_{a,ta_{1}}) \varphi^{-1}(C_{ta_{1},ra_{2}})\pi_{ra_{2}}i \\
&=c_{k}^{-1} \displaystyle \sum_{a_{1}, a_{2} \in _{k}T, r,t \in L(k)} \varphi^{-1} \left( (C^{-1})_{a,ta_{1}}C_{ta_{1},ra_{2}}\right) \pi_{ra_{2}}i \\
&=c_{k}^{-1} \pi_{e_{k}a}i
\end{align*}

Therefore $\overline{N}(a^{\ast})J_{2}=\overline{\widehat{N}}(\widehat{\varphi}(a^{\ast}))\psi_{k}J_{2}$. For $J_{3}$ we have $\overline{N}(a^{\ast})J_{3}=c_{k}^{-1}\pi_{e_{k}a}j' \sigma_{2}$ and
\begin{align*}
\overline{\widehat{N}}(\widehat{\varphi}(a^{\ast}))\psi_{k}J_{3}&= \displaystyle \sum_{a_{1} \in _{k}T, t \in L(k)} (C^{-1})_{a,ta_{1}} \ast \overline{\widehat{N}}(a_{1}^{\ast}) \overline{J}_{3}t^{-1} \overline{\Pi}_{3} \psi_{k} J_{3} \\
&=c_{k}^{-1} \displaystyle \sum_{a_{1} \in _{k}T, t \in L(k)} \varphi^{-1} \left( (C^{-1})_{a,ta_{1}}\right) \pi_{ta_{1}} \overline{j} \overline{\sigma}_{2}\underline{C} \\
&=c_{k}^{-1} \displaystyle \sum_{a_{1} \in _{k}T, t \in L(k)} \varphi^{-1} \left( (C^{-1})_{a,ta_{1}}\right) \pi_{ta_{1}} Cj' \sigma_{2} \\
&=c_{k}^{-1} \displaystyle \sum_{a_{1}, a_{2} \in _{k}T, t,r \in L(k)} \varphi^{-1} \left( (C^{-1})_{a,ta_{1}}\right)\pi_{ta_{1}}C \xi_{ra_{2}}\pi_{ra_{2}}j ' \sigma_{2} \\
&=c_{k}^{-1} \displaystyle \sum_{a_{1},a_{2} \in _{k}T, t,r \in L(k)} \varphi^{-1} \left((C^{-1})_{a,ta_{1}}\right) \varphi^{-1}(C_{ta_{1},ra_{2}})\pi_{ra_{2}}j' \sigma_{2} \\
&=c_{k}^{-1}\pi_{e_{k}a}j'\sigma_{2}
\end{align*}
Thus $\overline{N}(a^{\ast})J_{3}=\overline{\widehat{N}}(\widehat{\varphi}(a^{\ast}))\psi_{k}J_{3}$. \\

Finally, $\overline{N}(a^{\ast})J_{4}=0$ and: 

\begin{center}
$\overline{\widehat{N}}(\varphi(a^{\ast}))\psi_{k}J_{4}=\displaystyle \sum_{a_{1} \in _{k}T, t \in L(k)} (C^{-1})_{a,ta_{1}} \ast \overline{\widehat{N}}(a_{1}^{\ast})\overline{J}_{4}t^{-1} \overline{\Pi}_{4}\psi_{k}J_{4}=0$.
\end{center}

and \ref{4.8} holds. \\

Suppose now that $z=^{\ast}b$ where $b \in T_{k}$. In this case $\tau(z)=\sigma(b) \neq k$ and $\sigma(z)=\tau(b)=k$. By \cite[Proposition 8.11]{2}: 

\begin{center}
$\widehat{\varphi}(^{\ast}b)=\displaystyle \sum_{r \in L(k), b_{1} \in T_{k}} r^{-1}(^{\ast}b_{1})(D^{-1})_{b_{1}r,b}$
\end{center}

We have to show that
\begin{equation} \label{4.9}
\psi_{k}\overline{N}(^{\ast}b)=\overline{\widehat{N}}(\widehat{\varphi}(^{\ast}b))
\end{equation}

On one hand, $\overline{\Pi}_{1} \psi_{k} \overline{N}(^{\ast}b)=\overline{\Pi}_{1} \psi_{k}J_{1}\Pi_{1} \overline{N}(^{\ast}b)=- \underline{D}^{-1} \pi_{1}p \xi_{be_{k}}$. On the other hand:

\begin{align*}
\overline{\Pi}_{1} \overline{\widehat{N}}(\widehat{\varphi}(^{\ast}b))&= \displaystyle \sum_{b_{1} \in T_{k}, r \in L(k)} r^{-1} \overline{\Pi}_{1} \overline{\widehat{N}}(^{\ast}b_{1})\varphi^{-1}((D^{-1})_{b_{1}r,b}) \\
&=-\displaystyle \sum_{r \in L(k), b_{1} \in T_{k}} r^{-1} \overline{\pi}_{1} \overline{p} \xi_{b_{1}e_{k}} \varphi^{-1}((D^{-1})_{b_{1}r,b}) \\
&=-\displaystyle \sum_{r \in L(k), b_{1} \in T_{k}} \overline{\pi}_{1} \overline{p} \xi_{b_{1}r} \pi_{b_{1}r}D^{-1} \xi_{be_{k}} \\
&=-\overline{\pi}_{1} \overline{p} D^{-1} \xi_{be_{k}}  \\
&=-\overline{\pi}_{1} D_{1}^{-1}p D D^{-1} \xi_{be_{k}} \\
&=-\overline{\pi}_{1}D_{1}^{-1}p \xi_{be_{k}} \\
&= -\underline{D}^{-1} \pi_{1}p \xi_{be_{k}}
\end{align*}

Consequently
\begin{center}
$\overline{\Pi}_{1} \psi_{k} \overline{N}(^{\ast}b)=\overline{\Pi}_{1} \overline{\widehat{N}}(\widehat{\varphi}(^{\ast}b))$
\end{center}

\vspace{0.1in}

Now consider the map
\begin{center}
$\hat{\gamma'}: N_{out} \rightarrow \operatorname{im}(\widehat{\gamma})$
\end{center}

where $\hat{\gamma}=\overline{i}\widehat{\gamma'}$. Let us consider $\overline{\Pi}_{2}$. We have: \\

\begin{center}
$\overline{\Pi}_{2}\psi_{k}\overline{N}(^{\ast}b)=\overline{\Pi}_{2} \psi_{k} J_{2} \Pi_{2} \overline{N}(^{\ast}b)=-C_{1}\gamma' \xi_{be_{k}}$
\end{center}

and
\begin{align*}
\overline{\Pi}_{2} \overline{\widehat{N}}(\widehat{\varphi}(^{\ast}b))&=\displaystyle \sum_{r \in L(k), b_{1} \in T_{k}} r^{-1} \overline{\Pi}_{2} \overline{\widehat{N}}(^{\ast}b_{1})\varphi^{-1}((D^{-1})_{b_{1}r,b}) \\
&=-\displaystyle \sum_{r \in L(k), b_{1} \in T_{k}} r^{-1} \widehat{\gamma'} \xi_{b_{1}e_{k}} \varphi^{-1}((D^{-1})_{b_{1}r,b}) \\
&=-\displaystyle \sum_{r \in L(k), b_{1} \in T_{k}} \widehat{\gamma'} r^{-1} \xi_{b_{1}e_{k}}\varphi^{-1}((D^{-1})_{b_{1}r,b}) \\
&=-\displaystyle \sum_{r \in L(k), b_{1} \in T_{k}} \widehat{\gamma'} \xi_{b_{1}r} \pi_{b_{1}r}D^{-1} \xi_{be_{k}} \\
&=-\widehat{\gamma'}D^{-1}\xi_{be_{k}}
\end{align*}

Also:

\begin{center}
$\overline{i}C_{1}\gamma' \xi_{be_{k}}=Ci\gamma' \xi_{be_{k}}=C\gamma \xi_{be_{k}} = \hat{\gamma}D^{-1}\xi_{be_{k}}=\overline{i}\widehat{\gamma'}D^{-1}\xi_{be_{k}}$
\end{center}

Since $\overline{i}$ is injective, then $C_{1}\gamma' \xi_{be_{k}}=\widehat{\gamma'}D^{-1}\xi_{be_{k}}$. Therefore

\begin{center}
$\overline{\Pi}_{2}\psi_{k}\overline{N}(^{\ast}b)=\overline{\Pi}_{2} \overline{\widehat{N}}(\widehat{\varphi}(^{\ast}b))$
\end{center}

Finally:

\begin{align*}
\overline{\Pi}_{3} \psi_{k} \overline{N}(^{\ast}b)&=\overline{\Pi}_{3} \psi_{k}J_{3}\Pi_{3}\overline{N}(^{\ast}b)=0 \\
\overline{\Pi}_{3} \overline{\widehat{N}}(\widehat{\varphi}(^{\ast}b))&=\displaystyle \sum_{r \in L(k), b_{1} \in T_{k}} r^{-1}\overline{\Pi}_{3} \overline{ \widehat{N}}(^{\ast}b_{1})\varphi^{-1}((D^{-1})_{b_{1}r,b})=0 \\
\overline{\Pi}_{4}\psi_{k} \overline{N}(^{\ast}b)&=\overline{\Pi}_{4} \psi_{k}J_{4}\Pi_{4}\overline{N}(^{\ast}b)=0 \\
\overline{\Pi}_{4} \overline{ \widehat{N}}(\widehat{\varphi}(^{\ast}b))&=\displaystyle \sum_{r \in L(k), b_{1} \in T_{k}} r^{-1} \overline{\Pi}_{4} \overline {\widehat{N}}(^{\ast}b_{1})\varphi^{-1}((D^{-1})_{b_{1}r,b})=0
\end{align*}

and the proof of Proposition \ref{prop5} is now complete.
\end{proof}

\begin{theorem} \label{teo1} Let $\varphi: \mathcal{F}_{S}(M_{1}) \rightarrow \mathcal{F}_{S}(M)$ be an algebra isomorphism with $\varphi_{|S}=id_{S}$ and let $\mathcal{N}=(N,V)$ be a decorated representation of $(\mathcal{F}_{S}(M_{1}),P)$ where $P$ is a potential such that $e_{k}Pe_{k}=0$. Then there exists an algebra isomorphism:

\begin{center}
$\hat{\varphi}: \mathcal{F}_{S}(\mu_{k}M_{1}) \rightarrow \mathcal{F}_{S}(\mu_{k}M)$
\end{center}

satisfying the following conditions:

\begin{enumerate}[(a)]
\item $\hat{\varphi}(\mu_{k}P)$ is cyclically equivalent to $\mu_{k}(\varphi(P))$.
\item There exists an isomorphism of decorated representations $\psi: \widetilde{\mu_{k}}(\mathcal{N}) \rightarrow \widetilde{\mu_{k}}(\widehat{\mathcal{N}})$.
\end{enumerate} 
\end{theorem}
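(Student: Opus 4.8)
The plan is to reduce Theorem \ref{teo1} to Proposition \ref{prop5} by splitting off the ``change of arrows'' part of $\varphi$. Since $\varphi_{|S}=\mathrm{id}_S$, the isomorphism $\varphi$ carries the maximal ideal $\langle M_{1}\rangle$ of $\mathcal{F}_{S}(M_{1})$ onto $\langle M\rangle$ and respects the adic filtrations, so its degree-$1$ component restricts to an isomorphism of $S$-bimodules $g\colon M_{1}\to M$. Let $\varphi_{\ell}:=\mathcal{F}_{S}(g)\colon \mathcal{F}_{S}(M_{1})\to\mathcal{F}_{S}(M)$ be the induced isomorphism (equal to $\mathrm{id}$ on $S$ and to $g$ on $M_{1}$), and set $\varphi_{u}:=\varphi\circ\varphi_{\ell}^{-1}\colon \mathcal{F}_{S}(M)\to\mathcal{F}_{S}(M)$. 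Then $\varphi_{u}$ fixes $S$ pointwise and its degree-$1$ component is $g\circ g^{-1}=\mathrm{id}_{M}$, so $\varphi_{u}$ is a unitriangular automorphism. Writing the action of $\mathcal{F}_{S}(M)$ on ${}^{\varphi^{-1}}N$ in two steps gives an equality of decorated representations ${}^{\varphi^{-1}}\mathcal{N}={}^{\varphi_{u}^{-1}}\bigl({}^{\varphi_{\ell}^{-1}}\mathcal{N}\bigr)$, and $e_{k}\,\varphi_{\ell}(P)\,e_{k}=\varphi_{\ell}(e_{k}Pe_{k})=0$, so Proposition \ref{prop5} applies to $\varphi_{u}$ together with the representation ${}^{\varphi_{\ell}^{-1}}\mathcal{N}$ of $(\mathcal{F}_{S}(M),\varphi_{\ell}(P))$.

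The linear factor $\varphi_{\ell}$ is handled by naturality. An $S$-bimodule isomorphism $g\colon M_{1}\to M$ induces, summand by summand, an $S$-bimodule isomorphism $\mu_{k}(g)\colon \mu_{k}M_{1}\to\mu_{k}M$ --- by $g$ on $\bar e_{k}M_{1}\bar e_{k}$, by $g\otimes_{S}g$ on $M_{1}e_{k}M_{1}$, and by the inverse transposes of $g|_{e_{k}M_{1}}$ and $g|_{M_{1}e_{k}}$ on the two dual summands --- hence an algebra isomorphism $\hat\varphi_{\ell}:=\mathcal{F}_{S}(\mu_{k}g)\colon \mathcal{F}_{S}(\mu_{k}M_{1})\to\mathcal{F}_{S}(\mu_{k}M)$. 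Choosing the $Z$-free generating set of $M$, together with its local bases, to be the images under $g$ of those fixed for $M_{1}$, the premutation construction becomes a relabeling: $\hat\varphi_{\ell}(\mu_{k}P)=\mu_{k}(\varphi_{\ell}(P))$, since the extra term $\sum_{bt,\,sa}[btsa]\,((sa)^{\ast})\,({}^{\ast}(bt))$ is, under this relabeling, intertwined by $\mu_{k}(g)$; and the data $N_{in},N_{out},\alpha,\beta,\gamma$ attached to $\mathcal{N}$ are carried by the identity map $N\to{}^{\varphi_{\ell}^{-1}}N$ to the corresponding data $\hat\alpha,\hat\beta,\hat\gamma$ of ${}^{\varphi_{\ell}^{-1}}\mathcal{N}$, so that kernels and images correspond. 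Transporting the chosen splitting data accordingly, the identity on $N_{i}$ for $i\neq k$ together with the induced map on $\overline N_{k}$ gives an isomorphism of decorated representations $\widetilde{\mu_{k}}(\mathcal{N})\to\widetilde{\mu_{k}}({}^{\varphi_{\ell}^{-1}}\mathcal{N})$ compatible with $\hat\varphi_{\ell}$.

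Now assemble. Applying Proposition \ref{prop5} to $\varphi_{u}$ and ${}^{\varphi_{\ell}^{-1}}\mathcal{N}$ produces a unitriangular automorphism $\hat\varphi_{u}$ of $\mathcal{F}_{S}(\mu_{k}M)$ with $\hat\varphi_{u}(\mu_{k}\varphi_{\ell}(P))$ cyclically equivalent to $\mu_{k}(\varphi_{u}\varphi_{\ell}(P))=\mu_{k}(\varphi(P))$, and an isomorphism of decorated representations $\widetilde{\mu_{k}}({}^{\varphi_{\ell}^{-1}}\mathcal{N})\to\widetilde{\mu_{k}}\bigl({}^{\varphi_{u}^{-1}}({}^{\varphi_{\ell}^{-1}}\mathcal{N})\bigr)=\widetilde{\mu_{k}}(\widehat{\mathcal{N}})$. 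Set $\hat\varphi:=\hat\varphi_{u}\circ\hat\varphi_{\ell}\colon \mathcal{F}_{S}(\mu_{k}M_{1})\to\mathcal{F}_{S}(\mu_{k}M)$. Then (a) holds: $\hat\varphi(\mu_{k}P)=\hat\varphi_{u}(\hat\varphi_{\ell}(\mu_{k}P))=\hat\varphi_{u}(\mu_{k}(\varphi_{\ell}(P)))$ is, since $\hat\varphi_{u}$ is an algebra automorphism and hence preserves cyclic equivalence, cyclically equivalent to $\mu_{k}(\varphi(P))$ (alternatively, (a) is \cite[Theorem 8.12]{2} directly). And (b) holds by composing the isomorphism $\widetilde{\mu_{k}}(\mathcal{N})\to\widetilde{\mu_{k}}({}^{\varphi_{\ell}^{-1}}\mathcal{N})$ of the second paragraph with the one just obtained, yielding $\psi\colon\widetilde{\mu_{k}}(\mathcal{N})\to\widetilde{\mu_{k}}(\widehat{\mathcal{N}})$.

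The step I expect to be the main obstacle is the second paragraph: one must check that the \emph{entire} premutated module structure --- not only $\widetilde{P}$ but also the $F$-linear maps $\overline N(a^{\ast})$ and $\overline N({}^{\ast}b)$ of \ref{4.3} and \ref{4.4}, which depend on the chosen splitting data --- transforms correctly under the linear isomorphism $g$. Making this precise forces one to track, uniformly on both sides, the choices of $Z$-free generating sets, the induced dual bases $\{a,a^{\ast}\}$ and $\{{}^{\ast}b,b\}$, the bases $L(k)$, and the splitting maps $p,\sigma_{2}$, and to verify that $\mu_{k}(g)$ is compatible with all of them; once that bookkeeping is in place, the compatibility statements reduce to the naturality identities already implicit in Section \ref{sec4}.
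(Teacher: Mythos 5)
Your overall strategy coincides with the paper's: factor $\varphi=\varphi_{u}\circ\varphi_{\ell}$ with $\varphi_{\ell}$ induced by an $S$-bimodule isomorphism $g\colon M_{1}\to M$ and $\varphi_{u}$ unitriangular, dispose of $\varphi_{u}$ by Proposition \ref{prop5}, and handle $\varphi_{\ell}$ separately; part (a) is likewise quoted from \cite{2}. The difference, and the problem, is entirely in your second paragraph, where the linear factor is treated.

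You propose to choose the $Z$-free generating set of $M$ (and its local bases) to be $g(T_{1})$, so that the premutation ``becomes a relabeling.'' This has a genuine gap. First, $g$ is only an isomorphism of $S$-bimodules; it need not carry the fixed $Z$-form $(M_{1})_{0}$ onto $M_{0}$, so $g(T_{1})$ need not be a $Z$-local basis of the $M_{0}$ with respect to which $\widetilde{\mu_{k}}$, the sets $T_{k}$, ${}_{k}T$, the elements $Y_{[bra]}$, and the maps $\alpha,\beta,\gamma$ are all defined. Second, even if one re-runs the construction with the transported data, the conclusion is then about a \emph{different} premutated representation, and you would still owe a proof that the construction of $\widetilde{\mu_{k}}(\widehat{\mathcal{N}})$ is independent (up to right-equivalence) of the choice of generating set and local basis --- which is not among the established results (only independence of the splitting data is proved). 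That independence is precisely the content you are deferring to ``bookkeeping,'' and it is the substantive part of the argument. The paper does not relabel: it keeps the fixed generating sets $T_{1}$ and $T$ on both sides, expands $\phi(ra_{1})=\sum_{r',a}r'a\,C_{r'a,ra_{1}}$ and $\phi(b_{1}t)=\sum_{b,t'}D_{b_{1}t,bt'}\,bt'$ with coefficients in $S$, uses these to define explicit maps $C\colon N_{in}^{1}\to N_{in}$ and $D\colon N_{out}\to N_{out}^{1}$, and then verifies $\alpha C=\alpha_{1}$, $D\beta=\beta_{1}$ and, via a computation with $X_{[bwa]}(\hat{\varphi}\rho(P))$, $\hat{\Delta}$, and \cite[Lemma 5.2]{2}, the key identity $\hat{\gamma}=C\gamma_{1}D$; the right-equivalence is then assembled from the induced maps on the kernel/image quotients exactly as in Proposition \ref{prop5}. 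In particular the identity $\hat{\gamma}=C\gamma_{1}D$ is not a formal naturality statement --- it is the analogue of Lemma \ref{lem4} and requires an actual computation with the $Y_{[bra]}$ and the basis identities \ref{2.1}--\ref{2.4}. To repair your proof you should either carry out this change-of-basis computation, or first prove the missing independence-of-generating-set lemma.
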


\begin{proof} Part (a) follows by \cite[Theorem 8.14]{2}. Note that $\varphi$ is equal to the composition of an algebra isomorphism $\mathcal{F}_{S}(M_{1}) \rightarrow \mathcal{F}_{S}(M)$, induced by an isomorphism of $S$-bimodules $M_{1} \rightarrow M$, with a unitriangular automorphism of $\mathcal{F}_{S}(M)$. In view of Proposition \ref{prop5}, it suffices to prove the statement when $\varphi$ is induced by an isomorphism of $S$-bimodules $\phi: M_{1} \rightarrow M$. Let $T_{1}$ be a $Z$-free generating set of $M_{1}$ and let $T$ be a $Z$-free generating set of $M$. Associated to the representation $\mathcal{N}$ we have the following left $D_{k}$-spaces:

\begin{align*}
N_{in}^{1}&=\displaystyle \bigoplus_{a \in _{k}T_{1}} D_{k} \otimes_{F} N_{\tau(a)} \\
N_{out}^{1}&= \displaystyle \bigoplus_{b \in (T_{1})_{k}} D_{k} \otimes_{F} N_{\sigma(b)}
\end{align*}

and associated to the representation $\widehat{\mathcal{N}}$ we also have the following left $D_{k}$-spaces:

\begin{align*}
N_{in} &= \displaystyle \bigoplus_{a \in _{k}T} D_{k} \otimes_{F} N_{\tau(a)} \\
N_{out}&=\displaystyle \bigoplus_{b \in T_{k}} D_{k} \otimes_{F} N_{\sigma(b)}
\end{align*}

Let $\gamma_{1}: N_{out}^{1} \rightarrow N_{in}^{1}$, $\alpha_{1}: N_{in}^{1} \rightarrow N_{k}$, $\beta_{1}: N_{k} \rightarrow N_{out}^{1}$ be the maps associated to $\mathcal{N}$ and $\gamma: N_{out} \rightarrow N_{in}$, $\alpha: N_{in} \rightarrow N_{k}$ and $\beta: N_{k} \rightarrow N_{out}$ be the maps associated to $\widehat{\mathcal{N}}$. \\

For $a_{1} \in _{k}T_{1}, r \in L(k)$ we have:

\begin{center}
$\phi(ra_{1})=\displaystyle \sum_{r' \in L(k), a \in _{k}T} r'aC_{r'a,ra_{1}}$
\end{center}

For $b_{1} \in (T_{1})_{k}, t \in L(k)$ we have:

\begin{center}
$\phi(b_{1}t)=\displaystyle \sum_{b \in T_{k}, t' \in L(k)} D_{b_{1}t,bt'}bt'$
\end{center}

where $C_{r'a,ra_{1}}, D_{b_{1}t,bt'} \in S$. Define $C: N_{in}^{1} \rightarrow N_{in}$ as the linear map such that for all $r,r' \in L(k)$, $a,a_{1} \in _{k}T_{1}$, the map:

\begin{center}
$\pi_{r'a}C\xi_{ra_{1}}: N_{\tau(a_{1})} \rightarrow N_{\tau(a)}$
\end{center}

is given by $\pi_{r'a}C\xi_{ra_{1}}(n)=C_{r'a,ra_{1}}n$. Similarly, we define $D: N_{out} \rightarrow N_{out}^{1}$ as the linear map such that for all $t,t' \in L(k)$, $b,b_{1} \in T_{k}$, the map:

\begin{center}
$\pi_{b_{1}t}D\xi_{bt'}: N_{\sigma(b)} \rightarrow N_{\sigma(b_{1})}$
\end{center}

is given by $\pi_{b_{1}t}D\xi_{bt'}(n)=D_{b_{1}t,bt'}n$. \\

Let us show that $\alpha C= \alpha_{1}$, $D\beta=\beta_{1}$. If $n \in N_{\tau(a_{1})}$, then \\
\begin{align*}
\alpha C \xi_{ra_{1}}(n)&=\displaystyle \sum_{r'a' \in _{k}\hat{T}_{1}} \alpha \xi_{r'a'}\pi_{r'a'}C\xi_{ra_{1}}(n) \\
&=\displaystyle \sum_{r'a' \in _{k}\hat{T}_{1}} \phi^{-1}\left(r'a'C_{r'a',ra_{1}}\right)n = \phi^{-1}\phi(ra_{1})n \\
&=ra_{1}n=\alpha_{1}\xi_{ra_{1}}(n)
\end{align*}

whence $\alpha C=\alpha_{1}$. If $n \in N_{\sigma(b)}$, $b_{1} \in (T_{1})_{k}$ and $t \in L(k)$, then \\
\begin{align*}
\pi_{b_{1}t}D\beta(n)&=\displaystyle \sum_{bt' \in \hat{T}_{k}} \pi_{b_{1}t}D\xi_{bt'}\pi_{bt'}\beta(n) \\
&=\displaystyle \sum_{bt' \in \hat{T}_{k}} D_{b_{1}t,bt'}(\phi^{-1}(bt')n) \\
&=\phi^{-1}\left( \displaystyle \sum_{bt' \in \hat{T}_{k}} D_{b_{1}t,bt'}bt'n\right) \\
&=\phi^{-1}\phi(b_{1}t)n \\
&=b_{1}tn=\pi_{b_{1}t}\beta_{1}(n)
\end{align*}

hence $D\beta=\beta_{1}$. \\

We now show that $\hat{\gamma}=C\gamma_{1} D$. Let $br \in \hat{T}_{k}$, $sa \in _{k}\hat{T}$, $n \in N_{\sigma(b)}$. We have:

\begin{center}
$\hat{\gamma}_{sa,br}(n)=\displaystyle \sum_{w \in L(k)} s^{\ast}(r^{-1}w)\varphi^{-1}(Y_{[bwa]}(\varphi(P)))n$ \\
\end{center}

Also:
\begin{center}
$\varphi^{-1}\left(Y_{[bwa]}(\varphi(P))\right)=\varphi^{-1} \rho^{-1} \left(X_{[bwa]}(\rho\varphi(P))\right)=\rho^{-1}\hat{\varphi}^{-1}\left(X_{[bwa]}(\hat{\varphi}\rho(P))\right)$
\end{center}

Let
\begin{center}
$T_{1}^{\mu}=\{\rho(c_{1}): c_{1} \in T_{1} \cap \bar{e_{k}}M_{1}\bar{e_{k}}\} \cup \{a_{1}^{\ast}: a_{1} \in _{k}T_{1}\} \cup \{^{\ast}b_{1}: b_{1} \in (T_{1})_{k} \} \cup \{\rho(b_{1}ra_{1}): b_{1} \in (T_{1})_{k}, a_{1} \in _{k}T_{1}, r \in L(k)\}$
\end{center}

denote the $Z$-free generating set of $\mu_{k}M_{1}$ associated to $T_{1}$. Likewise, let $T^{\mu}$ denote the $Z$-free generating set of $\mu_{k}M$ associated to $T$. Then:

\begin{center}
$X_{[bwa]}(\hat{\varphi}(\rho(P)))=\displaystyle \sum_{s \in L(\sigma(b))} \displaystyle \sum_{c \in (\hat{T}_{1})^{\mu}_{\hat{k},\hat{k}}} (s\rho(bwa))^{\ast}(\hat{\Delta}(\hat{\varphi}(c))\Diamond z(c))s$
\end{center}

where $z(c)=\hat{\varphi}(X_{c}(\rho(P)))$. \\

The elements of the set $(T_{1})^{\mu}_{\hat{k},\hat{k}}$ are either elements of the form $\rho(c_{1})$ where $c_{1} \in T_{1} \cap \bar{e_{k}}M\bar{e_{k}}$ or elements of the form $\rho(b_{1}ra_{1})$ where $b_{1} \in (T_{1})_{k},a_{1} \in _{k}T_{1}, r \in L(k)$. We now compute $W(c_{1})$ when $c_{1} \in T_{1} \cap \bar{e_{k}}M\bar{e_{k}}$. 

\begin{center}
$W(c_{1})=\displaystyle \sum_{s \in L(\sigma(b))} (s\rho(bwa))^{\ast}( \hat{\Delta}(\hat{\varphi}\rho(c_{1})) \Diamond z(\rho(c_{1}))s$
\end{center}

Then $\hat{\varphi}\rho(c_{1})=\rho(\varphi(c_{1}))=\rho(\phi(c_{1}))=\displaystyle \sum_{i} r_{i}\rho(c_{i})t_{i}$ where $r_{i},t_{i} \in S$, $c_{i} \in T \cap \bar{e_{k}}M\bar{e_{k}}$.

Using \cite[Lemma 5.2]{2} one gets:
\begin{align*}
W(c_{1})&=\displaystyle \sum_{i} \displaystyle \sum_{s \in L(\sigma(b))} (s\rho(bwa))^{\ast}\left(\hat{\Delta}(\rho(c_{i})) \Diamond t_{i}z(\rho(c_{1}))r_{i}\right)s \\
&=\displaystyle \sum_{i} \displaystyle \sum_{s \in L(\sigma(b))} (s\rho(bwa))^{\ast}\left(\rho(c_{i})t_{i}z(\rho(c_{1}))r_{i}\right)s=0.
\end{align*}

Therefore
\begin{center}
$X_{[bwa]}(\hat{\varphi}(\rho(P)))=\displaystyle \sum_{s \in L(\sigma(b))} \displaystyle \sum_{b_{1}ha_{1}} (s\rho(bwa))^{\ast}(\hat{\Delta}(\hat{\varphi}\rho(b_{1}ha_{1})) \Diamond z(\rho(b_{1}ha_{1})))s$
\end{center}

and \\

\begin{center}
$\hat{\varphi}(\rho(b_{1}ha_{1}))=\rho\varphi(b_{1}ha_{1})=\rho(\phi(b_{1})\phi(ha_{1}))=\displaystyle \sum_{b',r',r'',a'',w'} (w')^{\ast}(r'r'')D_{b_{1},b'r'}\rho(b'w'a'')C_{r''a'',ha_{1}}$
\end{center}

Once again, \cite[Lemma 5.2]{2} yields: \\

\begin{center}
$X_{[bwa]}(\hat{\varphi}(\rho(P)))=\displaystyle \sum_{s \in L(\sigma(b)),b_{1}ha_{1},b',r',r'',a'',w'} (s\rho(bwa))^{\ast}\left((w')^{\ast}(r'r'')\rho(b'w'a'')C_{r''a'',ha_{1}}z(\rho(b_{1}ha_{1}))D_{b_{1},b'r'}\right)s$
\end{center}

Therefore:

\begin{center}
$X_{[bwa]}(\hat{\varphi}(\rho(P)))=\displaystyle \sum_{b_{1}ha_{1},r',r''} w^{\ast}(r'r'')C_{r''a,ha_{1}}z(\rho(b_{1}ha_{1}))D_{b_{1},br'}$
\end{center}

Note that
\begin{center}
$z(\rho(b_{1}ha_{1}))=\hat{\varphi}(X_{[b_{1}ha_{1}]}(\rho(P)))=\hat{\varphi}\rho(Y_{[b_{1}ha_{1}]}(P))=\rho\varphi(Y_{[b_{1}ha_{1}]}(P))$
\end{center}

This yields the equality:
\begin{center}
$X_{[bwa]}(\hat{\varphi}(\rho(P)))=\rho \varphi \left( \displaystyle \sum_{b_{1}ha_{1},r',r''} w^{\ast}(r'r'')C_{r''a,ha_{1}}Y_{[b_{1}ha_{1}]}(P)D_{b_{1},br'}\right)$
\end{center}

Consequently:

\begin{align*}
\hat{\gamma}_{sa,br}(n)&=\displaystyle \sum_{w \in L(k)} s^{\ast}(r^{-1}w) \displaystyle \sum_{b_{1}ha_{1},r',r''} w^{\ast}(r'r'')C_{r''a,ha_{1}}Y_{[b_{1}ha_{1}]}(P)D_{b_{1},br'} n\\
&=\displaystyle \sum_{b_{1}ha_{1},r',r''} s^{\ast}(r^{-1}r'r'')C_{r''a,ha_{1}}Y_{[b_{1}ha_{1}]}(P)D_{b_{1},br'}n
\end{align*}

A similar argument to the proof of Lemma \ref{lem4} yields that $\hat{\gamma}_{sa,br}(n)$ is equal to $(C\gamma_{1}D)_{sa,br}(n)$. The desired right-equivalence is analogous to the one constructed in Proposition \ref{prop5}.
\end{proof}
\begin{prop} \label{prop6} The right-equivalence class of the representation $\widetilde{\mu_{k}}(\mathcal{N})$ is determined by the right-equivalence class of the representation $\mathcal{N}$.
\end{prop}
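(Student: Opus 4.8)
We work under the standing hypotheses which make $\widetilde{\mu_{k}}$ defined (in particular $e_{k}Pe_{k}=0$), and freely use that all premutations occurring below are well-defined decorated representations. Suppose $(\varphi,\psi,\eta)\colon\mathcal{N}\to\mathcal{N}'$ is a right-equivalence between decorated representations $\mathcal{N}=(N,V)$ of $(\mathcal{F}_{S}(M),P)$ and $\mathcal{N}'=(N',V')$ of $(\mathcal{F}_{S}(M'),P')$; thus $\varphi\colon\mathcal{F}_{S}(M)\to\mathcal{F}_{S}(M')$ is an algebra isomorphism with $\varphi_{|S}=id_{S}$, $\varphi(P)$ is cyclically equivalent to $P'$, $\psi$ is an $F$-linear isomorphism with $\psi(un)=\varphi(u)\psi(n)$, and $\eta\colon V\to V'$ is an isomorphism of left $S$-modules. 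As in the paragraph preceding Proposition \ref{prop5}, form the decorated representation $\widehat{\mathcal{N}}={}^{\varphi^{-1}}N=(\widehat{N},V)$ of $(\mathcal{F}_{S}(M'),\varphi(P))$, where $\widehat{N}=N$ as an $F$-vector space and $u\ast n:=\varphi^{-1}(u)n$.

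\textbf{Step 1: reduction via Theorem \ref{teo1}.} Applying Theorem \ref{teo1} to $\varphi$ (with the theorem's $M_{1}$ being our $M$ and its ambient module our $M'$; the hypotheses hold since $\varphi_{|S}=id_{S}$ and $e_{k}Pe_{k}=0$) we obtain an algebra isomorphism $\widehat{\varphi}\colon\mathcal{F}_{S}(\mu_{k}M)\to\mathcal{F}_{S}(\mu_{k}M')$ with $\widehat{\varphi}(\mu_{k}P)$ cyclically equivalent to $\mu_{k}(\varphi(P))$, and an isomorphism of decorated representations $\Psi_{1}\colon\widetilde{\mu_{k}}(\mathcal{N})\to\widetilde{\mu_{k}}(\widehat{\mathcal{N}})$ realised over $\widehat{\varphi}$. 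It therefore suffices to show that $\widetilde{\mu_{k}}(\widehat{\mathcal{N}})$ is right-equivalent to $\widetilde{\mu_{k}}(\mathcal{N}')$: the composite of such a right-equivalence with $\Psi_{1}$ then works, its algebra component $\widehat{\varphi}$ being a right-equivalence of $(\mathcal{F}_{S}(\mu_{k}M),\mu_{k}P)$ with $(\mathcal{F}_{S}(\mu_{k}M'),\mu_{k}P')$ because $\widehat{\varphi}(\mu_{k}P)$ is cyclically equivalent to $\mu_{k}(\varphi(P))$, which is cyclically equivalent to $\mu_{k}(P')$ (the operation $\mu_{k}$ preserving cyclic equivalence, as is clear from the formula for $\widetilde{P}$ recalled at the start of Section \ref{sec4}).

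\textbf{Step 2: transporting the module.} Since $\psi(u\ast n)=\psi(\varphi^{-1}(u)n)=u\psi(n)$ for all $u\in\mathcal{F}_{S}(M')$, the map $\psi$ is an isomorphism of $\mathcal{F}_{S}(M')$-modules $\widehat{N}\to N'$. Transporting the structure of $\mathcal{N}'$ back along $\psi^{-1}$ and $\eta^{-1}$ produces a decorated representation $\mathcal{N}''=(\widehat{N},V)$ of $(\mathcal{F}_{S}(M'),P')$ which is right-equivalent to $\mathcal{N}'$ via the identity of $\mathcal{F}_{S}(M')$; and since the construction $\widetilde{\mu_{k}}$ is functorial for isomorphisms of decorated representations over a fixed algebra with potential (checked exactly as in the Proposition asserting that the isoclass of $\widetilde{\mu_{k}}(\mathcal{N})$ does not depend on the choice of splitting data), $\widetilde{\mu_{k}}(\mathcal{N}'')$ is right-equivalent to $\widetilde{\mu_{k}}(\mathcal{N}')$. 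Thus we are reduced to comparing $\widetilde{\mu_{k}}(\widehat{\mathcal{N}})$ with $\widetilde{\mu_{k}}(\mathcal{N}'')$: these have the \emph{same} underlying module $\widehat{N}$ and decoration $V$, and differ only in that the ambient potential is $\varphi(P)$ in the first case and $P'$ in the second, these two being cyclically equivalent.

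\textbf{Step 3: invariance under cyclic equivalence of the potential.} The maps $\alpha$ and $\beta$ used to build $\widetilde{\mu_{k}}$ are defined from the module multiplication operators alone, hence coincide for $\widehat{\mathcal{N}}$ and $\mathcal{N}''$. The map $\gamma$ involves the potential only through the elements $Y_{[bwa]}(-)$, and I claim $Y_{[bwa]}(\varphi(P))=Y_{[bwa]}(P')$. Indeed $X_{[bwa]^{\ast}}(\rho(Q))=\rho(Y_{[bwa]}(Q))$ with $\rho$ injective; as $\rho$ is an algebra isomorphism it preserves cyclic equivalence, and cyclically equivalent potentials have the same image under the cyclic-derivative operators $X_{c^{\ast}}(-)$ (the standard vanishing of cyclic derivatives on cyclic commutators, which also underlies \cite[Theorem 5.3]{2}), so $\rho(Y_{[bwa]}(\varphi(P)))=X_{[bwa]^{\ast}}(\rho(\varphi(P)))=X_{[bwa]^{\ast}}(\rho(P'))=\rho(Y_{[bwa]}(P'))$, and the claim follows by injectivity of $\rho$. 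Hence $\gamma$ also coincides for $\widehat{\mathcal{N}}$ and $\mathcal{N}''$, and choosing the same splitting data for both we get $\widetilde{\mu_{k}}(\widehat{\mathcal{N}})=\widetilde{\mu_{k}}(\mathcal{N}'')$ as decorated representations --- of $(\mathcal{F}_{S}(\mu_{k}M'),\mu_{k}(\varphi(P)))$ and $(\mathcal{F}_{S}(\mu_{k}M'),\mu_{k}(P'))$ respectively, which are related by the identity of $\mathcal{F}_{S}(\mu_{k}M')$ since $\mu_{k}(\varphi(P))$ and $\mu_{k}(P')$ are cyclically equivalent. Combining Steps 1--3 gives the right-equivalence $\widetilde{\mu_{k}}(\mathcal{N})\cong\widetilde{\mu_{k}}(\mathcal{N}')$.

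The main obstacle is Step 3 --- the cyclic-equivalence invariance of the elements $Y_{[bwa]}(-)$, together with the verification that once this is granted the two premutation constructions literally agree, including the structure maps $\overline{N}(a^{\ast})$ and $\overline{N}(^{\ast}b)$ of \ref{4.3} and \ref{4.4}; this plays here the role that the lengthy analysis of $Y_{[bwa]}(\varphi(P))$ plays in the proof of Lemma \ref{lem4}. Steps 1 and 2 are assembly of Theorem \ref{teo1}, the independence-of-splitting-data Proposition, and the elementary functoriality of $\widetilde{\mu_{k}}$ under module isomorphisms.
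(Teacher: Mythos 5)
Your proof is correct and its skeleton coincides with the paper's: both first invoke Theorem \ref{teo1} to replace $\widetilde{\mu_{k}}(\mathcal{N})$ by $\widetilde{\mu_{k}}(\widehat{\mathcal{N}})$, and then exploit the fact that $\psi$ is an isomorphism of $\mathcal{F}_{S}(M')$-modules $\widehat{N}\to N'$ to compare $\widetilde{\mu_{k}}(\widehat{\mathcal{N}})$ with $\widetilde{\mu_{k}}(\mathcal{N}')$. The difference is organizational: the paper handles the module isomorphism and the change of potential (from $\varphi(P)$ to the cyclically equivalent $P'$) in one stroke, via the intertwining relations \ref{4.10}, the transported splitting data \ref{4.11}, and the explicit compatibility checks for $\overline{N}(a^{\ast})$ and $\overline{N}({}^{\ast}b)$; you factor this into a pure module transport over the fixed potential $P'$ followed by a pure change of potential over the fixed module. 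Your factorization is the more transparent one, since it isolates exactly where cyclic equivalence enters (the identity $Y_{[bwa]}(\varphi(P))=Y_{[bwa]}(P')$, which the paper's relation $\hat{\psi}_{i}\gamma=\gamma'\hat{\psi}_{o}$ also uses silently). Two caveats. First, the Step 2 ``functoriality'' is not literally ``checked exactly as in'' the splitting-data proposition, which compares two splitting data on one and the same module; what is actually needed is the intertwining computation of \ref{4.10}--\ref{4.11} together with the verifications for $\overline{N}(a^{\ast})$ and $\overline{N}({}^{\ast}b)$ --- routine, but this is the bulk of the work and should be written out rather than attributed to a different statement. Second, in Step 3 the justification ``$\rho$ preserves cyclic equivalence'' is too quick: $\rho$ is only defined on $\mathcal{F}_{S}(M')_{\hat{k},\hat{k}}$, while the commutators witnessing the cyclic equivalence of $\varphi(P)$ and $P'$ may pass through $e_{k}$; the correct input is \cite[Proposition 8.15]{2} (which the paper cites at exactly this point), giving that $\mu_{k}(\varphi(P))$ and $\mu_{k}(P')$, hence $\rho(\varphi(P))$ and $\rho(P')$, are cyclically equivalent, after which invariance of $X_{[bwa]^{\ast}}$ under cyclic equivalence yields your claim.
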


\begin{proof}

Let $\mathcal{N}=(N,V)$ be a decorated representation of $(\mathcal{F}_{S}(M),P)$ and let $\mathcal{N}'=(N',V')$ be a decorated representation of $(\mathcal{F}_{S}(M'),P')$. Suppose that such representations are right-equivalent, then there exists an algebra isomorphism $\varphi: \mathcal{F}_{S}(M) \rightarrow \mathcal{F}_{S}(M')$, with $\varphi_{|S}=id_{S}$, such that $\varphi(P)$ is cyclically equivalent to $P'$. Recall that associated to the representation $\mathcal{N}=(N,V)$ we have a decorated representation $\widehat{\mathcal{N}}=(\widehat{N},V)$ of $(\mathcal{F}_{S}(M'),\varphi(P))$ where $\widehat{N}=N$ as $F$-vector spaces, and for $u \in \mathcal{F}_{S}(M')$ and $n \in N$ we let $u \ast n:=\varphi^{-1}(u)n$. Let $\psi: \widehat{N} \rightarrow N'$ be the isomorphism of $F$-vector spaces induced by the right-equivalence between $\mathcal{N}$ and $\mathcal{N'}$. Note that $\psi$ is an isomorphism of left $\mathcal{F}_{S}(M')$-modules because if $u \in \mathcal{F}_{S}(M')$ and $n \in \widehat{N}$ then $\psi(un)=\psi(\varphi^{-1}(u)n)=\varphi(\varphi^{-1}(u))\psi(n)=u\psi(n)$. Now, let us show that:

\begin{enumerate}[(a)]
\item $\widetilde{\mu_{k}}(\mathcal{N})$ is right-equivalent to $\widetilde{\mu_{k}}(\mathcal{\widehat{N}})$
\item $\widetilde{\mu_{k}}(\mathcal{\widehat{N}})$ is right-equivalent to $\widetilde{\mu_{k}}(\mathcal{N'})$
\end{enumerate}

\vspace{0.2in}

Note that $(a)$ follows immediately by Theorem \ref{teo1}. We now prove $(b)$. Consider the cyclically equivalent potentials $\varphi(P)$ and $P'$ in $\mathcal{F}_{S}(M')$. By \cite[Proposition 8.15]{2} we have that $\mu_{k}(\varphi(P))$ is cyclically equivalent to $\mu_{k}P'$, in particular $\mu_{k}(\varphi(P))$ is right-equivalent to $\mu_{k}P'$ via the identity map $id_{\mathcal{F}_{S}(\mu_{k}M')}$. \\

Note that $\psi$ induces isomorphisms of $F$-vector spaces: 
\begin{align*}
& \hat{\psi}_{i}: N_{in} \rightarrow N_{in}' \\
& \hat{\psi}_{o}: N_{out} \rightarrow N_{out}'
\end{align*}

Let $\rho=\psi^{-1}: N' \rightarrow \widehat{N}$, then $\rho$ also induces isomorphisms of $F$-vector spaces:
\begin{align*}
& \rho_{i}: N_{in}' \rightarrow N_{in} \\
& \rho_{o}: N_{out}' \rightarrow N_{out}
\end{align*}

Let $\psi_{k}: N_{k} \rightarrow N_{k}'$ and $\rho_{k}: N_{k}' \rightarrow N_{k}$ be the maps induced by $\psi$ and $\rho$. Then we have the following equalities:
\begin{equation} \label{4.10}
\begin{split}
\psi_{k}\alpha&=\alpha' \hat{\psi}_{i}\\
\beta' \psi_{k}&=\hat{\psi}_{o} \beta\\
\hat{\psi}_{i} \gamma &= \gamma' \hat{\psi}_{o}\\
\rho_{k}\alpha'&=\alpha\rho_{i}\\
\beta\rho_{k} &= \rho_{o} \beta'\\
\rho_{i} \gamma' &= \gamma \rho_{o}
\end{split}
\end{equation}

Observe that $\hat{\psi}_{i}$ induces a map $ker(\alpha) \rightarrow ker(\alpha')$ and $\rho_{i}$ induces a map $ker(\alpha') \rightarrow ker(\alpha)$ such that $\rho_{i}=(\hat{\psi}_{i})^{-1}$. Hence $\hat{\psi}_{i}$ induces an isomorphism between $ker(\alpha)$ and $ker(\alpha')$. Similarly,  $\hat{\psi}_{i}$ induces an isomorphism $im(\gamma) \rightarrow im(\gamma')$, $\hat{\psi}_{o}$ induces an isomorphism $ker(\gamma) \rightarrow ker(\gamma')$ and $\hat{\psi}_{o}$ induces an isomorphism $im(\beta) \rightarrow im(\beta')$.

To construct $\overline{N'}$ we choose the splitting data $(p',\sigma')$ in terms of the splitting data $(p,\sigma)$ of $\overline{N}$ as follows: 

\begin{equation} \label{4.11}
\begin{split}
p'&=\psi_{1}p(\hat{\psi}_{o})^{-1} \\
\sigma'&=\psi_{2}\sigma(\overline{\psi}_{2})^{-1}
\end{split}
\end{equation}

where $\psi_{1}$ is the restriction of $\hat{\psi}_{o}$ to $\operatorname{ker}(\gamma)$, $\psi_{2}$ is the restriction of the isomorphism $\hat{\psi}_{i}: N_{in} \rightarrow N_{in}'$ to $ker(\alpha)$ and $\overline{\psi_{2}}: \frac{ker(\alpha)}{im(\gamma)} \rightarrow \frac{ker(\alpha')}{im(\gamma')}$ is the isomorphism induced by $\hat{\psi}_{i}$.

Define $\widetilde{\psi}: \overline{\widehat{N}} \rightarrow \overline{N'}$ as the map $\psi: \widehat{N}_{i} \rightarrow N'_{i}$ for all $i \neq k$ and if $i=k$ then $\widetilde{\psi}$ is the map given in diagonal matrix form by the maps previously defined. Let $d_{1} \in D_{k}$,  $b \in T_{k}$ and $d_{2} \in D_{\sigma(b)}$. Let us show the following equality holds for every $n \in N_{\sigma(b)}$: \\

\begin{center}
$\widetilde{\psi}(d_{1}(^{\ast}b)d_{2} \cdot n) = d_{1}(^{\ast}b)d_{2} \cdot \widetilde{\psi}(n)$
\end{center}

\vspace{0.2in}

First consider the term on the left-hand side. We have \\
\begin{align*}
\widetilde{\psi}(d_{1}(^{\ast}b)d_{2} \cdot n)&=\widetilde{\psi}(d_{1}\overline{N}(^{\ast}b)(d_{2}n))\\
&=\widetilde{\psi}
\begin{pmatrix}
-d_{1} \pi_{1} p \xi_{be_{k}}(d_{2}n) \\
-d_{1} \gamma' \xi_{be_{k}}(d_{2}n) \\
0 \\
0
\end{pmatrix}
\end{align*}

On the other hand, 
\begin{align*}
d_{1}(^{\ast}b)d_{2} \cdot \widetilde{\psi}(n)&=d_{1}(^{\ast}b)d_{2} \cdot \psi(n) \\
&=\begin{pmatrix}
-d_{1}\pi_{1}' p' \xi_{be_{k}}(d_{2}\psi(n)) \\
-d_{1} \gamma' \xi_{be_{k}}(d_{2}\psi(n)) \\
0 \\
0
\end{pmatrix}
\end{align*}

\vspace{0.1in}

In what follows, we will use the notation of \ref{4.2} for the projection maps associated to $\overline{N'}_{k}$. \\

We now show that $\Pi_{1}'\left(\widetilde{\psi}(d_{1}\overline{N}(^{\ast}b)(d_{2}n))\right)=\Pi_{1}' \left(d_{1}(^{\ast}b)d_{2} \cdot \psi(n)\right)$. \\

On one hand
\begin{center}
$\Pi_{1}'\left(\widetilde{\psi}(d_{1}\overline{N}(^{\ast}b)(d_{2}n))\right)=\pi_{1}' \left(-d_{1}\psi_{1}p\xi_{be_{k}}(d_{2}n)\right)$
\end{center}

and on the other hand
\begin{align*}
\Pi_{1}' \left(d_{1}(^{\ast}b)d_{2} \cdot \psi(n)\right)&=-d_{1}\pi_{1}' p' \xi_{be_{k}}(d_{2}\psi(n)) \\
&=-d_{1}\pi_{1}' p' \hat{\psi}_{o} \xi_{be_{k}}(d_{2}n)
\end{align*}

By \ref{4.11} we have $p' \hat{\psi}_{0}=\psi_{1}p$ and thus \\
\begin{center}
$-d_{1}\pi_{1}' p' \hat{\psi}_{o} \xi_{be_{k}}(d_{2}n)= -d_{1} \pi_{1}' \psi_{1}p \xi_{be_{k}}(d_{2}n)=\pi_{1}'\left(-d_{1}\psi_{1}p\xi_{be_{k}}(d_{2}n)\right)$
\end{center}

Therefore $\Pi_{1}'\left(\widetilde{\psi}(d_{1}\overline{N}(^{\ast}b)(d_{2}n))\right)=\Pi_{1}' \left(d_{1}(^{\ast}b)d_{2} \cdot \psi(n)\right)$, as was to be shown. \\

We now verify that $\Pi_{2}'\left(\widetilde{\psi}(d_{1}\overline{N}(^{\ast}b)(d_{2}n))\right)=\Pi_{2}' \left(d_{1}(^{\ast}b)d_{2} \cdot \psi(n)\right)$. We have
\begin{align*}
\Pi_{2}'\left(\widetilde{\psi}(d_{1}\overline{N}(^{\ast}b)(d_{2}n))\right)&=-d_{1}\hat{\psi}_{i}\gamma' \xi_{be_{k}}(d_{2}n) \\
&=-d_{1}\gamma' \hat{\psi}_{o} \xi_{be_{k}}(d_{2}n) \\
&=-d_{1} \gamma' \xi_{be_{k}} (d_{2}\widetilde{\psi}(n))\\
&=\Pi_{2}' \left(d_{1}(^{\ast}b)d_{2} \cdot \psi(n)\right)
\end{align*}

Let $a \in _{k}T$, $d_{1} \in D_{\tau(a)}$, $d_{2} \in D_{k}$ and $w \in \overline{N}_{k}$. We now prove the following equality: \\

\begin{center}
$\widetilde{\psi}(d_{1}a^{\ast}d_{2} \cdot w)=d_{1}a^{\ast}d_{2} \widetilde{\psi}(w)$
\end{center}

Using \ref{4.3} we obtain
\begin{align*}
\widetilde{\psi}(d_{1}a^{\ast} d_{2} w)&=\widetilde{\psi}\left(d_{1}c_{k}^{-1}\pi_{e_{k}a}i \Pi_{2}(d_{2}w)+d_{1}c_{k}^{-1}\pi_{e_{k}a}j'\sigma_{2}\Pi_{3}(d_{2}w)\right) \\
&=d_{1}c_{k}^{-1}\psi\left(\pi_{e_{k}a}i\Pi_{2}(d_{2}w)\right)+d_{1}c_{k}^{-1}\psi \left(\pi_{e_{k}a}j'\sigma_{2}\Pi_{3}(d_{2}w)\right) \\
\end{align*}

On the other hand
\begin{align*}
d_{1}a^{\ast}d_{2}\widetilde{\psi}(w)&=d_{1}\overline{N}'(a^{\ast})(d_{2}\widetilde{\psi}(w)) \\
&=d_{1}c_{k}^{-1}\pi_{e_{k}a}'i \hat{\psi}_{i} \Pi_{2}(d_{2}w)+d_{1}c_{k}^{-1}\pi_{e_{k}a}j'\sigma_{2}'\overline{\psi}_{2}(\Pi_{3}(d_{2}w)))
\end{align*}

and thus (b) follows. This completes the proof of Proposition \ref{prop6}. \\
\end{proof}

\section{Mutation of decorated representations} \label{sec5}

Let $(\mathcal{F}_{S}(M),P)$ be an algebra with potential such that $P^{(2)}$ is splittable. By \cite[Theorem 7.15]{2} there exists a decomposition of $S$-bimodules $M=M_{1} \oplus \Xi_{2}(P)$ and a unitriangular automorphism $\varphi: \mathcal{F}_{S}(M) \rightarrow \mathcal{F}_{S}(M)$ such that $\varphi(P)$ is cyclically equivalent to $Q^{\geq 3} \oplus Q^{(2)}$ where $Q^{\geq 3}$ is a reduced potential in $\mathcal{F}_{S}(M_{1})$ and $Q^{(2)}$ is a trivial potential in $\mathcal{F}_{S}(\Xi_{2}(P))$. \\

We have that $\widehat{\mathcal{N}}$ is a decorated representation of  $(\mathcal{F}_{S}(M),Q^{\geq 3} \oplus Q^{(2)})$. Therefore, $\widehat{\mathcal{N}}|_{\mathcal{F}_{S}(M_{1})}$ is a decorated representation of $(\mathcal{F}_{S}(M_{1}),Q^{\geq 3})$.

\begin{definition} We will refer to the decorated representation $\widehat{\mathcal{N}}|_{\mathcal{F}_{S}(M_{1})}$ as the reduced part of $\mathcal{N}$ and will be denoted by $\mathcal{N}_{red}$.
\end{definition}

Let $k$ be a fixed integer in $[1,n]$ and let $M=M_{1} \oplus M_{2}$ be a $Z$-freely generated $S$-bimodule such that for every $i$, $e_{i}Me_{k} \neq 0$ implies $e_{k}Me_{i}=0$ and likewise $e_{k}Me_{i} \neq 0$ implies $e_{i}Me_{k}=0$. 

Let $T_{1}$ be a $Z$-free generating set of $M_{1}$ and let $T_{2}$ be a $Z$-free generating set of $M_{2}$. Let $P_{1}$ be a potential in $\mathcal{F}_{S}(M_{1})$ and  let $W=\displaystyle \sum_{i=1}^{l} c_{i}d_{i}$ be a trivial potential in $\mathcal{F}_{S}(M_{2})$, where $\{c_{1},\hdots,c_{l},d_{1}\hdots, d_{l}\}=T_{2}$.

Suppose that $e_{k}P_{1}e_{k}=0$ and consider the potential $P=P_{1}+W$ in $\mathcal{F}_{S}(M_{1} \oplus M_{2})$. Since $e_{k}Pe_{k}=0$, then we can consider

\begin{center}
$\mu_{k}(P)=\mu_{k}(P_{1})+W$
\end{center}

Note that
\begin{center}
$\mu_{k}M=\mu_{k}M_{1} \oplus M_{2}$
\end{center}

Let $\mathcal{N}=(N,V)$ be a decorated representation of $(\mathcal{F}_{S}(M),P)$. Recall that $\widetilde{\mu}_{k}(\mathcal{N})_{|\mathcal{F}_{S}(\mu_{k}M_{1})}$ is a decorated representation of $(\mathcal{F}_{S}(\mu_{k}M_{1}),\mu_{k}P_{1})$. Recall also that $\mathcal{N}_{| \mathcal{F}_{S}(M_{1})}$ is a decorated representation of $(\mathcal{F}_{S}(M_{1}),P_{1})$.

\begin{lemma} \label{lem9} The following equality holds: $\widetilde{\mu}_{k}(\mathcal{N}_{| \mathcal{F}_{S}(M_{1})})=(\widetilde{\mu}_{k}(\mathcal{N}))_{| \mathcal{F}_{S}(\mu_{k}M_{1})}$.
\end{lemma}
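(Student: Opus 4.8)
## Proof plan for Lemma \ref{lem9}

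The plan is to unwind both sides of the claimed equality at the level of the underlying $F$-vector spaces and the action of each $Z$-free generator of $\mu_k M_1$, and to check that they agree summand by summand. The key observation driving everything is that the maps $\alpha,\beta,\gamma$ attached to $\mathcal{N}$ and those attached to $\mathcal{N}_{|\mathcal{F}_S(M_1)}$ are \emph{literally the same}: the spaces $N_{in}$ and $N_{out}$ are built only from the generators $a\in {}_kT_1$ and $b\in (T_1)_k$ — since $M_2$ contributes nothing to $e_kM$ or $Me_k$ once we restrict, and in fact $e_kM_2=M_2e_k=0$ is not needed because $W$ is a trivial potential living in $\mathcal{F}_S(M_2)$ and $X_{a^\ast}(W)=0$ for $a\in {}_kT_1$, so $Y_{[bwa]}(P)=Y_{[bwa]}(P_1)$ for all relevant $b,w,a$. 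Hence $\gamma$ for $\mathcal{N}$ (computed via $Y_{[bwa]}(P)$) coincides with $\gamma$ for $\mathcal{N}_{|\mathcal{F}_S(M_1)}$ (computed via $Y_{[bwa]}(P_1)$), and similarly $\alpha,\beta$ depend only on multiplication by $ra$ and $br$ with $a\in{}_kT_1$, $b\in(T_1)_k$, which is unaffected by restriction of scalars. First I would state this coincidence precisely as the crux of the argument.

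Given that, both $\overline{N}_k$ and $\overline{V}_k$ are defined by the very same formulas — $\ker(\gamma)/\mathrm{im}(\beta)\oplus\mathrm{im}(\gamma)\oplus\ker(\alpha)/\mathrm{im}(\gamma)\oplus V_k$ and $\ker(\beta)/(\ker(\beta)\cap\mathrm{im}(\alpha))$ — whether one first restricts and then premutates, or first premutates and then restricts; so the two pairs of $F$-vector spaces $(\overline{N},\overline{V})$ coincide on the nose once we also choose the \emph{same} splitting data $p\colon N_{out}\to\ker\gamma$ and $\sigma_2\colon\ker\alpha/\mathrm{im}\gamma\to\ker\alpha$ on both sides (legitimate, since the isoclass is independent of that choice by the Proposition following Lemma \ref{lem3}). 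What remains is to check that the two $\mathcal{F}_S(\mu_k M_1)$-module structures on this common vector space agree. For this I would run through the $Z$-free generators of $\mu_k M_1$ exactly as in the Remark preceding Lemma \ref{lem3} and the construction of $\overline{N}$: for $c\in T_1\cap\bar e_k M_1\bar e_k$ the action is $\rho_1(c)_{\overline N}$, which by (the $M_1$-analogue of) Lemma \ref{lem3} equals $c_N$ restricted, matching the left-hand side; for $c=\rho_1(bra)$ with $b\in(T_1)_k$, $a\in{}_kT_1$ the action is $(bra)_N$ on both sides; for $a^\ast$ with $a\in{}_kT_1$ the maps $\overline N(a^\ast)$ are defined by \ref{4.3} purely through $\pi_{e_ka}$, $i$, $j'$, $\sigma_2$, which are the same objects; and likewise for ${}^\ast b$, $b\in(T_1)_k$, via \ref{4.4}. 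Since $\widetilde\mu_k(\mathcal N)_{|\mathcal F_S(\mu_kM_1)}$ is by definition obtained from $\widetilde\mu_k(\mathcal N)$ simply by forgetting the action of the $M_2$-generators, and the $\mu_kM_1$-generators act identically, the two decorated representations are equal — not merely right-equivalent.

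The main obstacle, and the one point requiring genuine care rather than bookkeeping, is the compatibility of the two isomorphisms $\rho$: on the left-hand side one uses the isomorphism $\rho_1\colon\mathcal F_S(M_1)_{\hat k,\hat k}\to\mathcal F_S((\mu_kM_1)_{\hat k,\hat k})$, while on the right-hand side the action of a $\mu_kM_1$-generator inside $\mathcal F_S(\mu_kM)_{\hat k,\hat k}$ is transported via the big $\rho\colon\mathcal F_S(M)_{\hat k,\hat k}\to\mathcal F_S((\mu_kM)_{\hat k,\hat k})$. I would resolve this by invoking the explicit description of $\rho$ from \cite[Lemma 9.2]{2}: $\rho$ respects the direct sum decomposition $M=M_1\oplus M_2$ and restricts to $\rho_1$ on the $M_1$-part (since $\rho$ is built generator-by-generator and the $M_2$-generators are sent to themselves, a trivial potential contributing nothing to the substitution $bra\mapsto[bra]$). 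Hence $\rho_1(c)_{\overline N}=\rho(c)_{\overline N}=c_N$ by Lemma \ref{lem3}, closing the last gap. With this, assembling the four cases gives the identity of module structures, hence the equality of decorated representations claimed in the Lemma.
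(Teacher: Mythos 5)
Your proof follows essentially the same route as the paper's: both rest on the observation that $N_{in}$, $N_{out}$ and the maps $\alpha,\beta,\gamma$ are unchanged by restriction to $\mathcal{F}_S(M_1)$ (the last because $Y_{[bra]}(P_1+W)=Y_{[bra]}(P_1)$), so the premutated spaces, splitting data and generator actions can be taken to coincide on the nose. One small correction: your aside that ``$e_kM_2=M_2e_k=0$ is not needed'' is mistaken --- the identifications $N'_{out}=N_{out}$ and $N'_{in}=N_{in}$ (and indeed the decomposition $\mu_kM=\mu_kM_1\oplus M_2$) do require $e_kT_2=T_2e_k=0$, which the paper records explicitly and which holds here because $W$ is trivial and no $2$-cycles pass through $k$.
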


\begin{proof} Let $T_{1}$ be a $Z$-free generating set of $(M_{1})_{0}$ and let $T_{2}$ be a $Z$-free generating set of $(M_{2})_{0}$. Then

\begin{center}
$W=\displaystyle \sum_{s=1}^{l} c_{s}d_{s}$
\end{center}

where $T_{2}=\{c_{1}, \hdots, c_{l}, d_{1} \hdots, d_{l}\}$. Note that $e_{k}T_{2}=T_{2}e_{k}=0$. We have \\

\begin{center}
$\mathcal{N}_{| \mathcal{F}_{S}(M_{1})}=(N_{| \mathcal{F}_{S}(M_{1})},V)$
\end{center}

Define $N'=N_{\mathcal{F}_{S}(M_{1})}$. For $i \neq k$, $\overline{N'}_{i}=N'_{i}=N_{i}=\overline{N}_{i}$. On the other hand \\

\begin{center}
$N'_{out}=\displaystyle \bigoplus_{b \in (T_{1})_{k}} D_{k} \otimes_{F} N_{\sigma(b)}= \displaystyle \bigoplus_{b \in T_{k}} D_{k} \otimes_{F} N_{\sigma(b)}=N_{out}$
\end{center}

Similarly, $N'_{in}=N_{in}$. Now let $\alpha': N'_{in} \rightarrow N'_{k}$, $\beta': N'_{k} \rightarrow N'_{out}$ and $\gamma': N'_{out} \rightarrow N'_{in}$ be the maps associated to the representation $\overline{N'}$. Clearly $\alpha=\alpha'$, $\beta=\beta'$. Also:

\begin{center}
$Y_{[bra]}(P_{1}+W)=Y_{[bra]}(P_{1})$
\end{center}

and thus $\gamma=\gamma'$. Then $\overline{N'}_{k}=\overline{N}_{k}$ and it follows that $\overline{N'}=\overline{N}$ as left $S$-modules. Since the action of $\mathcal{F}_{S}(\mu_{k}M_{1})$ in $\overline{N'}$ and $\overline{N}$ is the same, then the claim follows.

\end{proof}

\begin{prop} \label{prop7} Let $(\mathcal{F}_{S}(M),P)$ be an algebra with potential where $P^{(2)}$ is splittable. Suppose that $e_{k}Pe_{k}=0$ and that $(M \otimes_{S} e_{k}M)_{cyc}=0$. For every decorated representation $\mathcal{N}$ of $(\mathcal{F}_{S}(M),P)$, the decorated representation $\widetilde{\mu_{k}}(\mathcal{N})_{red}$ is right-equivalent to $\widetilde{\mu_{k}}(\mathcal{N}_{red})_{red}$.
\end{prop}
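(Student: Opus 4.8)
The plan is to reduce the statement to a general fact saying that the reduced part of a decorated representation is unchanged upon adjoining a direct summand on which the potential is trivial, and then to deduce that fact from the splitting theorem \cite[Theorem 7.15]{2}. First I record the relevant data: by \cite[Theorem 7.15]{2} there are a decomposition $M=M_{1}\oplus\Xi_{2}(P)$ and a unitriangular automorphism $\varphi\colon\mathcal{F}_{S}(M)\to\mathcal{F}_{S}(M)$ with $\varphi(P)$ cyclically equivalent to $Q^{\geq 3}+Q^{(2)}$, where $Q^{\geq 3}$ is reduced in $\mathcal{F}_{S}(M_{1})$ and $Q^{(2)}$ is trivial in $\mathcal{F}_{S}(\Xi_{2}(P))$, and by definition $\mathcal{N}_{red}=\widehat{\mathcal{N}}|_{\mathcal{F}_{S}(M_{1})}$ where $\widehat{\mathcal{N}}={}^{\varphi^{-1}}\mathcal{N}$ is the associated decorated representation of $(\mathcal{F}_{S}(M),\varphi(P))$. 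Since $R(\cdot)$, the elements $Y_{[bra]}(\cdot)$, and hence the entire premutation construction $\widetilde{\mu_{k}}(\cdot)$, depend on the potential only through its cyclic-equivalence class, I will replace $\varphi(P)$ by $Q^{\geq 3}+Q^{(2)}$. The hypothesis $(M\otimes_{S}e_{k}M)_{cyc}=0$ says there is no cyclic word of length two through $k$; in particular no trivial pair is legible through $k$, so $e_{k}\Xi_{2}(P)=\Xi_{2}(P)e_{k}=0$. Hence $\mu_{k}M=\mu_{k}M_{1}\oplus\Xi_{2}(P)$, and by \cite[Proposition 8.15]{2}, together with the fact that $Q^{(2)}$ does not involve $k$, $\mu_{k}(\varphi(P))$ is cyclically equivalent to $\mu_{k}(Q^{\geq 3})+Q^{(2)}$ with $Q^{(2)}$ still trivial in $\mathcal{F}_{S}(\Xi_{2}(P))$; thus $\widehat{\mathcal{N}}$ is a decorated representation of $(\mathcal{F}_{S}(M_{1}\oplus\Xi_{2}(P)),Q^{\geq 3}+Q^{(2)})$ of exactly the type to which Lemma \ref{lem9} applies.

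Next, since $\varphi$ is unitriangular and $e_{k}Pe_{k}=0$, Proposition \ref{prop5} provides an isomorphism of decorated representations between $\widetilde{\mu_{k}}(\mathcal{N})$ and $\widetilde{\mu_{k}}(\widehat{\mathcal{N}})$ (equivalently, $\mathcal{N}$ is right-equivalent to $\widehat{\mathcal{N}}$ via $(\varphi,id,id)$, so Proposition \ref{prop6} gives that $\widetilde{\mu_{k}}(\mathcal{N})$ and $\widetilde{\mu_{k}}(\widehat{\mathcal{N}})$ are right-equivalent). Invoking the fact that the reduced part is well defined on right-equivalence classes of decorated representations — which I would establish in the same way as the first Proposition of Section \ref{sec3} — it follows that $\widetilde{\mu_{k}}(\mathcal{N})_{red}$ is right-equivalent to $\widetilde{\mu_{k}}(\widehat{\mathcal{N}})_{red}$. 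On the other hand, Lemma \ref{lem9} applied to $\widehat{\mathcal{N}}$ gives $\widetilde{\mu_{k}}(\mathcal{N}_{red})=\widetilde{\mu_{k}}(\widehat{\mathcal{N}}|_{\mathcal{F}_{S}(M_{1})})=\widetilde{\mu_{k}}(\widehat{\mathcal{N}})|_{\mathcal{F}_{S}(\mu_{k}M_{1})}$, whence $\widetilde{\mu_{k}}(\mathcal{N}_{red})_{red}=(\widetilde{\mu_{k}}(\widehat{\mathcal{N}})|_{\mathcal{F}_{S}(\mu_{k}M_{1})})_{red}$. Writing $\mathcal{M}:=\widetilde{\mu_{k}}(\widehat{\mathcal{N}})$, a decorated representation of $(\mathcal{F}_{S}(\mu_{k}M_{1}\oplus\Xi_{2}(P)),\mu_{k}(Q^{\geq 3})+Q^{(2)})$, the proposition is thereby reduced to the claim that $\mathcal{M}_{red}$ is right-equivalent to $(\mathcal{M}|_{\mathcal{F}_{S}(\mu_{k}M_{1})})_{red}$.

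I would prove this as a general fact: if $\mathcal{M}$ is a decorated representation of $(\mathcal{F}_{S}(A\oplus B),R+W)$ with $R$ a potential in $\mathcal{F}_{S}(A)$ whose degree-two part $R^{(2)}$ is splittable and $W$ a trivial potential in $\mathcal{F}_{S}(B)$, then $\mathcal{M}_{red}$ is right-equivalent to $(\mathcal{M}|_{\mathcal{F}_{S}(A)})_{red}$. Indeed, splitting $R$ by \cite[Theorem 7.15]{2} yields a decomposition $A=A_{1}\oplus\Xi_{2}(R)$ and a unitriangular automorphism $\lambda$ of $\mathcal{F}_{S}(A)$ with $\lambda(R)$ cyclically equivalent to $R_{1}+R_{2}$, $R_{1}$ reduced in $\mathcal{F}_{S}(A_{1})$ and $R_{2}$ trivial in $\mathcal{F}_{S}(\Xi_{2}(R))$. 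Extending $\lambda$ by the identity of $\mathcal{F}_{S}(B)$ gives a unitriangular automorphism $\widetilde{\lambda}$ of $\mathcal{F}_{S}(A\oplus B)$ with $\widetilde{\lambda}(R+W)=\lambda(R)+W$ cyclically equivalent to $R_{1}+(R_{2}+W)$, where $R_{1}$ is reduced in $\mathcal{F}_{S}(A_{1})$ and $R_{2}+W$ is trivial in $\mathcal{F}_{S}(\Xi_{2}(R)\oplus B)$; so $R_{1}+(R_{2}+W)$ is a split form of $R+W$ whose reduced bimodule is again $A_{1}$. Computing the reduced part from this split form, and using that $\widetilde{\lambda}$ restricts to $\lambda$ on $\mathcal{F}_{S}(A)$ with $\mathcal{F}_{S}(A_{1})\subseteq\mathcal{F}_{S}(A)\subseteq\mathcal{F}_{S}(A\oplus B)$, one obtains that $\mathcal{M}_{red}$ is right-equivalent to ${}^{\widetilde{\lambda}^{-1}}\mathcal{M}|_{\mathcal{F}_{S}(A_{1})}$, which coincides with ${}^{\lambda^{-1}}(\mathcal{M}|_{\mathcal{F}_{S}(A)})|_{\mathcal{F}_{S}(A_{1})}$ (because $\widetilde{\lambda}^{-1}$ and $\lambda^{-1}$ agree on $\mathcal{F}_{S}(A_{1})$ and the two module actions agree), and the latter is by definition right-equivalent to $(\mathcal{M}|_{\mathcal{F}_{S}(A)})_{red}$. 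Applying this with $A=\mu_{k}M_{1}$, $B=\Xi_{2}(P)$, $R=\mu_{k}(Q^{\geq 3})$, $W=Q^{(2)}$ and $\mathcal{M}=\widetilde{\mu_{k}}(\widehat{\mathcal{N}})$ — noting that $\mu_{k}(Q^{\geq 3})^{(2)}$ is splittable, as is already required for the reduced parts in the statement to be defined — completes the argument.

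The step I expect to be the main obstacle is this general fact: it rests on verifying that \cite[Theorem 7.15]{2} is compatible with orthogonal direct sums, so that a split form of $R$ produces a split form of $R+W$ with the same reduced bimodule, and on having first established that the reduced part of a decorated representation is well defined up to right-equivalence — the analogue for decorated representations of the argument behind the first Proposition of Section \ref{sec3}. Everything else is formal manipulation with Lemma \ref{lem9}, Proposition \ref{prop5} and Proposition \ref{prop6}, once one has recorded that $\widetilde{\mu_{k}}(\cdot)$ factors through cyclic-equivalence classes of potentials.
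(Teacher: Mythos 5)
Your proposal is correct and follows essentially the same route as the paper: split $P$ via \cite[Theorem 7.15]{2}, pass from $\mathcal{N}$ to $\widehat{\mathcal{N}}$ using Proposition \ref{prop5}/Theorem \ref{teo1}, commute restriction with premutation via Lemma \ref{lem9}, and observe that the trivial summand $\Xi_{2}(P)$ (which avoids $k$) contributes only to the trivial part of the split form of $\mu_{k}P$, so that both reduced parts are computed on the same bimodule $M_{2}=A_{1}$. The only difference is organizational — you package the last step as a standalone lemma and appeal to well-definedness of the reduced part on right-equivalence classes, where the paper instead chains the explicit automorphisms $\hat{\varphi}_{2}\hat{\varphi}_{1}$ — and your implicit reliance on that well-definedness matches the paper's own level of rigor.
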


\begin{proof}
By \cite[Theorem 7.15]{2} there exists a unitriangular automorphism: 

\begin{center}
$\varphi_{1}: \mathcal{F}_{S}(M) \rightarrow \mathcal{F}_{S}(M)$
\end{center}

\vspace{0.1in}
and a decomposition of $S$-bimodules $M=M_{1} \oplus \Xi_{2}(P)$ such that $\varphi_{1}(P)$ is cyclically equivalent to $Q+W_{1}$ where $Q$ is a reduced potential in $\mathcal{F}_{S}(M_{1})$ and $W_{1}$ is a trivial potential in $\mathcal{F}_{S}(\Xi_{2}(P))$. Then $\mu_{k}(\varphi_{1}(P))$ is cyclically equivalent to $\mu_{k}(Q)+W_{1}$. By \cite[Theorem 8.12]{2} there exists a unitriangular automorphism:  
\begin{center}
$\varphi_{2}: \mathcal{F}_{S}(\mu_{k}M_{1}) \rightarrow \mathcal{F}_{S}(\mu_{k}M_{1})$
\end{center}

and a decomposition of $S$-bimodules:

\begin{center}
$\mu_{k}M_{1}=M_{2} \oplus \Xi_{2}(\mu_{k}(Q))$
\end{center}

such that $\varphi_{2}(\mu_{k}Q)=Q_{1}+W_{2}$ where $Q_{1}$ is a reduced potential in $\mathcal{F}_{S}(M_{2})$ and $W_{2}$ is a trivial potential in $\mathcal{F}_{S}(\Xi_{2}(\mu_{k}(Q)))$. Let $\hat{\varphi}_{2}$ be the unitriangular automorphism of $\mathcal{F}_{S}(\mu_{k}M)$ extending $\varphi_{2}$ and such that $\hat{\varphi}_{2}$ is equal to the identity in $\mathcal{F}_{S}(\Xi_{2}(P))$. 
Now consider the algebra automorphism $\hat{\varphi}_{1}$ of $\mathcal{F}_{S}(\mu_{k}M)$ given by Theorem \ref{teo1}. Then \\

\begin{enumerate}[(a)]
\item $\widehat{\varphi}_{1}(\mu_{k}P)$ is cyclically equivalent to $\mu_{k}(\varphi_{1}(P))$.
\item There exists an isomorphism of decorated representations $\widetilde{\mu_{k}}(\mathcal{N}) \rightarrow \widetilde{\mu_{k}}(^{\varphi_{1}}{\mathcal{N}})$
\end{enumerate}

where $^{\varphi_{1}}{\mathcal{N}}$ denotes the representation $\widehat{\mathcal{N}}$ (to emphasize the dependance of the action on $\varphi_{1}$). We have that $\widehat{\varphi_{2}}\widehat{\varphi}_{1}(\mu_{k}P)$ is cyclically equivalent to $\widehat{\varphi}_{2}\mu_{k}(\varphi_{1}(P))$ and the latter is cyclically equivalent to $\widehat{\varphi}_{2}(\mu_{k}Q+W_{1})=\varphi_{2}(\mu_{k}(Q))+W_{1}$; also, the latter potential is cyclically equivalent to $Q_{1}+W_{1}+W_{2}$ and

\begin{center}
$\mu_{k}M=M_{2} \oplus \Xi_{2}(\mu_{k}Q) \oplus \Xi_{2}(P)$
\end{center}

with $Q_{1}$ a reduced potential in $\mathcal{F}_{S}(M_{2})$ and $W_{1}+W_{2}$ a trivial potential in $\mathcal{F}_{S}(\Xi_{2}(\mu_{k}Q) \oplus \Xi_{2}(P))$. Then $(\widetilde{\mu_{k}}(\mathcal{N}))_{red}$ is isomorphic to $^{\hat{\varphi}_{2}}(\widetilde{\mu_{k}}(^{\varphi_{1}}\mathcal{N}))_{| \mathcal{F}_{S}(M_{2})}$. Note that, by Lemma \ref{lem9}, $^{\widehat{\varphi}_{2}} \widetilde{\mu_{k}}(^{\varphi_{1}}\mathcal{N})_{|\mathcal{F}_{S}(\mu_{k}M_{1})}$ is isomorphic to $^{\widehat{\varphi}_{2}} \widetilde{\mu_{k}}(^{\varphi_{1}}\mathcal{N}_{|\mathcal{F}_{S}(M_{1})})$. The claim follows.
\end{proof}
\begin{definition} \label{def13} Let $(N,V)$ be a decorated representation of the algebra with potential $(\mathcal{F}_{S}(M),P)$ where $P$ is reduced. We define the mutation of the decorated representation $\mathcal{N}$ in $k$ as: \\

\begin{center}
$\mu_{k}(\mathcal{N}):=\widetilde{\mu_{k}}(\mathcal{N})_{red}$
\end{center}
\end{definition}

\begin{cor} The correspondence $\mathcal{N} \mapsto \mu_{k}(\mathcal{N})$ is a well-defined transformation on the set of right-equivalence classes of decorated representations of reduced algebras with potentials.
\end{cor}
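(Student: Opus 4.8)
The plan is to deduce the corollary from Proposition~\ref{prop6} together with the fact that the passage to the reduced part of a decorated representation respects right-equivalence. First I would check that $\mathcal{N}\mapsto\mu_k(\mathcal{N})$ takes values among decorated representations of reduced algebras with potential. Indeed, by the proposition showing that $\widetilde{\mu_k}(\mathcal{N})=(\overline{N},\overline{V})$ is a decorated representation of $(\mathcal{F}_S(\widetilde{M}),\widetilde{P})$, and since $\widetilde{P}^{(2)}$ is splittable, Definition~\ref{def13} applies; and by the construction of the reduced part via \cite[Theorem 7.15]{2}, the representation $\mu_k(\mathcal{N})=\widetilde{\mu_k}(\mathcal{N})_{red}$ is, by the very definition of the reduced part, a decorated representation of a \emph{reduced} algebra with potential.

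For the independence on the right-equivalence class, let $\mathcal{N}=(N,V)$ and $\mathcal{N}'=(N',V')$ be right-equivalent decorated representations of reduced algebras with potential $(\mathcal{F}_S(M),P)$ and $(\mathcal{F}_S(M'),P')$. By Proposition~\ref{prop6} the premutated representations $\widetilde{\mu_k}(\mathcal{N})$ and $\widetilde{\mu_k}(\mathcal{N}')$ are right-equivalent; these are decorated representations of $(\mathcal{F}_S(\widetilde{M}),\widetilde{P})$ and $(\mathcal{F}_S(\widetilde{M'}),\widetilde{P'})$, with $\widetilde{P}$ and $\widetilde{P'}$ related by the underlying right-equivalence. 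It therefore suffices to prove the following statement: if $\mathcal{M}$ and $\mathcal{M}'$ are right-equivalent decorated representations of algebras with potential $(\mathcal{F}_S(B),Q)$ and $(\mathcal{F}_S(B'),Q')$ with $Q^{(2)}$ and $Q'^{(2)}$ splittable, then $\mathcal{M}_{red}$ is right-equivalent to $\mathcal{M}'_{red}$. Applying this with $\mathcal{M}=\widetilde{\mu_k}(\mathcal{N})$ and $\mathcal{M}'=\widetilde{\mu_k}(\mathcal{N}')$ then yields $\mu_k(\mathcal{N})\approx\mu_k(\mathcal{N}')$.

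To prove that the reduced part is well defined up to right-equivalence I would argue as in the splitting theory of \cite{2} (the analogue of the corresponding statement for quivers with potentials). Let $(\chi,\psi,\eta)$ be the right-equivalence $\mathcal{M}\to\mathcal{M}'$, so $\chi\colon\mathcal{F}_S(B)\to\mathcal{F}_S(B')$ satisfies $\chi_{|S}=id_S$ and $\chi(Q)$ is cyclically equivalent to $Q'$. Applying \cite[Theorem 7.15]{2} to $Q$ and to $Q'$ gives decompositions $B=B_1\oplus\Xi_2(Q)$, $B'=B_1'\oplus\Xi_2(Q')$ and unitriangular automorphisms $\varphi_B$, $\varphi_{B'}$ such that $\varphi_B(Q)$ is cyclically equivalent to $R\oplus W$ and $\varphi_{B'}(Q')$ is cyclically equivalent to $R'\oplus W'$, with $R$, $R'$ reduced, $W$, $W'$ trivial, and $\mathcal{M}_{red}=\widehat{\mathcal{M}}|_{\mathcal{F}_S(B_1)}$, $\mathcal{M}'_{red}=\widehat{\mathcal{M}'}|_{\mathcal{F}_S(B_1')}$ in the notation of the paragraph preceding Proposition~\ref{prop5}. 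The composite $\theta=\varphi_{B'}\circ\chi\circ\varphi_B^{-1}$ carries $R\oplus W$ to a potential cyclically equivalent to $R'\oplus W'$; by the uniqueness clause of \cite[Theorem 7.15]{2} there is an isomorphism of $S$-bimodules $B_1\cong B_1'$, and after composing $\theta$ with a suitable unitriangular automorphism we may assume that $\theta$ splits as $\theta_1\oplus\theta_2$ with $\theta_1(R)$ cyclically equivalent to $R'$ and $\theta_2(W)$ cyclically equivalent to $W'$. Transporting the module structure of $\widehat{\mathcal{M}}$ along this splitting and recalling that $\mathcal{M}_{red}$ and $\mathcal{M}'_{red}$ are obtained by restriction to the reduced summand, the claim reduces to the proposition of Section~\ref{sec3} on restriction of a decorated representation along $\mathcal{F}_S(M_1)\hookrightarrow\mathcal{F}_S(M_1\oplus M_2)$, which then gives $\mathcal{M}_{red}\approx\mathcal{M}'_{red}$.

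The main obstacle is the splitting of the ambient right-equivalence $\theta$ in the last step: extracting from the uniqueness part of the Splitting Theorem \cite[Theorem 7.15]{2} a right-equivalence between the reduced parts that is compatible with the module actions of $\mathcal{M}$ and $\mathcal{M}'$, i.e. controlling how the underlying vector spaces and the multiplication operators transform under the factorization $\theta_1\oplus\theta_2$. Once this is settled, the remainder is a routine assembly of Proposition~\ref{prop6}, Definition~\ref{def13} and the restriction proposition of Section~\ref{sec3}.
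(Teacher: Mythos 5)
Your proposal is correct and follows essentially the route the paper intends: the corollary is stated without a separate proof, being the immediate combination of Proposition~\ref{prop6} (right-equivalence is preserved under premutation) with the first proposition of Section~\ref{sec3} (restriction to the reduced summand preserves right-equivalence of decorated representations of $P+W$ and $P'+W$), which is exactly the tool that resolves the ``splitting of $\theta$'' obstacle you flag at the end.
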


\begin{theorem} \label{theo2} The mutation $\mu_{k}$ of decorated representations is an involution; that is, for every decorated representation $\mathcal{N}$ of a reduced algebra with potential $(\mathcal{F}_{S}(M),P)$, the decorated representation $\mu_{k}^{2}(\mathcal{N})$ is right-equivalent to $\mathcal{N}$. 
\end{theorem}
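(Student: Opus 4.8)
The statement is the exact analogue of the involutivity theorem of Derksen–Weyman–Zelevinsky for quivers with potentials, and of the Buan–Iyama–Reiten–Smith statement at the level of representations; so the plan is to follow the structure of those proofs, using the machinery already assembled in this section and in \cite{2}. The plan is to first reduce, via Proposition \ref{prop7} together with \cite[Proposition 8.15]{2} (right-equivalence of double mutations of potentials) and Proposition \ref{prop6} (right-equivalence invariance of $\widetilde{\mu_k}$), to a purely ``premutated'' statement: it suffices to show that $\widetilde{\mu_k}^{\,2}(\mathcal{N})$, computed from the premutated potential $\widetilde{P}$ and then from its premutation $\widetilde{\widetilde{P}}$, is right-equivalent to $\mathcal{N}$ up to adding trivial direct summands (arrows in $Me_kM$ paired with their duals, and the twice-dualized arrows). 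Here one uses that at the level of bimodules $\mu_k\mu_k M \cong M \oplus (\text{trivial part})$, established in \cite[Lemma 8.7]{2} and the surrounding identifications $(e_kM)^{\ast\ast}\cong e_kM$, ${}^{\ast\ast}(Me_k)\cong Me_k$, which require the hypotheses on the dual bases \ref{2.1}--\ref{2.3}.

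The heart of the argument is then the linear-algebra computation at the vertex $k$. After premutating twice, the new space $\overline{\overline{N}}_k$ is built from the kernels and images of the maps $\bar\alpha,\bar\beta,\bar\gamma$ attached to $\widetilde{\mu_k}(\mathcal{N})$, and the key point is to identify these maps. The plan is to show, exactly as in \cite[Section 10]{6}, that the triangle $(\alpha,\beta,\gamma)$ is replaced by (a map right-equivalent to) $(\gamma'', \text{something}, \text{something})$ where the new $\bar\gamma$ is, up to the splitting isomorphisms chosen in (a),(b) on page \pageref{4.1}, the \emph{negative} of (a lift of) the old $\gamma$ — this is where the signs in \ref{4.4} and the factor $c_k=[D_k:F]$ in \ref{4.3} are designed to make things cancel. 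Concretely I would compute $\overline{\overline{N}}_k = \ker(\bar\gamma)/\operatorname{im}(\bar\beta) \oplus \operatorname{im}(\bar\gamma) \oplus \ker(\bar\alpha)/\operatorname{im}(\bar\gamma) \oplus \overline{V}_k$, use Lemma \ref{lem1} ($\bar\alpha\bar\gamma=0$, $\bar\gamma\bar\beta=0$) for $\widetilde{\mu_k}(\mathcal{N})$, and then use the explicit formulas \ref{4.3}, \ref{4.4} for $\overline{N}(a^\ast)$ and $\overline{N}({}^\ast b)$ to read off that $\bar\alpha$ restricted to $\operatorname{im}(\gamma) \subseteq \overline{N}_k$ is an isomorphism onto $\operatorname{im}(\gamma)$ inside $N_{in}$, that $\bar\beta$ hits $\ker(\gamma)/\operatorname{im}(\beta)$ injectively via $-\pi_1 p$, and that $\bar\gamma$ itself, on the middle summand, recovers $\gamma$ (again up to sign and the chosen splittings $p,\sigma_2$). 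From this one gets canonical identifications $\ker(\bar\gamma)\cong \operatorname{im}(\gamma)\oplus \ker(\alpha)/\operatorname{im}(\gamma)\oplus V_k$ (plus trivial part), $\operatorname{im}(\bar\beta)\cong \operatorname{im}(\gamma)$, $\operatorname{im}(\bar\gamma)\cong \operatorname{im}(\beta)$, and $\ker(\bar\alpha)/\operatorname{im}(\bar\gamma)\cong \ker(\gamma)/\operatorname{im}(\beta)$, whose direct sum telescopes back to $N_k \cong \ker(\alpha)/\operatorname{im}(\gamma)\oplus \operatorname{im}(\gamma)\oplus \ker(\gamma)/\operatorname{im}(\beta)\oplus$ (the missing piece, which is supplied by $\operatorname{im}(\beta)$) — i.e. one recovers $N_k$ from the snake/triangle, and $\overline{\overline{V}}_k \cong V_k$ similarly. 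The action of the doubly-premutated arrows then matches $\{a_N, b_N, (bra)_N\}$ by Lemma \ref{lem3} applied twice (the $\rho\circ\rho$ composite being, on $\mathcal{F}_S(M)_{\hat k,\hat k}$, the identity), and the maps $\overline{N}(a^{\ast\ast})$, $\overline{N}({}^{\ast\ast}b)$ reconstruct $a_N$ and $b_N$ — here one uses Lemma \ref{lem2} and the dual-basis identities to see the constants $c_k$ and $c_k^{-1}$ cancel.

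Finally I would package this as an explicit right-equivalence $(\theta,\Psi,\zeta)$: $\theta : \mathcal{F}_S(\mu_k\mu_k M)\to \mathcal{F}_S(M\oplus M_2')$ splitting off the trivial summand and matching $\widetilde{\widetilde{P}}$ with $P$ plus a trivial potential (invoke \cite[Theorem 7.15]{2} and \cite[Proposition 8.15]{2}), $\Psi$ the vector-space isomorphism assembled from the identifications above, and $\zeta$ on the decoration spaces; then restrict to $\mathcal{F}_S(M)$ and take reduced parts, using Proposition \ref{prop7} once more to commute ``take reduced part'' past the second mutation, to conclude $\mu_k^2(\mathcal{N})\simeq \mathcal{N}$. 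The main obstacle I anticipate is \textbf{bookkeeping of the splitting data and the signs}: $\widetilde{\mu_k}(\mathcal{N})$ depended on choices $p,\sigma_2$ (Proposition on page \pageref{4.1} shows the isoclass is independent of them), and when one premutates a second time one must choose splittings for $\bar\gamma,\bar\beta$; showing that \emph{some} choice makes the composite literally equal (not just isomorphic) to $\mathcal{N}\oplus(\text{trivial})$, and that the factor $c_k$ and the minus signs introduced in \ref{4.3}--\ref{4.4} cancel exactly rather than leaving a scalar, is the delicate part and is precisely why the hypotheses $\operatorname{char}(F)\nmid \operatorname{card}L(i)$ and the dual-basis relations \ref{2.1}--\ref{2.3} were imposed. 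The other genuine check, as in \cite{6}, is that no information is lost: that the four summands of $\overline{\overline{N}}_k$ together with $\overline{\overline{V}}_k$ reassemble $N_k$ and $V_k$ with no leftover — this is the ``homological'' content and follows formally once $\bar\gamma$ is identified with $\pm\gamma$ and $\ker/\operatorname{im}$ are computed as above.
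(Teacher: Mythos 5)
Your overall architecture matches the paper's: reduce via Proposition \ref{prop7} to showing that $\widetilde{\mu_k}^{2}(\mathcal{N})_{red}$ is right-equivalent to $\mathcal{N}$, invoke \cite[Theorem 8.17]{2} to split $\mu_k^2M$ and $\mu_k^2P$, keep track of the sign $\psi_0(b)=-b$, and then do linear algebra at the vertex $k$. But the linear algebra itself, which is the heart of the proof, is wrong as you describe it. The crucial identity is $\overline{\gamma}=c_k\,\beta\alpha$ (formula \ref{5.9} in the paper), obtained by computing $Y_{[a^{\ast}w(^{\ast}b)]}(\mu_kP)$ from the cubic term $\sum[btsa]((sa)^{\ast})(^{\ast}(bt))$ of $\widetilde{P}$ using the dual-basis identities \ref{2.1}--\ref{2.3}. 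The map $\overline{\gamma}$ is emphatically not ``the negative of a lift of the old $\gamma$'': it cannot be, since $\overline{\gamma}$ goes from $\overline{N}_{out}=N_{in}$ to $\overline{N}_{in}=N_{out}$, the opposite direction from $\gamma$, and the identification $\overline{\gamma}=\beta\alpha$ is exactly what Derksen--Weyman--Zelevinsky prove in the source you cite as your model. Consequently your claimed kernel/image computations are incorrect: the paper obtains $\operatorname{ker}(\overline{\alpha})=\operatorname{im}(\beta)$, $\operatorname{im}(\overline{\beta})=\operatorname{ker}(\alpha)$, $\operatorname{ker}(\overline{\gamma})=\operatorname{ker}(\beta\alpha)$, $\operatorname{im}(\overline{\gamma})=\operatorname{im}(\beta\alpha)$ and $\operatorname{ker}(\overline{\beta})=\operatorname{ker}(\gamma)/\operatorname{im}(\beta)\oplus 0\oplus 0\oplus V_k$, whereas you assert for instance $\operatorname{im}(\overline{\beta})\cong\operatorname{im}(\gamma)$ and $\operatorname{im}(\overline{\gamma})\cong\operatorname{im}(\beta)$, and your $\operatorname{ker}(\overline{\gamma})$ does not even lie in the correct space. (You have also interchanged the roles of $\overline{\alpha}$ and $\overline{\beta}$: the formula $-\pi_1p\,\xi_{be_k}$ from \ref{4.4} feeds into $\overline{\alpha}$, since $\overline{\alpha}\xi_{br}=r^{-1}\overline{N}(^{\ast}b)$.)

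The second genuine gap is the reassembly of $N_k$. You propose that the four summands of $\overline{\overline{N}}_k$ telescope to $\operatorname{ker}(\alpha)/\operatorname{im}(\gamma)\oplus\operatorname{im}(\gamma)\oplus\operatorname{ker}(\gamma)/\operatorname{im}(\beta)\oplus\operatorname{im}(\beta)$; those are subquotients of $N_{in}$ and $N_{out}$ and do not reconstruct $N_k$. What actually happens is that $\overline{\overline{N}}_k=\operatorname{ker}(\beta\alpha)/\operatorname{ker}(\alpha)\oplus\operatorname{im}(\beta\alpha)\oplus\operatorname{im}(\beta)/\operatorname{im}(\beta\alpha)\oplus\operatorname{ker}(\beta)/(\operatorname{ker}(\beta)\cap\operatorname{im}(\alpha))$, and one needs the four isomorphisms $\theta_1,\dots,\theta_4$ induced by $\alpha$ and $\beta$ to convert these into the subquotients $\operatorname{ker}(\beta)\cap\operatorname{im}(\alpha)$, $\operatorname{im}(\alpha)/(\operatorname{ker}(\beta)\cap\operatorname{im}(\alpha))$, $N_k/(\operatorname{ker}(\beta)+\operatorname{im}(\alpha))$ and $(\operatorname{ker}(\beta)+\operatorname{im}(\alpha))/\operatorname{im}(\alpha)$ of $N_k$ itself, which telescope along the filtration $0\subseteq\operatorname{ker}(\beta)\cap\operatorname{im}(\alpha)\subseteq\operatorname{im}(\alpha)\subseteq\operatorname{ker}(\beta)+\operatorname{im}(\alpha)\subseteq N_k$; an explicit $\psi$ built from sections $\sigma_1,\sigma_2,\sigma_3$ chosen to satisfy $\operatorname{im}(\sigma_1)=\alpha(\operatorname{ker}\overline{p})$, $\operatorname{im}(\sigma_2)\subseteq\operatorname{ker}(\beta)$, $\operatorname{im}(\beta\sigma_3)=\operatorname{im}(\overline{\sigma})$ then intertwines the doubly premutated arrow actions with $a_N$ and $-b_N$. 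Your reduction steps and your anticipation of the sign and of the constant $c_k$ are sound, but without $\overline{\gamma}=c_k\beta\alpha$ and the maps $\theta_i$ the argument does not close.
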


\begin{proof} Let $\mathcal{N}=(N,V)$ be a decorated representation of $(\mathcal{F}_{S}(M),P)$. First note that
\begin{align*}
\mu_{k}^{2}(\mathcal{N})&=\mu_{k}\left(\mu_{k}(\mathcal{N})\right) \\
&=\widetilde{\mu_{k}}(\mu_{k}\left(\mathcal{N}\right))_{red} \\
&=\widetilde{\mu_{k}}\left( \widetilde{\mu_{k}}(\mathcal{N}\right)_{red})_{red}
\end{align*}

In view of Proposition \ref{prop7}, $\widetilde{\mu_{k}}\left( \widetilde{\mu_{k}}(\mathcal{N}\right)_{red})_{red}$ is right-equivalent to $\widetilde{\mu_{k}} \left( \widetilde{\mu_{k}} (\mathcal{N})\right)_{red}=\widetilde{\mu_{k}}^{2}(\mathcal{N})_{red}$. Thus, it suffices to show that $\widetilde{\mu_{k}}^{2}\left(\mathcal{N}\right)_{red}$ is right-equivalent to $\mathcal{N}$. 
We write the representation $\widetilde{\mu_{k}}^{2}\left(\mathcal{N}\right)$ as $\left(\overline{\overline{N}}, \overline{\overline{V}}\right)$ and this representation is associated to the algebra with potential $\left(\mathcal{F}_{S}(\mu_{k}^{2}M),\mu_{k}^{2}P\right)$. By \cite[Proposition 8.8]{2} and \cite[Theorem 8.17]{2} we have:
\begin{align*}
\mu_{k}^{2}M&=M \oplus Me_{k}M \oplus M^{\ast}e_{k}(^{\ast}M) \\
\mu_{k}^{2}P&=\rho(P)+\displaystyle \sum_{bt,sa} \left( [btsa][(sa)^{\ast}(^{\ast}bt)]+[(sa)^{\ast}(^{\ast}(bt))](bt)(sa)\right)
\end{align*}

By \cite[Theorem 8.17]{2} there exists an algebra automorphism $\psi: \mathcal{F}_{S}(\mu_{k}^{2}M) \rightarrow \mathcal{F}_{S}(\mu_{k}^{2}M)$ such that $\psi(\mu_{k}^{2}P)$ is cyclically equivalent to $P \oplus W$ where $W$ is a trivial potential in $\mathcal{F}_{S}\left(Me_{k}M \oplus M^{\ast}e_{k}(^{\ast}M)\right)$. Such automorphism $\psi$ restricts to an automorphism $\psi_{0}: \mathcal{F}_{S}(M) \rightarrow \mathcal{F}_{S}(M)$ such that $\psi_{0}(b)=-b$ for every $b \in T_{k}$ and $\psi_{0}(x)=x$ for every $x \in T \setminus T_{k}$. In view of Definition \ref{def13}, the representation $\widetilde{\mu_{k}}(\mathcal{N})_{red}$ can be realized as $(\overline{\overline{N}},\overline{\overline{V}})$, the latter being associated to the algebra with potential $(\mathcal{F}_{S}(M),P)$, and the action of $\mathcal{F}_{S}(M)$ in $\overline{\overline{N}}$ is given by $u \cdot n = \psi_{0}^{-1}(u)n$. \\

Let us show that $\overline{\alpha}: N_{out}=\overline{N}_{in} \longrightarrow \overline{N}_{k}$ is the $F$-linear map such that for each $b \in T_{k}$ and $r \in L(k)$, $\overline{\alpha} \xi_{br}=r^{-1}\overline{N}(^{\ast}b)$. We have

\begin{align*}
\overline{\alpha}\xi_{br}(n)&=\displaystyle \sum_{w \in L(k)} \overline{\alpha}\xi_{w(^{\ast}b)}\pi_{w(^{\ast}b)}\xi_{br}(n) \\
&=\displaystyle \sum_{w \in L(k)} \overline{\alpha}\xi_{w(^{\ast}b)}\pi'_{w(^{\ast}b)}(r^{-1} \otimes n) \\
&=\displaystyle \sum_{w \in L(k)} \overline{\alpha}\xi_{w(^{\ast}b)}w^{\ast}(r^{-1})n \\
&=\displaystyle \sum_{w \in L(k)} \overline{N}(w(^{\ast}b))(w^{\ast}(r^{-1})n) \\
&=\overline{N} \left( \displaystyle \sum_{w \in L(k)} w^{\ast}(r^{-1})w(^{\ast}b)\right)(n) \\
&=\overline{N}(r^{-1}(^{\ast}b))(n) \\
&=r^{-1}\overline{N}(^{\ast}b)(n)
\end{align*}

We now show that $ker(\overline{\alpha})=\operatorname{im}(\beta)$. \\

Let $x \in N_{out}$, using \ref{3.15} we have $x=\displaystyle \sum_{b \in T_{k},r \in L(k)} \xi_{br} \pi_{br}(x)$. Therefore
\begin{align*}
\overline{\alpha}(x)&= \displaystyle \sum_{b \in T_{k}, r \in L(k)} \overline{\alpha} \xi_{br} \pi_{br}(x) \\
&=\displaystyle \sum_{b \in T_{k}, r \in L(k)}r^{-1} \overline{N}(^{\ast} b)\left(\pi_{br}(x)\right)
\end{align*}

\vspace{0.1in}

thus $\overline{\alpha}(x)=0$ if and only if $\Pi_{i} \left(\displaystyle \sum_{b \in T_{k}, r \in L(k)}r^{-1} \overline{N}(^{\ast} b)\left(\pi_{br}(x)\right)\right)=0$ for every $i \in \{1,2,3,4\}$. Remembering \ref{4.4} yields
\begin{equation} \label{5.1}
\Pi_{1}\left(\displaystyle \sum_{b \in T_{k}, r \in L(k)}r^{-1} \overline{N}(^{\ast} b)\left(\pi_{br}(x)\right)\right)= -\displaystyle \sum_{b \in T_{k}, r \in L(k)} r^{-1}\pi_{1} p \xi_{be_{k}}\pi_{br}(x) 
\end{equation}

\begin{equation} \label{5.2}
\Pi_{2}\left(\displaystyle \sum_{b \in T_{k}, r \in L(k)}r^{-1} \overline{N}(^{\ast} b)\left(\pi_{br}(x)\right)\right)= -\displaystyle \sum_{b \in T_{k}, r \in L(k)} r^{-1}\gamma' \xi_{be_{k}}\pi_{br}(x) 
\end{equation}

\vspace{0.1in}

so if $\overline{\alpha}(x)=0$ then \ref{5.2} implies that \\
\begin{equation} \label{5.3}
\displaystyle \sum_{b \in T_{k}, r \in L(k)} r^{-1} \xi_{be_{k}} \pi_{br}(x) \in \operatorname{ker}(\gamma)
\end{equation}

On the other hand, if $\overline{\alpha}(x)=0$ then by \ref{5.1}, $\displaystyle \sum_{b \in T_{k}, r \in L(k)} r^{-1}p \xi_{be_{k}} \pi_{br}(x) \in \operatorname{im}(\beta)$. Note that
\begin{center}
$\displaystyle \sum_{b \in T_{k}, r \in L(k)} r^{-1}p \xi_{be_{k}} \pi_{br}(x) =p \left( \displaystyle \sum_{b \in T_{k}, r \in L(k)} r^{-1}\xi_{be_{k}}\pi_{br}(x)\right)$
\end{center}

\vspace{0.2in}

Using \ref{5.3} and the fact that $p$ is a left inverse of the inclusion map $j: ker(\gamma) \rightarrow N_{out}$ yields $p \left( \displaystyle \sum_{b \in T_{k}, r \in L(k)} r^{-1}\xi_{be_{k}}\pi_{br}(x)\right)=\displaystyle \sum_{b \in T_{k}, r \in L(k)} r^{-1}\xi_{be_{k}}\pi_{br}(x)$. Finally, note that

\begin{center}
$\displaystyle \sum_{b \in T_{k}, r \in L(k)} r^{-1}\xi_{be_{k}}\pi_{br}(x)=\displaystyle \sum_{b \in T_{k}, r \in L(k)} \xi_{br} \pi_{br}(x)$
\end{center}

\vspace{0.1in}

and by \ref{3.15} $\displaystyle \sum_{b \in T_{k}, r \in L(k)} \xi_{br} \pi_{br}(x)=x$. Therefore, $\overline{\alpha}(x)=0$ if and only if $\displaystyle \sum_{b \in T_{k}, r \in L(k)} r^{-1}p \xi_{be_{k}} \pi_{br}(x)=x \in \operatorname{im}(\beta)$. This proves that: 
\begin{equation} \label{5.4}
ker(\overline{\alpha})=\operatorname{im}(\beta)
\end{equation}

\vspace{0.2in}

As a consequence, $\Pi_{1} \overline{\alpha}(x)=-\pi_{1}p$ y $\Pi_{2} \overline{\alpha} = - \gamma'$. Therefore

\begin{equation} \label{5.5}
\operatorname{im}(\overline{\alpha})=\frac{\operatorname{ker(\gamma)}}{\operatorname{im}(\beta)} \oplus \operatorname{im}(\gamma) \oplus \{0\} \oplus \{0\}
\end{equation}

\vspace{0.2in}

We now show that $\overline{\beta}: \overline{N}_{k} \longrightarrow \overline{N}_{out}=N_{in}$ is the $F$-linear map such that for each $r \in L(k)$ and $a \in _{k}T$, $\pi_{ra} \overline{\beta}=\overline{N}(a^{\ast})r^{-1}$. We have:
\begin{align*}
\pi_{ra}\overline{\beta}(n)&=\displaystyle \sum_{w \in L(k)} \pi_{ra} \xi_{a^{\ast}w} \pi_{a^{\ast}w} \overline{\beta}(n) \\
&=\displaystyle \sum_{w \in L(k)} \pi_{ra}\xi_{a^{\ast}w}\overline{N}(a^{\ast}w)(n) \\
&=\displaystyle \sum_{w \in L(k)} \pi'_{ra}\left(w^{-1} \otimes \overline{N}(a^{\ast}w)(n)\right) \\
&=\displaystyle \overline{N} \left(a^{\ast} \displaystyle \sum_{w \in L(k)} r^{\ast}(w^{-1})w\right)(n)
\end{align*}

By Remark \ref{rem3}, $\displaystyle \sum_{w \in L(k)} r^{\ast}(w^{-1})w=r^{-1}$. It follows that

\begin{center}
$\pi_{ra} \overline{\beta}(n)=\overline{N}(a^{\ast}r^{-1})(n)=\overline{N}(a^{\ast})(r^{-1}n)$
\end{center}

We now prove that
\begin{equation} \label{5.6}
\operatorname{ker}(\overline{\beta})=\frac{\operatorname{ker}(\gamma)}{\operatorname{im}(\beta)} \oplus \{0\} \oplus \{0\} \oplus V_{k} 
\end{equation}

Let $w \in \overline{N}_{k}$, then $w=\displaystyle \sum_{l=1}^{4} J_{l} \Pi_{l}(w)$. Suppose that $\overline{\beta}(w)=0$, then $\pi_{ra}\overline{\beta}(w)=0$ for every $r \in L(k)$ and $a \in _{k}T$. Using Lemma \ref{lem2} and \ref{4.3} we obtain the following equalities:

\begin{align*}
0=\pi_{ra} \overline{\beta}(w)&=\displaystyle \sum_{l=1}^{4} \pi_{ra} \overline{\beta} \left(J_{l} \Pi_{l}(w) \right) \\
&=\displaystyle \sum_{l=1}^{4} \overline{N}(a^{\ast})\left(J_{l}r^{-1}\Pi_{l}(w)\right) \\
&=\overline{N}(a^{\ast})J_{2}\left(r^{-1}\Pi_{2}(w)\right)+\overline{N}(a^{\ast})J_{3}\left(r^{-1}\Pi_{3}(w)\right) \\
&=c_{k}^{-1}\pi_{e_{k}a}i\left(r^{-1}\Pi_{2}(w)\right)+c_{k}^{-1}\pi_{e_{k}a}j'\sigma_{2}\left(r^{-1}\Pi_{3}(w)\right) \\
&=c_{k}^{-1}\pi_{ra}\left(i\Pi_{2}(w)\right)+c_{k}^{-1}\pi_{ra}\left(j'\sigma_{2}\Pi_{3}(w)\right) \\
&=c_{k}^{-1}\pi_{ra}\left(i\Pi_{2}(w)+j'\sigma_{2}\Pi_{3}(w)\right)
\end{align*}

Since this is true for all projections $\pi_{ra}$, then \\
\begin{equation} \label{5.7}
0=i \Pi_{2}(w) + j' \sigma_{2}\Pi_{3}(w)=\Pi_{2}(w)+\sigma_{2}\Pi_{3}(w)
\end{equation}

By \ref{4.2}, $\Pi_{2}(w) \in \operatorname{im}(\gamma)$. Applying $\pi_{2}$ to \ref{5.7}, where $\pi_{2}: \operatorname{ker}(\alpha) \rightarrow \frac{ \operatorname{ker}(\alpha)}{\operatorname{im}(\gamma)}$ is the projection map, yields $\pi_{2} \sigma_{2} \Pi_{3}(w)=0$. Since $\pi_{2}\sigma_{2}=id_{ker(\alpha)/im(\gamma)}$, then $\Pi_{3}(w)=0$. Substituting $\Pi_{3}(w)=0$ into \ref{5.7} gives $\Pi_{2}(w)=0$. Consequently:

\begin{center}
$\operatorname{ker}(\overline{\beta})=\frac{\operatorname{ker}(\gamma)}{\operatorname{im}(\beta)} \oplus \{0\} \oplus \{0\} \oplus V_{k}$ 
\end{center}

and the proof of \ref{5.6} is now complete. We also have
\begin{equation} \label{5.8}
\operatorname{im}(\overline{\beta})=\operatorname{ker}(\alpha)
\end{equation}

We now prove the following formula
\begin{equation} \label{5.9}
\operatorname{\overline{\gamma}}=c_{k}\beta \alpha
\end{equation}

First, we compute $Y_{[a^{\ast}w(^{\ast}b)]}(\mu_{k}P)$ where $a \in _{k}T, b \in T_{k}$, $w \in L(k)$ and $\Delta_{k}$ is the following potential 
\begin{center}
$\Delta_{k}=\displaystyle \sum_{sa_{1} \in _{k}\hat{T}, b_{1}t \in \tilde{T}_{k}} [b_{1}tsa_{1}]((sa_{1})^{\ast})(^{\ast}(b_{1}t))$
\end{center}

Note that $\Delta_{k}$ is cyclically equivalent to the following potential 
\begin{center}
$\Delta_{k}'=\displaystyle \sum_{sa_{1},b_{1}t} a_{1}^{\ast}s^{-1}t^{-1}(^{\ast}b_{1})[b_{1}tsa_{1}]$.
\end{center}

Therefore
\begin{align*}
\Delta_{k}' &= \displaystyle \sum_{sa_{1},b_{1}t} \displaystyle \sum_{v,v_{1} \in L(k)} (v_{1}^{-1})^{\ast}(s^{-1}t^{-1})v^{\ast}(ts)a_{1}^{\ast}v_{1}^{-1}(^{\ast}b_{1})[b_{1}va_{1}] \\
&=\displaystyle \sum_{sa_{1},b_{1}} \displaystyle \sum_{v,v_{1},t \in L(k)}  (v_{1}^{-1})^{\ast}(s^{-1}t^{-1})v^{\ast}(ts)a_{1}^{\ast}v_{1}^{-1}(^{\ast}b_{1})[b_{1}va_{1}] 
\end{align*}

By \cite[Proposition 7.5]{2} we have: 
\begin{center}
$\displaystyle \sum_{t \in L(k)} v^{\ast}(ts)(v_{1}^{-1})^{\ast}(s^{-1}t^{-1})=\delta_{v,v_{1}}$
\end{center}

thus
\begin{align*}
\Delta_{k}' &= \displaystyle \sum_{sa_{1},b_{1}} \displaystyle \sum_{v \in L(k)} a_{1}^{\ast} v^{-1} (^{\ast}b_{1}) [b_{1}va_{1}] \\
&= \displaystyle \sum_{sa_{1},b_{1}} \displaystyle \sum_{v,r \in L(k)} r^{\ast}(v^{-1}) a_{1}^{\ast} r (^{\ast} b_{1}) [b_{1}va_{1}]
\end{align*}

Therefore
\begin{center}
$\rho \left(\Delta_{k}'\right)= \displaystyle \sum_{sa_{1},b_{1}} \displaystyle \sum_{v,r \in L(k)} r^{\ast}(v^{-1})[a_{1}^{\ast}r(^{\ast}b_{1})][b_{1}va_{1}]$
\end{center}

\vspace{0.1in}

Let $a \in _{k}T$, $w \in L(k)$ and $b \in T_{k}$ be fixed. Then: \\

\begin{center}
$X_{[a^{\ast}w(^{\ast}b)]}\left(\rho(\Delta_{k}')\right)=\displaystyle \sum_{s,v \in L(k)} w^{\ast}(v^{-1})[bva]$
\end{center}

Note that
\begin{align*}
Y_{[a^{\ast}w(^{\ast}b)]}(\mu_{k}P)&=\rho^{-1}\left(X_{[a^{\ast}w(^{\ast}b)]}(\rho(\mu_{k}P))\right) \\
&=\rho^{-1}\left(X_{[a^{\ast}w(^{\ast}b)]}(\rho(\Delta_{k}'))\right) \\
&=\rho^{-1}\left(X_{[a^{\ast}w(^{\ast}b)]}(\rho(\Delta_{k}'))\right)
\end{align*}

Therefore
\begin{center}
$Y_{[a^{\ast}w(^{\ast}b)]}\left(\mu_{k}P\right)=\displaystyle \sum_{s,v \in L(k)} w^{\ast}(v^{-1})[bva]=c_{k}\displaystyle \sum_{v \in L(k)} w^{\ast}(v^{-1})[bva]$
\end{center}

Consequently
\begin{align*}
\pi_{bt}\overline{\gamma} \xi_{sa}(n) &= \displaystyle \sum_{w \in L(k)} (t^{-1})^{\ast}(sw)Y_{[a^{\ast}w(^{\ast}b)]}\left(\mu_{k}P\right)n \\
&=c_{k} \displaystyle \sum_{v,w \in L(k)} (t^{-1})^{\ast}(sw) w^{\ast}(v^{-1})[bva]n \\
&=c_{k} \displaystyle \sum_{v \in L(k)}(t^{-1})^{\ast}\left(s\displaystyle \sum_{w \in L(k)} w^{\ast}(v^{-1})w\right)[bva]n \\
&=c_{k}\displaystyle \sum_{v \in L(k)} (t^{-1})^{\ast}(sv^{-1})[bva]n
\end{align*}

By \ref{2.3}, $(t^{-1})^{\ast}(sv^{-1})=v^{\ast}(ts)$. Then

\begin{align*}
\pi_{bt}\overline{\gamma} \xi_{sa}(n) &= c_{k} \displaystyle \sum_{v \in L(k)} v^{\ast} (ts) [bva]n \\
&= c_{k} \left[b \displaystyle \sum_{v \in L(k)} v^{\ast}(ts)v a \right]n \\
&=c_{k} [btsa]n \\
&=c_{k} btsan \\
&=c_{k}\pi_{bt} \beta \alpha \xi_{sa}(n)
\end{align*}

As a direct consequence of \ref{5.4}, \ref{5.5}, \ref{5.6}, \ref{5.8} and \ref{5.9} we conclude that
\begin{equation} \label{5.10}
\begin{aligned}
& \operatorname{ker}(\overline{\alpha})=\operatorname{im}(\beta), \operatorname{im}(\overline{\alpha})=\frac{\operatorname{ker}(\gamma)}{\operatorname{im}(\beta)} \oplus \operatorname{im}(\gamma) \oplus \{0\} \oplus \{0\}, \\
& \operatorname{ker}(\overline{\beta})=\frac{ \operatorname{ker}(\gamma)}{\operatorname{im}(\beta)} \oplus \{0\} \oplus \{0\} \oplus V_{k},    \operatorname{im}(\overline{\beta})=\operatorname{ker}(\alpha), \\
& \operatorname{ker}(\overline{\gamma})=\operatorname{ker}(\beta \alpha),  \operatorname{im}(\overline{\gamma})=\operatorname{im}(\beta \alpha).
\end{aligned}
\end{equation}

Therefore
\begin{center}
$\overline{ \overline{V}}_{k} = \frac{ \operatorname{ker}(\overline{\beta})}{\operatorname{ker}(\overline{\beta}) \cap \operatorname{im}(\overline{\alpha})}=V_{k}$
\end{center}

and hence $\overline{\overline{V}}=V$. \vspace{0.2in}

On the other hand

\begin{center}
$\overline{\overline{N}}_{k}= \frac{ \operatorname{ker}(\overline{\gamma})}{\operatorname{im}(\overline{\beta})} \oplus \operatorname{im}(\overline{\gamma}) \oplus \frac{ \operatorname{ker}(\overline{\alpha})}{\operatorname{im}(\overline{\gamma})} \oplus \overline{V}_{k}$
\end{center}

and by \ref{5.10}

\begin{center}
$\overline{\overline{N}}_{k} = \frac{ \operatorname{ker}(\beta \alpha)}{\operatorname{ker}(\alpha)} \oplus \operatorname{im}(\beta \alpha) \oplus \frac{\operatorname{im}(\beta)}{\operatorname{im}(\beta \alpha)} \oplus \frac{ \operatorname{ker}(\beta)}{\operatorname{ker}(\beta) \cap  \operatorname{im}(\alpha)}$
\end{center}

\vspace{0.1in}

We now make the following observations:
\begin{enumerate}[(a)]
\item the map $\alpha$ induces an isomorphism $\operatorname{ker}(\beta \alpha)/ \operatorname{ker}(\alpha) \stackrel{\theta_{1}}{\longrightarrow}\operatorname{ker}(\beta) \cap \operatorname{im}(\alpha)$;
\item the map $\beta$ induces an isomorphism $\operatorname{im}(\alpha)/( \operatorname{ker}(\beta) \cap \operatorname{im}(\alpha)) \stackrel{\theta_{2}}{\longrightarrow} \operatorname{im}(\beta \alpha)$;
\item the map $\beta$ induces an isomorphism $N_{k}/(\operatorname{ker}(\beta) + \operatorname{im}(\alpha)) \stackrel{\theta_{3}}{\longrightarrow} \operatorname{im}(\beta)/ \operatorname{im}(\beta \alpha)$.
\item there exists an isomorphism $\operatorname{ker}(\beta)/( \operatorname{ker}(\beta) \cap \operatorname{im}(\alpha)) \stackrel{\theta_{4}}{\longrightarrow} (\operatorname{ker}(\beta) + \operatorname{im}(\alpha))/ \operatorname{im}(\alpha)$.
\end{enumerate}

Using these isomorphisms, we can identify $\overline{\overline{N}}_{k}$ with the space \\

\begin{center}
$\overline{\overline{N}}_{k} \stackrel{f}{\longrightarrow} (\operatorname{ker}(\beta) \cap \operatorname{im}(\alpha)) \oplus \frac{\operatorname{im}(\alpha)}{\operatorname{ker}(\beta) \cap \operatorname{im}(\alpha)} \oplus \frac{N_{k}}{\operatorname{ker}(\beta) + \operatorname{im}(\alpha)} \oplus \frac{\operatorname{ker}(\beta) + \operatorname{im}(\alpha)}{\operatorname{im}(\alpha)}$
\end{center}

via the map
\[f=
\begin{bmatrix}
\theta_{1} & 0 & 0 & 0 \\
0 & \theta_{2}^{-1} & 0 & 0 \\
0 & 0 & \theta_{3}^{-1} & 0 \\
0 & 0 & 0 & \theta_{4}
\end{bmatrix}
\]

We have canonical projections
\begin{align*}
\overline{\pi}_{1}&: \operatorname{ker}(\beta \alpha) \rightarrow \frac{ \operatorname{ker}(\beta \alpha)}{ \operatorname{ker}(\alpha)} \\
\overline{\pi}_{2}&: \operatorname{im}(\beta) \rightarrow \frac{ \operatorname{im}(\beta)}{ \operatorname{im}(\beta \alpha)}\\
\overline{\pi}_{3}&: \operatorname{im}(\alpha) \rightarrow \frac{ \operatorname{im}(\alpha)}{\operatorname{ker}(\beta) \cap \operatorname{im}
(\alpha)} \\
\overline{\pi}_{4}&: N_{k} \rightarrow \frac{N_{k}}{\operatorname{ker}(\beta) + \operatorname{im}(\alpha)} \\
\overline{\pi}_{5}&: \operatorname{ker}(\beta)+\operatorname{im}(\alpha) \rightarrow \frac{\operatorname{ker}(\beta) + \operatorname{im}(\alpha)}{\operatorname{im}(\alpha)}
\end{align*}

and inclusion maps
\begin{align*}
&\overline{i}_{1}: \operatorname{im}(\overline{\gamma}) \rightarrow N_{out} \\
&\overline{i}_{2}: \operatorname{ker}(\beta \alpha) \rightarrow N_{in} \\
&\overline{i}_{3}: \operatorname{im}(\beta \alpha) \rightarrow N_{k}\\
&\overline{j}: \operatorname{ker}(\overline{\alpha}) \rightarrow N_{out} \\
\end{align*}

We now choose splitting data as follows: \\

\begin{enumerate}[(a)]
\item Let $\overline{p}: N_{in} \rightarrow \operatorname{ker}(\beta \alpha)$ be a map of left $D_{k}$-modules such that $\overline{p}\overline{i}_{2} = id_{ \operatorname{ker}(\beta \alpha)}$. \\
\item Let $\overline{\sigma}: \operatorname{im}(\beta)/\operatorname{im}(\beta \alpha) \rightarrow \operatorname{im}(\beta)$ be a map of left $D_{k}$-modules such that $\overline{\pi}_{2} \overline{\sigma}=id_{\operatorname{im}(\beta)/\operatorname{im}(\beta \alpha)}$. 
\end{enumerate}

\vspace{0.1in}

For each $a \in _{k}T$, there exists an $F$-linear map \\

\begin{center}
$\overline{\overline{N}}(a): N_{\tau(a)} \rightarrow (\operatorname{ker}(\beta) \cap \operatorname{im}(\alpha)) \oplus \frac{\operatorname{im}(\alpha)}{\operatorname{ker}(\beta) \cap \operatorname{im}(\alpha)} \oplus \frac{N_{k}}{\operatorname{ker}(\beta) + \operatorname{im}(\alpha)} \oplus \frac{\operatorname{ker}(\beta) + \operatorname{im}(\alpha)}{\operatorname{im}(\alpha)}$
\end{center}

such that
\begin{align*}
&\overline{\Pi}_{1} \overline{\overline{N}}(a)=-\theta_{1}\overline{\pi}_{1} \overline{p} \xi_{e_{k}a} \\
&\overline{\Pi}_{2} \overline{\overline{N}}(a)=-\theta_{2}^{-1} \overline{\gamma}' \xi_{e_{k}}a\\
&\overline{\Pi}_{3} \overline{\overline{N}}(a) = \overline{\Pi}_{4} \overline{\overline{N}}(a) = 0
\end{align*}

Let $n \in N_{\tau(a)}$. Then
\begin{align*}
-\theta_{1} \overline{\pi}_{1} \overline{p} \xi_{e_{k}a}(n) &= -\theta_{1}\left(\overline{p}\xi_{e_{k}a}(n)+\operatorname{ker}(\alpha)\right)=-\alpha \overline{p} \xi_{e_{k}a}(n) \\
-\theta_{2}^{-1}(\overline{\gamma}'(\xi_{e_{k}a}(n)))&=-\theta_{2}^{-1}(c_{k}\beta \alpha \xi_{e_{k}a}(n))=-c_{k}\overline{\pi}_{3} \alpha \xi_{e_{k}a}(n)
\end{align*}

\vspace{0.2in}

Thus the map $\overline{\overline{N}}(a)$ takes the following form:
\begin{equation} \label{5.11}
\begin{split}
\overline{\Pi}_{1} \overline{\overline{N}}(a)&=-\alpha \overline{p} \xi_{e_{k}a} \\
\overline{\Pi}_{2} \overline{\overline{N}}(a)&= -c_{k}\overline{\pi}_{3} \alpha \xi_{e_{k}a} \\
\overline{\Pi}_{3} \overline{\overline{N}}(a)&=\overline{\Pi}_{4} \overline{\overline{N}}(a)=0
\end{split}
\end{equation}

\vspace{0.2in}

Similarly, for each $b \in T_{k}$, there exists an $F$-linear map
\begin{center}
$\overline{\overline{N}}(b): (\operatorname{ker}(\beta) \cap \operatorname{im}(\alpha)) \oplus \frac{\operatorname{im}(\alpha)}{\operatorname{ker}(\beta) \cap \operatorname{im}(\alpha)} \oplus \frac{N_{k}}{\operatorname{ker}(\beta) + \operatorname{im}(\alpha)} \oplus \frac{\operatorname{ker}(\beta) + \operatorname{im}(\alpha)}{\operatorname{im}(\alpha)}  \longrightarrow N_{\sigma(b)}$
\end{center}

\vspace{0.2in}

given by
\begin{align*}
\overline{\overline{N}}(b)\overline{J}_{1}&=\overline{\overline{N}}(b)\overline{J}_{4}=0 \\
\overline{\overline{N}}(b)\overline{J}_{2}&=c_{k}^{-1} \pi_{be_{k}}\overline{i}_{3} \theta_{2} \\
\overline{\overline{N}}(b)\overline{J}_{3}&=c_{k}^{-1} \pi_{be_{k}} \overline{j} \overline{\sigma} \theta_{3}
\end{align*}

To complete the proof of Theorem \ref{theo2}, it remains to construct an isomorphism of $F$-vector spaces $\psi: \overline{\overline{N}}_{k} \rightarrow N_{k}$ such that
\begin{align*}
\psi \overline{\overline{N}}(a)&=\alpha \xi_{e_{k}a} \\
\pi_{be_{k}}\beta \psi &= \overline{\overline{N}}(b)
\end{align*}

for every $a \in _{k}T$ and $b \in T_{k}$. Choose some sections
\begin{align*}
\sigma_{1}&: \operatorname{im} \alpha/( \operatorname{ker}\beta \cap \operatorname{im} \alpha) \rightarrow \operatorname{im}\alpha \\
\sigma_{2}&: (\operatorname{ker} \beta + \operatorname{im} \alpha)/ \operatorname{im}(\alpha) \rightarrow \operatorname{ker} \beta + \operatorname{im} \alpha \\
\sigma_{3}&: N_{k}/(\operatorname{ker} \beta + \operatorname{im} \alpha) \rightarrow N_{k}
\end{align*}

so that they satisfy
\begin{center}
$\operatorname{im}(\sigma_{1})=\alpha(\operatorname{ker} \overline{p})$, $\operatorname{im}(\sigma_{2}) \subseteq \operatorname{ker}(\beta)$, $\operatorname{im}(\beta \sigma_{3})=\operatorname{im}(\overline{\sigma})$
\end{center}

\vspace{0.1in} 

Define $\psi: \overline{\overline{N}}_{k} \rightarrow N_{k}$ as

\[\psi=
\begin{bmatrix}
-i_{1}  & -c_{k}^{-1}i_{2}\sigma_{1} & -c_{k}^{-1}\sigma_{3} & -i_{3}\sigma_{2}
\end{bmatrix}
\]
where
\begin{align*}
&i_{1}: \operatorname{ker}(\beta) \cap \operatorname{im}(\alpha) \rightarrow N_{k} \\
&i_{2}: \operatorname{im}(\alpha) \rightarrow N_{k} \\
&i_{3}: \operatorname{ker}(\beta)+\operatorname{im}(\alpha) \rightarrow N_{k}
\end{align*}

are the inclusion maps. We now show that

\begin{equation} \label{5.12}
\psi \overline{\overline{N}}(a)=\alpha \xi_{e_{k}a}
\end{equation}

Since $\overline{p}: N_{in} \rightarrow \operatorname{ker}(\beta \alpha)$ is a retraction, then $N_{in}=\operatorname{ker}(\overline{p}) \oplus \operatorname{ker}(\beta \alpha).$ On the other hand, since $\operatorname{ker}(\alpha) \subseteq \operatorname{ker}(\beta \alpha)$ then there exists a submodule $L \subseteq \operatorname{ker}(\beta \alpha)$ such that $\operatorname{ker}(\alpha) \oplus L = \operatorname{ker}(\beta \alpha)$. Therefore $N_{in}=\operatorname{ker}(\overline{p}) \oplus \operatorname{ker}(\alpha) \oplus L$.

Let $n \in N_{\tau(a)}$, then $\xi_{e_{k}a}(n) \in N_{in}$. Thus, $\xi_{e_{k}a}(n)=n_{1}+n_{2}+n_{3}$ for some uniquely determined $n_{1} \operatorname{ker}(\overline{p})$, $n_{2} \in \operatorname{ker}(\alpha)$ and $n_{3} \in L$. Therefore:

\begin{center}
$\alpha \xi_{e_{k}a}(n)=\alpha(n_{1}+n_{2}+n_{3})=\alpha(n_{1}+n_{3})$
\end{center}

On the other hand:
\begin{align*}
\left(\psi \overline{\overline{N}}(a)\right)(n)&= \psi \left( - \alpha \overline{p} \xi_{e_{k}a}(n), -c_{k}\overline{\pi}_{3} \alpha \xi_{e_{k}a}(n),0,0\right) \\
&=\alpha \overline{p}(n_{1}+n_{2}+n_{3})+c_{k}^{-1}c_{k}\sigma_{1} \overline{\pi}_{3} \alpha(n_{1}+n_{3}) \\
&=\alpha \overline{p}(n_{2}+n_{3})+\alpha(n_{1}) \\
&=\alpha(n_{2}+n_{3})+\alpha(n_{1}) \\
&=\alpha(n_{1}+n_{3}) \\
&=\alpha \xi_{e_{k}a}(n)
\end{align*}

and the proof of \ref{5.12} is now complete. \\

We now verify that \\
\begin{equation} \label{5.13}
\pi_{be_{k}}\beta \psi = \overline{\overline{N}}(b)
\end{equation}

\vspace{0.1in}

Let $n_{1} \in \operatorname{ker}(\beta) \cap \operatorname{im}(\alpha), n_{2} \in \operatorname{im}(\alpha), n_{3} \in N_{k}, n_{4} \in \operatorname{ker}(\beta)+\operatorname{im}(\alpha)$. Then

\begin{align*}
\overline{\overline{N}}(b)\left(n_{1},\overline{\pi}_{3}(n_{2}), \overline{\pi}_{4}(n_{3}), \overline{\pi}_{5}(n_{4})\right)&=c_{k}^{-1}\pi_{be_{k}}\overline{i}_{3}\theta_{2}\overline{\pi}_{3}(n_{2})+c_{k}^{-1}\pi_{be_{k}}\overline{j}\overline{\sigma}\theta_{3}\overline{\pi}_{4}(n_{3}) \\
&=c_{k}^{-1}b \cdot n_{2} + c_{k}^{-1}b \cdot n_{3} \\
&=c_{k}^{-1}\psi_{0}^{-1}(b)n_{2}+c_{k}^{-1}\psi_{0}^{-1}(b)n_{3} \\
&=-c_{k}^{-1}bn_{2}-c_{k}^{-1}bn_{3}
\end{align*}

On the other hand \\
\begin{align*}
\pi_{be_{k}} \beta \psi(n) &= \pi_{be_{k}} \beta \left(-n_{1}-c_{k}^{-1}i_{2}\sigma_{1} \overline{\pi}_{3}(n_{2})-c_{k}^{-1}\sigma_{3} \overline{\pi}_{4}(n_{3})-i_{3}\sigma_{2}\overline{\pi}_{5}(n_{4})\right)
\end{align*}

\vspace{0.1in}

Since $n_{1} \in \operatorname{ker}(\beta)$ and $\operatorname{im}(\sigma_{2}) \subseteq \operatorname{ker}(\beta)$ we see that the above expression is equal to \\
\begin{align*}
\pi_{be_{k}} \beta(-c_{k}^{-1}i_{2}\sigma_{1}\overline{\pi}_{3}(n_{2})-c_{k}^{-1}\sigma_{3}\overline{\pi}_{4}(n_{3})) &=\pi_{be_{k}}\beta(-c_{k}^{-1}n_{2}-c_{k}^{-1}n_{3}) \\
&=-c_{k}^{-1}bn_{2}-c_{k}^{-1}bn_{3}
\end{align*}

and \ref{5.13} follows. Finally, we extend $\psi$ to an isomorphism of $F$-vector spaces $\psi': \overline{\overline{N}} \rightarrow N$ defined as the identity map on $\displaystyle \bigoplus_{i \neq k} \overline{\overline{N}}_{i}$.

\end{proof}

\section{Nearly Morita equivalence} \label{sec6}

Throughout this section, $\mathcal{F}_{S}(M)/R(P)$-$\operatorname{mod}_{k}$ denotes the category of finite dimensional left $\mathcal{F}_{S}(M)/R(P)$-modules modulo the ideal of morphisms which factor through direct sums of the left $\mathcal{F}_{S}(M)$-simple module at $k$. \\

In this section we prove that the categories $\mathcal{F}_{S}(M)/R(P)-\operatorname{mod}_{k}$ and  $\mathcal{F}_{S}(\overline{\mu}_{k}M)/R(\overline{\mu}_{k}P)-\operatorname{mod}_{k}$ are equivalent, where $(\mathcal{F}_{S}(\overline{\mu}_{k}M),\mathcal{F}_{S}(\overline{\mu}_{k}P))$ denotes the mutation at $k$ in the sense of \cite[p.56]{2}. \\

For each finite dimensional left $\mathcal{F}_{S}(M)/R(P)$-module, we choose splitting data $(p_{N},(\sigma_{2})_{N})$. Let $u: N \rightarrow N^{1}$ be a morphism of left $\mathcal{F}_{S}(M)/R(P)$-modules. Then $u$ induces $D_{k}$-linear maps:

\begin{align*}
u_{out}&:N_{out} \rightarrow N_{out}^{1} \\
u_{in}&: N_{in} \rightarrow N_{in}^{1}
\end{align*}

such that for each $a,a_{1} \in _{k}T, r,r_{1} \in L(k)$

\begin{center}
$\pi_{r_{1}a_{1}}^{1}u_{in}\xi_{ra}=\delta_{r_{1}a_{1},ra}u$
\end{center}

and for each $b,b_{1} \in T_{k}, r,r_{1} \in L(k)$

\begin{center}
$\pi_{b_{1}r_{1}}^{1}u_{out}\xi_{br}=\delta_{b_{1}r_{1},br}u$
\end{center}

We also have $D_{k}$-linear maps
\begin{align*}
& \alpha: N_{in} \rightarrow N_{k}; \alpha_{1}: N_{in}^{1} \rightarrow N_{k}^{1} \\
& \beta: N_{k} \rightarrow N_{out}; \beta_{1}: N_{k}^{1} \rightarrow N_{out}^{1} \\
& \gamma: N_{out} \rightarrow N_{in}; \gamma_{1}: N_{out}^{1} \rightarrow N_{in}^{1}
\end{align*}

Then we have the following equalities
\begin{align*}
& u_{k}\alpha=\alpha_{1}u_{in}; u_{out}\beta=\beta_{1}u_{k} \\
& u_{in}\gamma=\gamma_{1}u_{out}
\end{align*}

The map $u_{out}$ induces $D_{k}$-linear maps
\begin{align*}
&u_{1}: \operatorname{ker}(\gamma) \rightarrow \operatorname{ker}(\gamma_{1}) \\
&u_{4}: \operatorname{im}(\beta) \rightarrow \operatorname{im}(\beta_{1})
\end{align*}

The map $u_{in}$ induces $D_{k}$-linear maps
\begin{align*}
&u_{2}: \operatorname{im}(\gamma) \rightarrow \operatorname{im}(\gamma_{1}) \\
&u_{3}: \operatorname{ker}(\alpha) \rightarrow \operatorname{ker}(\alpha_{1})
\end{align*}

The map $u_{1}$ induces a $D_{k}$-linear map
\begin{center}
$\underline{u}_{1}: \operatorname{ker}(\gamma)/\operatorname{im}(\beta) \rightarrow \operatorname{ker}(\gamma_{1})/\operatorname{im}(\beta_{1})$
\end{center}

\vspace{0.1in}

likewise, the map $u_{3}$ induces a $D_{k}$-linear map
\begin{center}
$\underline{u}_{3}: \operatorname{ker}(\alpha)/\operatorname{im}(\gamma) \rightarrow \operatorname{ker}(\alpha_{1})/\operatorname{im}(\gamma_{1})$
\end{center}

so we have a $D_{k}$-linear map
\begin{center}
$h=\underline{u}_{1} \oplus u_{2} \oplus \underline{u}_{3}: \overline{N}_{k} \rightarrow \overline{N^{1}}_{k}$
\end{center}

Then we define a linear map of left $S$-modules:

\begin{center}
$\overline{u}: \overline{N} \rightarrow \overline{N^{1}}$
\end{center}

as $\overline{u}_{i}=u_{i}$ for every $i \neq k$ and $\overline{u}_{k}=h$.

\begin{definition} Following \cite{7} we say that $u \in \operatorname{Hom}_{S}(_{S}L_{1},_{S}L_{2})$ is confined to $k$ if $u_{i}: e_{i}L_{1} \rightarrow e_{i}L_{2}$ is the zero map for all $i \neq k$. Note that a map of left $\mathcal{F}_{S}(M)$-modules $u: L_{1} \rightarrow L_{2}$ factors through a direct sum of the left $\mathcal{F}_{S}(M)$-simple module at $k$ if and only if $u$ is confined to $k$.
\end{definition}

\begin{lemma} Let $u: N \rightarrow N^{1}$ be a map of finite dimensional left $\mathcal{F}_{S}(M)/R(P)$-modules. Then there exists a map of left $S$-modules $\overline{\rho(u)}: \overline{N} \rightarrow \overline{N^{1}}$, confined to $k$, and such that $\overline{u}+\overline{\rho(u)}$ is a map of left $\mathcal{F}_{S}(\mu_{k}M)$-modules.
\end{lemma}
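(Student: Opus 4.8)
The plan is to start from the ``naive'' map $\overline{u}\colon\overline{N}\to\overline{N^{1}}$ defined just before the statement, observe that it already intertwines the action of all but two families of generators of $\mathcal{F}_{S}(\mu_{k}M)$, and then absorb the remaining discrepancy into a correction supported at the vertex $k$. Concretely, I would seek $\overline{\rho(u)}$ with $\overline{\rho(u)}_{i}=0$ for $i\neq k$ (so it is confined to $k$) and $\overline{\rho(u)}_{k}=g$ for an $F$-linear map $g\colon\overline{N}_{k}\to\overline{N^{1}}_{k}$ still to be built; then $\overline{u}+\overline{\rho(u)}$ is a map of $\mathcal{F}_{S}(\mu_{k}M)$-modules exactly when
\[
\overline{N^{1}}(a^{\ast})(\overline{u}_{k}+g)=u\,\overline{N}(a^{\ast})\quad(a\in{}_{k}T),\qquad(\overline{u}_{k}+g)\,\overline{N}({}^{\ast}b)=\overline{N^{1}}({}^{\ast}b)\,u\quad(b\in T_{k}).
\]
The first step is to check that no other generator of $(\widetilde{M})_{0}$ imposes a condition: for $c\in\bar{e_{k}}M_{0}\bar{e_{k}}$ the action on $\overline{N}$ and $\overline{N^{1}}$ is $c_{N}$ and $c_{N^{1}}$, and for a composite generator $\rho(bra)$ it is $(bra)_{N}$, $(bra)_{N^{1}}$ by Lemma~\ref{lem3}; since these arrows run between vertices $\neq k$ and $\overline{u}_{i}=u_{i}$ there, intertwining is automatic because $u$ is already a module map, and — $\widetilde{M}$ having no cyclic part at $k$ — these together with $a^{\ast},{}^{\ast}b$ exhaust the $Z$-free generating set. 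So only the two displayed identities must be arranged.

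The second step is to locate the two \emph{defects}. Writing $p,\sigma_{2}$ (resp. $p^{1},\sigma_{2}^{1}$) for the splitting data of $N$ (resp. $N^{1}$), and using only the relations $\pi^{1}_{r_{1}a_{1}}u_{in}\xi_{ra}=\delta_{r_{1}a_{1},ra}u$, $\pi^{1}_{b_{1}r_{1}}u_{out}\xi_{br}=\delta_{b_{1}r_{1},br}u$, the identities \ref{3.11} and \ref{3.15}, and $u_{in}\gamma=\gamma_{1}u_{out}$, one extracts $\pi^{1}_{e_{k}a}u_{in}=u\,\pi_{e_{k}a}$ and $u_{out}\xi_{br}=\xi^{1}_{br}u$. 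From these (and $u_{in}i=i^{1}u_{2}$, $u_{in}j'=j'^{1}u_{3}$, $u_{out}j=j^{1}u_{1}$, $\underline{u}_{3}\pi_{2}=\pi_{2}^{1}u_{3}$ etc.) it follows that the defect $u\,\overline{N}(a^{\ast})-\overline{N^{1}}(a^{\ast})\overline{u}_{k}$ vanishes on the summands $\ker(\gamma)/\operatorname{im}(\beta)$, $\operatorname{im}(\gamma)$, $V_{k}$ of $\overline{N}_{k}$ and equals $c_{k}^{-1}\pi^{1}_{e_{k}a}i^{1}\delta$ on $\ker(\alpha)/\operatorname{im}(\gamma)$, where $\delta\colon\ker(\alpha)/\operatorname{im}(\gamma)\to\operatorname{im}(\gamma_{1})$ is the corestriction of $u_{3}\sigma_{2}-\sigma_{2}^{1}\underline{u}_{3}$ (this really lands in $\operatorname{im}(\gamma_{1})\subseteq\ker(\alpha_{1})$, using Lemma~\ref{lem1}, since applying $\pi_{2}^{1}$ gives $\underline{u}_{3}-\underline{u}_{3}=0$). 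Dually, the defect $\overline{N^{1}}({}^{\ast}b)u-\overline{u}_{k}\overline{N}({}^{\ast}b)$ vanishes on the $\operatorname{im}(\gamma_{1})$-component of $\overline{N^{1}}_{k}$ and equals $\pi_{1}^{1}\kappa\,\gamma'\xi_{be_{k}}$ on the $\ker(\gamma_{1})/\operatorname{im}(\beta_{1})$-component, where $\kappa\colon\operatorname{im}(\gamma)\to\ker(\gamma_{1})$ is the unique map with $\kappa\gamma'=u_{1}p-p^{1}u_{out}$; such $\kappa$ exists because $u_{1}p-p^{1}u_{out}$ kills $\ker(\gamma)$ (by $pj=\mathrm{id}$, $p^{1}j^{1}=\mathrm{id}$, $u_{out}j=j^{1}u_{1}$) and $\gamma'$ is the quotient map onto $\operatorname{im}(\gamma)$.

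The final step is to set $g$ equal to $\delta$ from the $\ker(\alpha)/\operatorname{im}(\gamma)$-summand into the $\operatorname{im}(\gamma_{1})$-summand, equal to $-\pi_{1}^{1}\kappa$ from the $\operatorname{im}(\gamma)$-summand into the $\ker(\gamma_{1})/\operatorname{im}(\beta_{1})$-summand, and zero on all remaining components, and then to verify the two identities. The two pieces of $g$ do not interfere: $\overline{N^{1}}(a^{\ast})$ annihilates the $\ker(\gamma_{1})/\operatorname{im}(\beta_{1})$-summand, so the $\kappa$-piece is invisible to the first identity, while $\operatorname{im}\overline{N}({}^{\ast}b)\subseteq\ker(\gamma)/\operatorname{im}(\beta)\oplus\operatorname{im}(\gamma)$, so the $\delta$-piece (supported on $\ker(\alpha)/\operatorname{im}(\gamma)$) contributes nothing to the second identity; matching the surviving terms with the defects computed above is then immediate. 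Taking $\overline{\rho(u)}_{k}=g$ and $\overline{\rho(u)}_{i}=0$ for $i\neq k$ finishes the argument. I expect the main obstacle to be the bookkeeping of the second step: keeping track, for each of $\overline{N}(a^{\ast})$, $\overline{N}({}^{\ast}b)$ and $\overline{u}_{k}$, of which of the four summands of $\overline{N}_{k}$ and $\overline{N^{1}}_{k}$ it hits, and checking that the splitting‑data‑dependent terms reorganize into the well‑defined maps $\delta$ and $\kappa$; once that is in place the verification is routine diagram chasing.
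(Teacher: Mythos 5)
Your proposal is correct and follows essentially the same route as the paper: your $\delta$ is the paper's $\rho_{1}=u_{3}\sigma_{2}-\sigma_{2,1}\underline{u}_{3}$, your $\kappa$ is $-\rho_{2}$ (the paper normalizes $\rho_{2}\gamma'=p_{1}u_{out}-u_{1}p$, so $-\pi_{1}^{1}\kappa=\pi_{1}^{1}\rho_{2}$), and your $g$ coincides with the paper's strictly upper-triangular matrix for $\overline{\rho(u)}_{k}$. The verification you outline, including the observation that the two correction entries do not interfere with each other's identity, matches the paper's computation.
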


\begin{proof} Let $(p=p_{N},\sigma_{2}=(\sigma_{2})_{N})$ and $(p_{1},\sigma_{2,1})$ be the splitting data for $N$ and $N^{1}$, respectively. Then we have the following commutative diagram with exact rows:

\begin{center}
 \begin{tabular}{c}
\xymatrix{
0 \ar[r] & \operatorname{im}(\gamma) \ar[r] \ar[d]^{u_{2}}& \operatorname{ker}(\alpha) \ar[r]^(.35){\pi_{2}} \ar[d]^{u_{3}} & \operatorname{ker}(\alpha)/ \operatorname{im}(\gamma) \ar[r] \ar[d]^{\underline{u}_{3}} & 0 \\
0 \ar[r] & \operatorname{im}(\gamma_{1}) \ar[r] & \operatorname{ker}(\alpha_{1}) \ar[r]^(.35){\pi^{1}_{2}} & \operatorname{ker}(\alpha_{1})/ \operatorname{im}(\gamma_{1}) \ar[r] & 0 
}
\end{tabular}
\end{center}

thus $\pi_{2}^{1}(u_{3}\sigma_{2}-\sigma_{2,1}\underline{u}_{3})=\underline{u}_{3}\pi_{2}\sigma_{2}-\underline{u}_{3}=\underline{u}_{3}-\underline{u}_{3}=0$. Hence there exists a linear map of left $S$-modules:

\begin{center}
$\rho_{1}=u_{3}\sigma_{2}-\sigma_{2,1}\underline{u}_{3}: \operatorname{ker}(\alpha)/\operatorname{im}(\gamma) \rightarrow \operatorname{im}(\gamma_{1})$.
\end{center}

Similarly, we have a commutative diagram with exact rows:
\begin{center}
 \begin{tabular}{c}
\xymatrix{
0 \ar[r] & \operatorname{ker}(\gamma) \ar[r]^{j} \ar[d]^{u_{1}}& N_{out} \ar[r]^{\gamma} \ar[d]^{u_{out}} & \operatorname{im}(\gamma) \ar[r] \ar[d]^{u_{2}} & 0 \\
0 \ar[r] & \operatorname{ker}(\gamma_{1}) \ar[r]^{j_{1}} & N_{out^{1}} \ar[r]^{\gamma_{1}} & \operatorname{im}(\gamma_{1}) \ar[r] & 0
}
\end{tabular}
\end{center}

hence $(u_{1}p-p_{1}u_{out})j=u_{1}pj-p_{1}u_{out}j=u_{1}-p_{1}j_{1}u_{1}=u_{1}-u_{1}=0$. \\ 

Therefore, there exists a linear map of left $S$-modules:
\begin{center}
$\rho_{2}: \operatorname{im}(\gamma) \rightarrow \operatorname{ker}(\gamma_{1})$
\end{center}

such that $\rho_{2}\gamma=p_{1}u_{out}-u_{1}p$. \\

Define $\overline{\rho(u)}: \overline{N} \rightarrow \overline{N^{1}}$ as follows: $\overline{\rho(u)}_{i}=0$ for all $i \neq k$ and $\overline{\rho(u)}_{k}: \overline{N}_{k} \rightarrow \overline{N^{1}}_{k}$ is the following linear map of left $S$-modules:
\[
\begin{bmatrix}
0 & \pi_{1}^{1}\rho_{2} & 0 \\
0 & 0 & \rho_{1} \\
0 & 0 & 0
\end{bmatrix}
\]

Let us show that the map $\overline{u}+\overline{\rho(u)}$ is in fact a map of left $\mathcal{F}_{S}(\mu_{k}M)$-modules. \\

Let $n \in N_{\sigma(b)}$. On one hand \\
\begin{align*}
\left(\overline{u}+\overline{\rho(u)}\right)\left(\overline{N}(^{\ast}b)(n)\right)&=\left(-\underline{u}_{1}\pi_{1}p\xi_{be_{k}}(n)-\pi_{1}^{1}\rho_{2}\gamma \xi_{be_{k}}(n),-u_{2}\gamma \xi_{be_{k}}(n),0\right) \\
&=(-\pi_{1}^{1}u_{1}p\xi_{be_{k}}(n)-\pi_{1}^{1}\rho_{2}\gamma \xi_{be_{k}}(n), -u_{2}\gamma \xi_{be_{k}}(n),0) \\
\end{align*}

On the other hand \\
\begin{align*}
\overline{N^{1}}(^{\ast}b)(u_{\sigma(b)}(n))&=(-\pi_{1}^{1}p_{1}\xi_{be_{k}}(u_{\sigma(b)}(n)),-\gamma_{1}\xi_{be_{k}}(u_{\sigma(b)}(n)),0) \\
&=(-\pi_{1}^{1}p_{1}u_{out}(\xi_{be_{k}}(n)),-\gamma_{1}u_{out}(\xi_{be_{k}}(n)),0) \\
&=(-\pi_{1}^{1}u_{1}p\xi_{be_{k}}(n)-\pi_{1}^{1}\rho_{2}\gamma\xi_{be_{k}}(n),-u_{2}\gamma\xi_{be_{k}}(n),0) 
\end{align*}

Therefore

\begin{center}
$\left(\overline{u}+\overline{\rho(u)}\right)\overline{N}(^{\ast}b)=\overline{N^{1}}(^{\ast}b)u_{\sigma(b)}$
\end{center}

Now for each $a \in T_{k}$, $x \in \operatorname{ker}(\gamma)/\operatorname{im}(\gamma)$, $y \in \operatorname{im}(\gamma)$ and $z \in \operatorname{ker}(\alpha)/\operatorname{im}(\gamma)$ we have \\
\begin{align*}
u_{\tau(a)}\overline{N}(a^{\ast})(x,y,z)&=c_{k}^{-1}u_{\tau(a)}(\pi_{e_{k}a}i(y)+\pi_{e_{k}a}j'\sigma_{2}(z)) \\
&=c_{k}^{-1}(\pi^{1}_{e_{k}a}i_{1}u_{2}(y)+\pi^{1}_{e_{k}a}j_{1}'u_{3}\sigma_{2}(z))
\end{align*}

On the other hand

\begin{align*}
\overline{N^{1}}(a^{\ast})\left(\overline{u}+\overline{\rho(u)}\right)(x,y,z)&=\overline{N^{1}}(a^{\ast})\left((\overline{u}_{1}(x),u_{2}(y),\overline{u}_{3}(z))+(\pi_{1}^{1}\rho_{2}(y),\rho_{1}(z),0)\right) \\
&=c_{k}^{-1}\left(\pi^{1}_{e_{k}a}i_{1}u_{2}(y)+\pi^{1}_{e_{k}a}j_{1}'\sigma_{2,1}\underline{u}_{3}(z)+\pi^{1}_{e_{k}a}i_{1}\rho_{1}(z)\right) \\
&=c_{k}^{-1}\left(\pi^{1}_{e_{k}a}i_{1}u_{2}(y)+\pi^{1}_{e_{k}a}j_{1}'(\sigma_{2,1}\underline{u}_{3}(z)+\rho_{1}(z))\right) \\
&=c_{k}^{-1}\left(\pi^{1}_{e_{k}a}i_{1}u_{2}(y)+\pi^{1}_{e_{k}a}j_{1}'u_{3}\sigma_{2}(z)\right)
\end{align*}

thus $u_{\tau(a)}\overline{N}(a^{\ast})=\overline{N^{1}}(a^{\ast})\left(\overline{u}+\overline{\rho(u)}\right)$, as was to be shown.
\end{proof}

\begin{prop} \label{prop8} There exists a faithful functor $\widetilde{\mu}_{k}: \mathcal{F}_{S}(M)/R(P)-\operatorname{mod}_{k} \rightarrow  \mathcal{F}_{S}(\mu_{k}M)/R(\mu_{k}P)-\operatorname{mod}_{k}$
\end{prop}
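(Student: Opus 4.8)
The plan is to define the functor $\widetilde{\mu}_{k}$ on objects by premutation and on morphisms by the correction furnished by the preceding Lemma, and then to check that this assignment descends to the quotient categories, is functorial, and is faithful.

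First I would fix, once and for all, splitting data $(p_{N},(\sigma_{2})_{N})$ for each finite dimensional left $\mathcal{F}_{S}(M)/R(P)$-module $N$, as announced at the beginning of the section, and put $\widetilde{\mu}_{k}(N):=\overline{N}$, the premutated module constructed in Section~\ref{sec4} applied to the decorated representation $(N,0)$. By the Proposition asserting that $\widetilde{\mu_{k}}(\mathcal{N})$ is a decorated representation of $(\mathcal{F}_{S}(\mu_{k}M),\mu_{k}P)$, the module $\overline{N}$ is annihilated by $R(\mu_{k}P)$, so it is an object of $\mathcal{F}_{S}(\mu_{k}M)/R(\mu_{k}P)-\operatorname{mod}_{k}$. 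For a morphism $u\colon N\to N^{1}$ of $\mathcal{F}_{S}(M)/R(P)$-modules I would invoke the preceding Lemma to choose $\overline{\rho(u)}$, confined to $k$, so that $\overline{u}+\overline{\rho(u)}\colon\overline{N}\to\overline{N^{1}}$ is a map of left $\mathcal{F}_{S}(\mu_{k}M)$-modules; since $\overline{N}$ and $\overline{N^{1}}$ are annihilated by $R(\mu_{k}P)$, it is in fact a morphism in $\mathcal{F}_{S}(\mu_{k}M)/R(\mu_{k}P)-\operatorname{mod}_{k}$, and I would set $\widetilde{\mu}_{k}([u]):=[\overline{u}+\overline{\rho(u)}]$.

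Next come well-definedness and functoriality. Any two valid choices of $\overline{\rho(u)}$ are both confined to $k$, so their difference is confined to $k$ and the class $[\overline{u}+\overline{\rho(u)}]$ does not depend on the choice; moreover if $u$ itself is confined to $k$ then $\overline{u}_{i}=u_{i}=0$ for $i\neq k$ while $\overline{\rho(u)}$ is confined to $k$, so $\overline{u}+\overline{\rho(u)}$ is confined to $k$ and represents $0$ --- hence $\widetilde{\mu}_{k}$ is well defined and $F$-linear on the Hom-sets of $\mathcal{F}_{S}(M)/R(P)-\operatorname{mod}_{k}$. The key functoriality point is that $u\mapsto\overline{u}$ is functorial on the nose: the relations $\pi^{1}_{r_{1}a_{1}}u_{in}\xi_{ra}=\delta_{r_{1}a_{1},ra}u$ and $\pi^{1}_{b_{1}r_{1}}u_{out}\xi_{br}=\delta_{b_{1}r_{1},br}u$ characterize $u_{in},u_{out}$ uniquely, and inserting the completeness identities \ref{3.11} and \ref{3.15} gives $(vu)_{in}=v_{in}u_{in}$ and $(vu)_{out}=v_{out}u_{out}$; the induced maps on the kernels and images of $\alpha,\beta,\gamma$ therefore compose, so $\overline{vu}=\overline{v}\,\overline{u}$ and $\overline{\operatorname{id}}=\operatorname{id}$. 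Consequently $\overline{vu}+\overline{\rho(vu)}$ and the composite $(\overline{v}+\overline{\rho(v)})(\overline{u}+\overline{\rho(u)})$ are both maps of left $\mathcal{F}_{S}(\mu_{k}M)$-modules $\overline{N}\to\overline{N^{2}}$ whose restrictions to every component $i\neq k$ coincide with $\overline{vu}_{i}$; their difference is thus confined to $k$, and $\widetilde{\mu}_{k}([v][u])=\widetilde{\mu}_{k}([v])\widetilde{\mu}_{k}([u])$ in $\mathcal{F}_{S}(\mu_{k}M)/R(\mu_{k}P)-\operatorname{mod}_{k}$.

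Finally, faithfulness. By $F$-linearity it suffices to show that $\widetilde{\mu}_{k}([u])=0$ forces $[u]=0$. If $\overline{u}+\overline{\rho(u)}$ is confined to $k$, then, $\overline{\rho(u)}$ being confined to $k$, so is $\overline{u}$; but $\overline{u}_{i}=u_{i}$ for $i\neq k$, whence $u_{i}=0$ for all $i\neq k$, i.e. $u$ is confined to $k$, i.e. $[u]=0$ in $\mathcal{F}_{S}(M)/R(P)-\operatorname{mod}_{k}$. I expect the main obstacle to be the functoriality step --- establishing $(vu)_{in}=v_{in}u_{in}$ and $(vu)_{out}=v_{out}u_{out}$ from \ref{3.11}, \ref{3.15} and propagating this through the subquotients that define $\overline{N}_{k}$ --- together with the careful application of the preceding Lemma ensuring that the correction terms $\overline{\rho(u)}$ never contribute outside the ideal of maps confined to $k$; the remaining verifications are formal.
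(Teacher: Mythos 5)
Your proposal is correct and follows essentially the same route as the paper: define the functor on objects by $N\mapsto\overline{N}$ and on morphisms by the class of $\overline{u}+\overline{\rho(u)}$, verify functoriality from $\overline{vu}=\overline{v}\,\overline{u}$ together with the fact that the correction terms are confined to $k$, and deduce faithfulness from the equivalence ``$\overline{u}+\overline{\rho(u)}$ confined to $k$ iff $\overline{u}$ confined to $k$ iff $u$ confined to $k$.'' Your write-up is in fact somewhat more explicit than the paper's on well-definedness and on the descent to the quotient category.
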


\begin{proof} First we define a functor $G: \mathcal{F}_{S}(M)/R(P)-\operatorname{mod} \rightarrow  \mathcal{F}_{S}(\mu_{k}M)/R(\mu_{k}P)-\operatorname{mod}_{k}$ as $G(N)=\overline{N}$ and given a map of left $\mathcal{F}_{S}(M)/R(P)$-modules $u: N \rightarrow N^{1}$, we define:

\begin{center}
$G(u)=\underline{\overline{u}+\overline{\rho(u)}}: \overline{N} \rightarrow \overline{N^{1}}$
\end{center}

On the other hand, if $v: N^{1} \rightarrow N^{2}$ is a map of left $\mathcal{F}_{S}(M)/R(P)$-modules then $G(vu)=\underline{\overline{vu}+\overline{\rho(vu)}}$. Since $\rho$ is concentrated in $k$ and $\overline{vu}=\overline{v} \ \overline{u}$ one sees that $G(vu)=G(v)G(u)$ so that $G$ preserves composition. Since $\overline{\rho(id_{N})}=0$ then $G(id_{N})=id_{\overline{N}}$ so that $G$ is indeed a covariant (additive) functor.  \\

Finally, note that $G(u)=0$ if and only if $\overline{u}+\overline{\rho(u)}$ is confined to $k$, which happens if and only if $\overline{u}$ is confined to $k$ and the latter happens if only if $u$ is confined to $k$, as was to be shown.
\end{proof}

Let $\varphi: \mathcal{F}_{S}(M) \rightarrow \mathcal{F}_{S}(M_{1})$ be an algebra isomorphism such that $\varphi_{|S}=id_{S}$. Let $P$ be a potential in $\mathcal{F}_{S}(M)$. \\ 

Throughout the rest of this section, $J(M,P)$ will denote the quotient algebra $\mathcal{F}_{S}(M)/R(P)$. \\

The isomorphism $\varphi$ induces a functor \\
 
\begin{center}
$H_{\varphi}: J(M,P)-\operatorname{mod} \rightarrow J(M_{1},\varphi(P))-\operatorname{mod}$
\end{center}

\vspace{0.1in}

given as follows. In objects, $H_{\varphi}(N)=^{\varphi}N$; that is, $H_{\varphi}(N)=N$ as $S$-left modules, and given $n \in N$ and $z \in \mathcal{F}_{S}(M_{1})$, $z \cdot n=\varphi^{-1}(z)n$. Clearly, $\operatorname{Hom}_{J(M,P)}(N,N^{1})=\operatorname{Hom}_{J(M_{1},\varphi(P))}(^{\varphi}N,^{\varphi}N^{1})$. Therefore, we let $H_{\varphi}(u)=u$. This gives an equivalence of categories 

\begin{equation} \label{6.1}
H_{\varphi}: J(M,P)-\operatorname{mod} \rightarrow J(M_{1},\varphi(P))-\operatorname{mod}
\end{equation}

Now suppose that $M=M_{1} \oplus M_{2}$, and $Q=Q_{1}+W$ where $Q_{1}$ is a reduced potential in $\mathcal{F}_{S}(M_{1})$ and $W$ is a trivial potential in $\mathcal{F}_{S}(M_{2})$. Then the restriction functor \\

\begin{equation} \label{6.2}
\operatorname{res}: J(M,Q)-\operatorname{mod} \rightarrow J(M_{1},Q_{1})-\operatorname{mod}
\end{equation}

yields also an equivalence of categories. \\

On the other hand, by \cite[Theorem 8.17]{2} there exists a right-equivalence \\

\begin{center}
$\psi: \mathcal{F}_{S}(\mu_{k}^2M) \rightarrow \mathcal{F}_{S}(M \oplus M')$
\end{center}

such that $\psi(\mu_{k}^2P)$ is cyclically equivalent to $P+W$ where $W$ is a trivial potential in $\mathcal{F}_{S}(M')$.

Thus, using \ref{6.1} and \ref{6.2} we obtain equivalence of categories:

\begin{align}
H_{\psi}: J(\mu_{k}^2M,\mu_{k}^2P)-\operatorname{mod} \rightarrow J(M \oplus M',P+W)-\operatorname{mod} \label{6.3} \\
res: J(M \oplus M',P+W)-\operatorname{mod} \rightarrow J(M,P)-\operatorname{mod} \label{6.4}
\end{align}

\vspace{0.1in}

composing the  above functors yields an equivalence of categories \\
\begin{equation} \label{6.5}
G(\psi)=resH_{\psi}: J(\mu_{k}^2M,\mu_{k}^2P)-\operatorname{mod} \rightarrow J(M,P)-\operatorname{mod}
\end{equation}

In what follows, given $N \in J(M,P)-\operatorname{mod}$, we will denote by $_{S}N$ the $S$-left module underlying $N$. In particular, $_{S}G(\psi)(N)=_{S}N$.

\begin{lemma} \label{lem11} Let $\mathcal{A},\mathcal{B}$ be $F$-categories and let $C: \mathcal{A} \rightarrow \mathcal{B}$, $D: \mathcal{B} \rightarrow \mathcal{A}$ be $F$-functors such that $D$ is faithful and there exists a natural isomorphism $id_{\mathcal{A}} \cong DC$. Then $C$ is fully faithful. Moreover, if $D$ is full, then $id_{\mathcal{B}} \cong CD$.
\end{lemma}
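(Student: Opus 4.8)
The plan is to extract everything from the given natural isomorphism, whose component at an object $X$ of $\mathcal{A}$ is an isomorphism $\eta_X\colon X \to DCX$, using only the naturality squares of $\eta$ together with the faithfulness (and, in the second part, the fullness) of $D$. Nothing beyond formal diagram chasing is needed.

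First I would prove that $C$ is faithful. Given $f,g\colon X \to Y$ in $\mathcal{A}$ with $C(f)=C(g)$, apply $D$ to get $DC(f)=DC(g)$; naturality of $\eta$ at $f$ gives $\eta_Y\circ f = DC(f)\circ\eta_X$, and likewise at $g$, so since $\eta_X,\eta_Y$ are invertible we obtain $f = \eta_Y^{-1}\circ DC(f)\circ\eta_X = \eta_Y^{-1}\circ DC(g)\circ\eta_X = g$. For fullness, given any $h\colon CX \to CY$ in $\mathcal{B}$, set $f := \eta_Y^{-1}\circ D(h)\circ\eta_X\colon X \to Y$; rearranging the naturality square of $\eta$ at $f$ shows $DC(f) = \eta_Y\circ f\circ\eta_X^{-1} = D(h)$, and since $D$ is faithful we conclude $C(f)=h$. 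Hence $C$ is fully faithful. Note this uses only that $D$ is faithful.

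For the "Moreover", assume in addition that $D$ is full, so $D$ is fully faithful and in particular reflects isomorphisms. For each object $B$ of $\mathcal{B}$, the isomorphism $\eta_{DB}^{-1}\colon DCDB \to DB$ of $\mathcal{A}$ is, by fullness of $D$, of the form $D(\varepsilon_B)$ for some $\varepsilon_B\colon CDB \to B$ in $\mathcal{B}$. Since $D(\varepsilon_B)$ is an isomorphism and $D$ reflects isomorphisms, each $\varepsilon_B$ is an isomorphism. Finally I would verify naturality: for $h\colon B \to B'$ in $\mathcal{B}$, applying $D$ to both candidate composites and invoking the naturality square of $\eta$ at the morphism $D(h)\colon DB \to DB'$ gives $D(\varepsilon_{B'}\circ CD(h)) = \eta_{DB'}^{-1}\circ DCD(h) = D(h)\circ\eta_{DB}^{-1} = D(h\circ\varepsilon_B)$, so by faithfulness of $D$ we get $\varepsilon_{B'}\circ CD(h) = h\circ\varepsilon_B$. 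Thus $\varepsilon=(\varepsilon_B)_B$ is a natural isomorphism $CD \cong id_{\mathcal{B}}$.

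The only points that genuinely require care are (i) applying naturality of $\eta$ at the correct morphism — at $f$ for fullness of $C$, and at $D(h)$ for naturality of $\varepsilon$ — and (ii) passing from "$D(\varepsilon_B)$ is invertible" to "$\varepsilon_B$ is invertible" via the fact that a fully faithful functor reflects isomorphisms. I expect step (ii) to be the one most worth stating explicitly, since that is precisely where the extra hypothesis that $D$ is full (beyond faithful) enters.
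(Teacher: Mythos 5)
Your proof is correct and follows essentially the same route as the paper's: faithfulness and fullness of $C$ via the naturality squares of the given isomorphism, and construction of the components of $CD \cong id_{\mathcal{B}}$ by lifting $\eta_{DB}^{\pm 1}$ through the full functor $D$, with naturality checked at $D(h)$ and transferred back via faithfulness of $D$. Your explicit remark that the lifted components are isomorphisms because a fully faithful functor reflects isomorphisms is a point the paper's proof leaves implicit, so it is a welcome addition rather than a deviation.
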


\begin{proof} For each $X \in \operatorname{Ob}(\mathcal{A})$ there exists an isomorphism
\begin{center}
$\phi_{X}: X \rightarrow DC(X)$
\end{center}

Now let $u \in \operatorname{Hom}_{\mathcal{A}}(X,X_{1})$. By naturality, we have a commutative diagram
\begin{center}
\begin{equation*}
\xymatrix{%
X \ar[r]^{\phi_{X}} \ar[d]^{u} & DC(X) \ar[d]^{DC(u)} \\
X_{1} \ar[r]^{\phi_{X_{1}}} & DC(X_{1})}
\end{equation*}
\end{center}
Thus $\phi_{X_{1}}u=DC(u)\phi_{X}$. Therefore if $C(u)=0$, then $u=0$. This shows that $C$ is faithful. 
Now let $h \in \operatorname{Hom}_{\mathcal{B}}(C(X),C(X_{1}))$. Let
\begin{center}
$\lambda=\phi_{X_{1}}^{-1}D(h)\phi_{X}: X \rightarrow X_{1}$
\end{center}

Since the following diagram commutes

\begin{center}
\begin{equation*}
\xymatrix{%
X \ar[r]^{\phi_{X}} \ar[d]^{\lambda} & DC(X) \ar[d]^{DC(\lambda)} \\
X_{1} \ar[r]^{\phi_{X_{1}}} & DC(X_{1})}
\end{equation*}
\end{center}

then $\lambda=\phi_{X_{1}}^{-1}DC(\lambda)\phi_{X}$. Therefore $D(h)=DC(\lambda)$ and thus $h=C(\lambda)$. It follows that $C$ is full.  
Now suppose that $D$ is full, then for each $Y \in \operatorname{Ob}(\mathcal{B})$ there exists an isomorphism $\phi_{D(Y)}: D(Y) \rightarrow DCD(Y)$. Since $D$ is full, then $\phi_{D(Y)}=D(\psi_{Y})$ for some $\psi_{Y} \in \operatorname{Hom}_{\mathcal{B}}(Y,CD(Y))$. Let us show that $\psi_{Y}$ is natural. Let $f \in \operatorname{Hom}_{\mathcal{B}}(Y_{1},Y_{2})$. We have to prove the following diagram is commutative

\begin{center}
\begin{equation} \label{6.6}
\xymatrix{%
Y \ar[r]^{\psi_{Y}} \ar[d]^{f} & CD(Y) \ar[d]^{CD(f)} \\
Y_{1} \ar[r]^{\psi_{Y_{1}}} & CD(Y_{1})}
\end{equation}
\end{center}
 
Consider the following commutative diagram
\begin{center}
\begin{equation*}
\xymatrix{%
D(Y) \ar[r]^{\phi_{D(Y)}} \ar[d]^{D(f)} & DCD(Y) \ar[d]^{DCD(f)} \\
D(Y_{1}) \ar[r]^{\phi_{D(Y_{1})}} & DCD(Y_{1})}
\end{equation*}
\end{center}

thus $DCD(f)\phi_{D(Y)}=\phi_{D(Y_{1})}D(f)$. This implies that $DCD(f)D(\psi_{Y})=D(\psi_{Y_{1}})D(f)$. Because $D$ is faithful it follows that $CD(f)\psi_{Y}=\psi_{Y_{1}}f$ and \ref{6.6} commutes, as was to be shown.

\end{proof}

By \ref{6.5} there exists an equivalence of categories \\
\begin{center}
$G(\psi)=resH_{\psi}: J(\mu_{k}^2M,\mu_{k}^2P)-\operatorname{mod} \rightarrow J(M,P)-\operatorname{mod}$
\end{center}

and this functor descends to a functor $G(\psi)_{k}$ in the quotient category $J(M,P)-\operatorname{mod}_{k}$ which is the category $J(M,P)-\operatorname{mod}$, modulo the ideal of morphisms which factor through direct sums of the simple module at $k$. Thus, we have a functor \\

\begin{center}
$G(\psi)_{k}: J(\mu_{k}^{2}M,\mu_{k}^{2}P)-\operatorname{mod}_{k} \rightarrow J(M,P)-\operatorname{mod}_{k}$
\end{center}

\begin{prop} \label{prop9} There exists a natural isomorphism of functors $id_{J(M,P)-\operatorname{mod}_{k}} \cong G(\psi)_{k}\tilde{\mu}_{k}^{2}$.
\end{prop}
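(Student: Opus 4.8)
The plan is, for each finite dimensional left $J(M,P)$-module $N$, to produce an isomorphism $\eta_N\colon N\to G(\psi)_k\widetilde{\mu}_k^{2}(N)$ out of the constructions already carried out in the proof of Theorem \ref{theo2}, and then to show that the family $(\eta_N)_N$ is natural \emph{after passing to the quotient category} $J(M,P)-\operatorname{mod}_k$. In other words, Proposition \ref{prop9} is the functorial upgrade of Theorem \ref{theo2}, so the real content is the verification of naturality, not a fresh object-by-object isomorphism.

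First I would identify the objects involved. Unwinding Definition \ref{def13} and \ref{6.5}, the module $G(\psi)_k\widetilde{\mu}_k^{2}(N)$ has underlying $S$-module the second premutation $\overline{\overline{N}}$ (built with the splitting data fixed in the definition of the functors $\widetilde{\mu}_k$, see Proposition \ref{prop8}), and its $\mathcal{F}_S(M)$-action is obtained from the $\mathcal{F}_S(\mu_k^{2}M)$-action by twisting with the right-equivalence $\psi$ of \cite[Theorem 8.17]{2} and restricting to $\mathcal{F}_S(M)$; since $\psi$ restricts to the automorphism $\psi_0$ of $\mathcal{F}_S(M)$ described in the proof of Theorem \ref{theo2}, this is the action $u\cdot n=\psi_0^{-1}(u)\,n$. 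But this is exactly the module realization of $\widetilde{\mu}_k^{2}(\mathcal{N})_{red}$ used at the end of the proof of Theorem \ref{theo2}: there, from the equalities \ref{5.10} together with the isomorphisms $\theta_1,\dots,\theta_4$ and the block map $f$, one builds an explicit linear isomorphism $\psi'\colon\overline{\overline{N}}\to N$ and shows that it intertwines the two module structures, so $\psi'$ is an isomorphism of left $J(M,P)$-modules. I set $\eta_N:=(\psi')^{-1}$.

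Next I would prove naturality modulo $k$. Two facts from the earlier arguments are decisive. First, in the proof of Theorem \ref{theo2} the map $\psi'$, hence $\eta_N$, is the identity on the component $N_i$ for every $i\ne k$. Second, in the proof of Proposition \ref{prop8} the premutation of a morphism $v$ is represented by $\overline{v}+\overline{\rho(v)}$ with $\overline{\rho(v)}$ confined to $k$, so its $i$-th component for $i\ne k$ is $v_i$; since $\operatorname{res}H_\psi$ leaves underlying $S$-module maps and $\operatorname{Hom}$-sets untouched, iterating shows that $G(\psi)_k\widetilde{\mu}_k^{2}(u)$ has $i$-th component $u_i$ for all $i\ne k$, for every morphism $u\colon N\to N^{1}$ of $J(M,P)$-modules. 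Combining these, both composites $\eta_{N^{1}}\circ u$ and $G(\psi)_k\widetilde{\mu}_k^{2}(u)\circ\eta_N$ equal $u_i$ on each component $i\ne k$, so their difference vanishes off $k$, i.e.\ is confined to $k$. By the remark accompanying the definition of a morphism confined to $k$, such a morphism factors through a direct sum of the simple module at $k$ and is therefore zero in $J(M,P)-\operatorname{mod}_k$. Hence the naturality square for $\eta$ commutes in $J(M,P)-\operatorname{mod}_k$, and since each $\eta_N$ is invertible there, $\eta$ is the desired natural isomorphism $id_{J(M,P)-\operatorname{mod}_k}\cong G(\psi)_k\widetilde{\mu}_k^{2}$.

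The step I expect to be the genuine obstacle is not the naturality chase—which, as above, is forced once one knows that $\eta_N$ and $G(\psi)_k\widetilde{\mu}_k^{2}(u)$ are ``the identity away from $k$''—but the bookkeeping in the object-level identification: one must check carefully that the twist produced by $\operatorname{res}H_\psi$ is precisely the $\psi_0^{-1}$-twist used in Theorem \ref{theo2}, and hence that the $F$-linear isomorphism $\psi'$ constructed there is an honest morphism of $J(M,P)$-modules rather than merely an intertwiner relative to some auxiliary algebra automorphism. Concretely this means tracking the right-equivalence $\psi$ of \cite[Theorem 8.17]{2} through $H_\psi$ and through the reduction functor and matching it with the restriction $\psi_0$ that governs the action on $\overline{\overline{N}}$ in the proof of Theorem \ref{theo2}.
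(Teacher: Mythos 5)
Your proposal matches the paper's proof: the paper also takes the object-level isomorphisms $\psi'$ from the proof of Theorem \ref{theo2} as the components of the natural transformation and disposes of naturality by observing that $u\psi'-\psi_{1}'u_{2}$ is confined to $k$, hence zero in the quotient category. Your write-up in fact spells out more explicitly than the paper why that difference is confined to $k$ (both composites agree with $u_{i}$ on every component $i\neq k$), but the route is the same.
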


\begin{proof} Let $u \in \operatorname{Hom}_{J(M,P)-\operatorname{mod}_{k}}(N,N^{1})$. Remembering the proof of Proposition \ref{prop8} we have
\begin{align*}
\tilde{\mu}_{k}(u)&=\underline{\overline{u}+\overline{\rho(u)}}=u_{1}: \overline{N} \rightarrow \overline{N^{1}} \\
\tilde{\mu}_{k}^{2}(u)=\tilde{\mu}_{k}(u_{1})&=\underline{\overline{u_{1}}+\overline{\rho(u_{1})}}=u_{2}: \overline{\overline{N}} \rightarrow \overline{\overline{N^{1}}}
\end{align*}
Using the notation introduced in the proof of Theorem \ref{theo2}, we have isomorphisms of $J(\mu_{k}^{2}M,\mu_{k}^{2}P)$-left modules
\begin{align*}
&\psi': G(\psi)_{k}\tilde{\mu}_{k}^{2}N \rightarrow N \\
&\psi_{1}': G(\psi)_{k}\tilde{\mu}_{k}^{2} N^{1} \rightarrow N^{1}
\end{align*}

It remains to show that the following diagram commutes in $J(M,P)-\operatorname{mod}_{k}$.
\begin{center}
\begin{equation} \label{6.7}
\xymatrix{%
G(\psi)_{k}\tilde{\mu}_{k}^{2}N \ar[r]^(.65){\psi'} \ar[d]^{u_{2}} & N \ar[d]^{u} \\
G(\psi)_{k}\tilde{\mu}_{k}^{2}N \ar[r]^(.65){\psi_{1}'} & N^{1}}
\end{equation}
\end{center}

but this is true since $u\psi'-\psi_{1}'u_{2}$ is confined to $k$. This completes the proof.
\end{proof}

\begin{prop} \label{prop10} There exists a natural isomorphism of functors \\

\begin{center}
$\tilde{\mu}_{k}G(\psi)_{k}\tilde{\mu}_{k} \cong id_{J(\mu_{k}M,\mu_{k}P)-\operatorname{mod}_{k}}$.
\end{center}
\end{prop}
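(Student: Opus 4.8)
The plan is to obtain this by feeding Proposition \ref{prop9} into Lemma \ref{lem11}, exactly as one would for a pair of quasi-inverse equivalences. Write $\mathcal{A}=J(M,P)-\operatorname{mod}_{k}$ and $\mathcal{B}=J(\mu_{k}M,\mu_{k}P)-\operatorname{mod}_{k}$, and set $C=\tilde{\mu}_{k}\colon\mathcal{A}\to\mathcal{B}$ and $D=G(\psi)_{k}\tilde{\mu}_{k}\colon\mathcal{B}\to\mathcal{A}$, where in the definition of $D$ the functor $\tilde{\mu}_{k}$ goes $\mathcal{B}\to J(\mu_{k}^{2}M,\mu_{k}^{2}P)-\operatorname{mod}_{k}$ and then $G(\psi)_{k}$ lands back in $\mathcal{A}$. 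Since $\tilde{\mu}_{k}^{2}=\tilde{\mu}_{k}\circ\tilde{\mu}_{k}$ we have $DC=G(\psi)_{k}\tilde{\mu}_{k}^{2}$, so Proposition \ref{prop9} furnishes a natural isomorphism $id_{\mathcal{A}}\cong DC$; on the other hand $CD=\tilde{\mu}_{k}G(\psi)_{k}\tilde{\mu}_{k}$ is precisely the composite appearing in the statement. It therefore suffices to arrange the hypotheses of the last assertion of Lemma \ref{lem11}, namely that $D$ is faithful and full.

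First I would verify that $D$ is faithful. The functor $\tilde{\mu}_{k}$ is faithful by Proposition \ref{prop8}. For $G(\psi)_{k}$, recall that $G(\psi)=\operatorname{res}H_{\psi}$ is the composite of the two equivalences \ref{6.3} and \ref{6.4}, hence an equivalence $J(\mu_{k}^{2}M,\mu_{k}^{2}P)-\operatorname{mod}\to J(M,P)-\operatorname{mod}$. Because $\psi$ is a right-equivalence, it restricts to the identity on $S$, so both $H_{\psi}$ and the restriction functor leave the underlying $S$-module unchanged; hence $G(\psi)$ sends the simple module at $k$ to the simple module at $k$, and therefore carries the ideal of morphisms factoring through direct sums of that simple onto the corresponding ideal. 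Consequently $G(\psi)$ descends to an equivalence $G(\psi)_{k}$ of the quotient categories, in particular a faithful functor, and $D=G(\psi)_{k}\tilde{\mu}_{k}$ is faithful as a composite of faithful functors. Applying the first part of Lemma \ref{lem11} to $(C,D)$ together with the isomorphism $id_{\mathcal{A}}\cong DC$ then shows that $C=\tilde{\mu}_{k}$ is fully faithful. Therefore $D$ is the composite of the fully faithful functor $\tilde{\mu}_{k}$ with the equivalence $G(\psi)_{k}$, so $D$ is fully faithful; in particular $D$ is full, and the last assertion of Lemma \ref{lem11} produces a natural isomorphism $id_{\mathcal{B}}\cong CD=\tilde{\mu}_{k}G(\psi)_{k}\tilde{\mu}_{k}$, which is the claim.

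The point that I expect to need the most attention is the descent of $G(\psi)$ to the categories $-\operatorname{mod}_{k}$: one must argue that the equivalence $\operatorname{res}H_{\psi}$ respects the ideal of morphisms that factor through direct sums of the simple module at $k$. This is where the hypothesis that $\psi$ is a right-equivalence (hence $\psi|_{S}=id_{S}$) enters, since it is what pins down the image of the simple at $k$ under the equivalence; once this is granted, the ideal is preserved and everything else is a purely formal manipulation of Lemma \ref{lem11} and Proposition \ref{prop9}, with no computation required.
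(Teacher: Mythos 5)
Your proof is correct and follows essentially the same route as the paper: both feed Proposition \ref{prop9} into Lemma \ref{lem11} with $C=\tilde{\mu}_{k}$ and $D=G(\psi)_{k}\tilde{\mu}_{k}$, check that $D$ is faithful and full, and invoke the last assertion of the lemma. The one point to watch (present in the paper's own argument as well) is that the copy of $\tilde{\mu}_{k}$ sitting inside $D$ is the one based at $(\mu_{k}M,\mu_{k}P)$, i.e.\ $J(\mu_{k}M,\mu_{k}P)-\operatorname{mod}_{k}\rightarrow J(\mu_{k}^{2}M,\mu_{k}^{2}P)-\operatorname{mod}_{k}$, so its fullness is obtained by rerunning the Proposition \ref{prop9}/Lemma \ref{lem11} argument with $(M,P)$ replaced by $(\mu_{k}M,\mu_{k}P)$, not by citing the fully faithfulness of $C\colon J(M,P)-\operatorname{mod}_{k}\rightarrow J(\mu_{k}M,\mu_{k}P)-\operatorname{mod}_{k}$, which is a different instance of the functor.
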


\begin{proof} By Proposition \ref{prop8}, the functor

\begin{center}
$\tilde{\mu}_{k}: J(\mu_{k}M,\mu_{k}P)-\operatorname{mod}_{k} \rightarrow J(\mu_{k}^{2}M,\mu_{k}^{2}P)-\operatorname{mod}_{k}$
\end{center}

is faithful, hence $G(\psi)_{k}\tilde{\mu}_{k}$ is faithful as well. By Lemma \ref{lem11} and Proposition \ref{prop8}, the functor $\tilde{\mu}_{k}$ is fully faithful. Therefore, $G(\psi)_{k}\tilde{\mu}_{k}$ is full. The result now follows by applying Lemma \ref{lem11}. 
\end{proof}

\begin{theorem} Let $P$ be a potential in $\mathcal{F}_{S}(M)$. If $\mu_{k}P$ is splittable, then there exists an equivalence of categories:

\begin{center}
$\mu_{k}: J(M,P)-\operatorname{mod}_{k} \rightarrow J(\overline{\mu}_{k}M,\overline{\mu}_{k}P)-\operatorname{mod}_{k}$
\end{center}

\end{theorem}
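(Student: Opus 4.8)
The plan is to assemble the equivalence from the functors and natural isomorphisms that have already been set up. First I would observe that since $\mu_k P$ is splittable, the reduced mutation $\overline{\mu}_k$ coincides (up to a right-equivalence of algebras with potential) with $\widetilde{\mu}_k$ followed by passing to the reduced part, and that by the equivalences \ref{6.1} and \ref{6.2} the Jacobian algebra $J(\overline{\mu}_k M,\overline{\mu}_k P)$ has module category equivalent to $J(\mu_k M,\mu_k P)\text{-}\operatorname{mod}$, compatibly with the $\operatorname{mod}_k$ quotient since all these equivalences are built from restriction and twisting by $S$-linear isomorphisms, hence send the simple at $k$ to the simple at $k$. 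So it suffices to produce an equivalence $J(M,P)\text{-}\operatorname{mod}_k \rightarrow J(\mu_k M,\mu_k P)\text{-}\operatorname{mod}_k$.

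Next I would take the candidate functor to be $\widetilde{\mu}_k$ from Proposition \ref{prop8}, which is already known to be faithful. The point is to show it is an equivalence. By Proposition \ref{prop10} there is a natural isomorphism $\widetilde{\mu}_k\, G(\psi)_k\, \widetilde{\mu}_k \cong \operatorname{id}$ on $J(\mu_k M,\mu_k P)\text{-}\operatorname{mod}_k$, and by Proposition \ref{prop9} there is a natural isomorphism $\operatorname{id} \cong G(\psi)_k\, \widetilde{\mu}_k^2$ on $J(M,P)\text{-}\operatorname{mod}_k$. Setting $D = G(\psi)_k\,\widetilde{\mu}_k$ and $C = \widetilde{\mu}_k$, the first isomorphism reads $CD \cong \operatorname{id}$, while rewriting the second gives $\operatorname{id}\cong G(\psi)_k\widetilde{\mu}_k\cdot\widetilde{\mu}_k = D C$. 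Since $\widetilde{\mu}_k$ is faithful and $G(\psi)_k$ is an equivalence, $D$ is faithful; and by Lemma \ref{lem11} applied with the natural isomorphism $\operatorname{id}\cong DC$, the functor $C=\widetilde{\mu}_k$ is fully faithful, and combined with $CD\cong\operatorname{id}$ this makes $\widetilde{\mu}_k$ dense as well. Hence $\widetilde{\mu}_k$ is an equivalence of categories $J(M,P)\text{-}\operatorname{mod}_k \to J(\mu_k M,\mu_k P)\text{-}\operatorname{mod}_k$.

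Finally I would splice in the identification from the first paragraph: compose $\widetilde{\mu}_k$ with the equivalence $J(\mu_k M,\mu_k P)\text{-}\operatorname{mod}_k \to J(\overline{\mu}_k M,\overline{\mu}_k P)\text{-}\operatorname{mod}_k$ induced by \ref{6.1} and \ref{6.2} (using that $\mu_k P$ is splittable so that $\mu_k M = \overline{\mu}_k M \oplus \Xi_2(\mu_k P)$ with a trivial potential summand, exactly as in the definition of $\mathcal{N}_{red}$), and rename the composite $\mu_k$. This is the desired equivalence.

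The main obstacle I expect is the bookkeeping in the first paragraph: making precise that the category-level mutation descends correctly to the $\operatorname{mod}_k$ quotients and that all the intermediate equivalences ($H_\varphi$, $\operatorname{res}$, $G(\psi)_k$, $\widetilde{\mu}_k$) are mutually compatible with the ideal of morphisms factoring through sums of the simple at $k$ — i.e. that each of them carries that ideal to the corresponding ideal. The formal extraction of the equivalence from Lemma \ref{lem11} and Propositions \ref{prop8}, \ref{prop9}, \ref{prop10} is then essentially automatic.
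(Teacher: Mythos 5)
Your proposal is correct and follows essentially the same route as the paper: the paper likewise combines Propositions \ref{prop9} and \ref{prop10} to obtain the equivalence between $J(M,P)\text{-}\operatorname{mod}_{k}$ and $J(\mu_{k}M,\mu_{k}P)\text{-}\operatorname{mod}_{k}$, and then uses the splittability of $\mu_{k}P$ together with \ref{6.1} and \ref{6.2} to pass to $J(\overline{\mu}_{k}M,\overline{\mu}_{k}P)\text{-}\operatorname{mod}_{k}$. Your extra detour through Lemma \ref{lem11} to extract full faithfulness and density of $\widetilde{\mu}_{k}$ is harmless (indeed the two natural isomorphisms $DC\cong\operatorname{id}$ and $CD\cong\operatorname{id}$ already exhibit quasi-inverse equivalences), and the compatibility bookkeeping you flag is exactly what the paper handles when it notes that $G(\psi)$ descends to the quotient categories.
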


\begin{proof}
Since $\mu_{k}P$ is splittable, then by \cite[Theorem 7.15]{2} there exists an algebra isomorphism $\varphi: \mathcal{F}_{S}(\mu_{k}M) \rightarrow \mathcal{F}_{S}(\overline{\mu}_{k}M \oplus M')$, with $\varphi_{|S}=id_{S}$, and such that $\varphi(\mu_{k}P)$ is cyclically equivalent to $\overline{\mu}_{k}P + W$ where $W$ is a trivial potential in $\mathcal{F}_{S}(M')$. By \ref{6.1} and \ref{6.2} there exists an equivalence of categories
\begin{center}
$G(\varphi): J(\mu_{k}M,\mu_{k}P)-\operatorname{mod} \rightarrow J(\overline{\mu}_{k}M,\overline{\mu}_{k}P)-\operatorname{mod}$
\end{center}

which induces an equivalence of categories
\begin{center}
$G(\varphi)_{k}: J(\mu_{k}M,\mu_{k}P)-\operatorname{mod}_{k} \rightarrow J(\overline{\mu}_{k}M,\overline{\mu}_{k}P)-\operatorname{mod}_{k}$
\end{center}

By Propositions \ref{prop9} and \ref{prop10}, the categories $J(M,P)-\operatorname{mod}_{k}$ and $J(\mu_{k}M,\mu_{k}P)-\operatorname{mod}_{k}$ are equivalent; hence, we get an equivalence of categories

\begin{center}
$\mu_{k}: J(M,P)-\operatorname{mod}_{k} \rightarrow J(\overline{\mu}_{k}M,\overline{\mu}_{k}P)-\operatorname{mod}_{k}$
\end{center}

as desired.

\end{proof}


\begin{thebibliography}{8}

\bibitem{1} M. Auslander, I. Reiten and S. Smal\o{}, \textit{Representation theory of Artin algebras}. Cambridge Studies in Advanced Mathematics, 36. Cambridge University Press, Cambridge, (1997).

\bibitem{2} R. Bautista and D. L\'{o}pez-Aguayo, \textit{Potentials for some tensor algebras}. \href{https://arxiv.org/abs/1506.05880}{arXiv:1506.05880}.

\bibitem{3} R. Bautista, L. Salmer\'{o}n and R. Zuazua, \textit{Differential tensor algebras and their module categories}. London Mathematical Society Lecture Note Series, 362. Cambridge University Press, Cambridge, (2009).

\bibitem{4} A.B. Buan, O. Iyama, I. Reiten and D. Smith, \textit{Mutation of cluster-tilting objects and potentials}. American Journal of Mathematics, 133 (2011), no. 4, 835-887. \href{http://arxiv.org/abs/0804.3813}{arXiv:0804.3813}.

\bibitem{5} L. Demonet, \textit{Mutations of group species with potentials and their representations. Applications to cluster algebras}. \href{http://arxiv.org/abs/1003.5078}{arXiv:1003.5078}.

\bibitem{6} H. Derksen, J. Weyman, A. Zelevinsky. \textit{Quivers with potentials and their representations I: Mutations}. Selecta Math. \textbf{14} (2008), no. 1, 59-119. \href{https://arxiv.org/abs/0704.0649}{arXiv:0704.0649}.

\bibitem{7} H. Derksen, J. Weyman, A. Zelevinsky. \textit{Quivers with potentials and their representations II: Applications to cluster algebras}. J. Amer. Math. Soc.  \textbf{23} (2010), no. 3, 749-790. \href{https://arxiv.org/abs/0904.0676}{arXiv:0904.0676}.

\bibitem{8} D. Labardini-Fragoso, A. Zelevinsky. \textit{Strongly primitive species with potentials I: Mutations}. Bolet\'{i}n de la Sociedad Matem\'{a}tica Mexicana (Third series), Vol. \textbf{22}, (2016), Issue 1, 47-115. \url{http://arxiv.org/pdf/1306.3495v1.pdf}

\end{thebibliography}
\end{document}